\newcommand{\R}{\mathbb{R}}
\newcommand{\N}{\mathbb{N}}
\newcommand{\eps}{\varepsilon}
\newcommand{\fhi}{\varphi}
\newcommand{\weak}{\rightharpoonup}
\newcommand{\weakstar}{\stackrel{\ast}{\rightharpoonup}}
\renewcommand{\div}{\mathrm{div}}
\newcommand{\supp}{\mathrm{supp}}
\def\calG{\mathcal{G}}
\def\calO{\mathcal{O}}
\def\calH{\mathcal{H}}
\def\calT{\mathcal{T}}
\newtheorem{definition}{Definition}[section] 
\newtheorem{remark}{Remark}[section]    
\newtheorem{theorem}{Theorem}[section]    
\newtheorem{lemma}{Lemma}[section]
\def\XXint#1#2#3{{\setbox0=\hbox{$#1{#2#3}{\int}$}
     \vcenter{\hbox{$#2#3$}}\kern-.5\wd0}}
\numberwithin{equation}{section}
\begin{document}

\title[Approximation of Nondivergence-Form Homogenization Problems]{Finite Element Approximation of Elliptic Homogenization Problems in Nondivergence-Form}

\author[Y. Capdeboscq]{Yves Capdeboscq$^{\ast}$}
\address[*]{Universit\'e de Paris,  CNRS, Sorbonne Universit\'e, Laboratoire Jacques-Louis Lions UMR7598, Paris, France}
\email[Y. Capdeboscq]{yves.capdeboscq@sorbonne-universite.fr}

\author[T. Sprekeler]{Timo Sprekeler$^{\dagger}$}
\address[$\dagger$]{University of Oxford, Mathematical Institute, Woodstock Road, Oxford OX2 6GG, UK. }
\email[T. Sprekeler]{timo.sprekeler@maths.ox.ac.uk}

\author[E. Süli]{Endre Süli$^{\ddagger}$}
\address[$\ddagger$]{University of Oxford, Mathematical Institute, Woodstock Road, Oxford OX2 6GG, UK. }
\email[E. Süli]{endre.suli@maths.ox.ac.uk}

\subjclass[2010]{35B27, 35J15, 65N12, 65N30}
\keywords{Homogenization, nondivergence-form elliptic PDE, finite element methods}
\date{May 28, 2019}

\begin{abstract}
We use uniform $W^{2,p}$ estimates to obtain corrector results for periodic homogenization problems of the form $A(x/\eps):D^2 u_{\eps} = f$ subject to a homogeneous Dirichlet boundary condition. We propose and rigorously analyze a numerical scheme based on finite element approximations for such nondivergence-form homogenization problems. The second part of the paper focuses on the approximation of the corrector and numerical homogenization for the case of nonuniformly oscillating coefficients. Numerical experiments demonstrate the performance of the scheme.
\end{abstract}

\maketitle

\tableofcontents

\section{Introduction}

In this work we consider second-order elliptic equations of nondivergence structure, involving rapidly oscillating coefficients, of the form
\begin{align}\label{Intro1}
A\left(\frac{\cdot}{\eps}\right):D^2 u_{\eps}:=\sum_{i,j=1}^n a_{ij}\left(\frac{\cdot}{\eps}\right)\partial_{ij}^2 u_{\eps}&=f\quad\text{in }\Omega,
\end{align}
subject to the homogeneous Dirichlet boundary condition
\begin{align}\label{Intro2}
u_{\eps}=0\quad\text{on }\partial\Omega.
\end{align}
Here we assume that $\Omega\subset \R^n$ is a sufficiently regular bounded domain, $\eps>0$ is small, and that $A=(a_{ij}):\R^n\rightarrow \R^{n\times n}$ is a symmetric, uniformly elliptic and $(0,1)^n$-periodic matrix-valued function such that
\begin{align*}
A\in W^{1,q}(Y)\text{ for some }q>n,
\end{align*}
where $Y:=(0,1)^n$ denotes the unit cell, see \eqref{Aass}. The main goal of this paper is to propose and analyze a numerical homogenization scheme  for \eqref{Intro1}, \eqref{Intro2} that is based on finite element approximations.

The theory of periodic homogenization is concerned with the limiting behavior of the solutions as the oscillation parameter $\eps$ tends to zero. For the problem \eqref{Intro1}, \eqref{Intro2} under consideration a classical homogenization theorem (see \cite[Sec. 3, Theorem 5.2]{BLP11}) states that the solution sequence $(u_{\eps})_{\eps>0}$ converges in an appropriate Sobolev space to the solution $u_0$ to the problem 
\begin{align}\label{Intro25}
\left\{ \begin{aligned}A^0:D^2 u_0 &= f\quad \text{in }\Omega,\\ \hfill u_0&= 0\quad \text{on }\partial\Omega.\end{aligned}\right.
\end{align} 
Here $A^0\in \R^{n\times n}$ is the constant matrix given by
\begin{align}\label{Intro26}
A^0=\int_Y Am,
\end{align}
and $m:\R^n\rightarrow \R$ is the invariant measure, i.e. the solution to the problem
\begin{align*}
\begin{cases}
D^2:(Am)=0\quad\text{in }Y,\\
m\;\text{is $Y$-periodic},\; \int_Y m = 1;
\end{cases}
\end{align*}
see Section 2 for further details. The task of numerical homogenization is the numerical approximation of the matrix $A^0$ and the solution $u_0$ to the homogenized problem \eqref{Intro25}. As it turns out, $u_0$ provides a good approximation to $u_{\eps}$ in $H^1(\Omega)$, and by adding corrector terms it is possible to obtain an $H^2(\Omega)$-norm approximation. Note that the approximation of \eqref{Intro1}, \eqref{Intro2} by a standard $H^2(\Omega)$-conforming finite element method does not yield error bounds independent of $\eps$, since for $s>0$ one has that
\begin{align*}
\|u_{\eps}\|_{H^{2+s}(\Omega)}=\mathcal{O}\left(\eps^{-s}\right).
\end{align*}

The motivation for investigating second-order elliptic problems in nondivergence-form comes from physics, engineering, as well as mathematical areas such as stochastic analysis. A notable example of a nonlinear PDE of nondivergence structure is the Hamilton--Jacobi--Bellman equation, which arises in stochastic control theory. The asymptotic behavior of PDEs with rapidly oscillating coefficients is also of importance when micro-inhomogeneous media are investigated.

Over the past decades significant work has been done on periodic homogenization of elliptic problems in divergence-form; numerical homogenization for nondivergence-form problems is however less developed.

The theory of homogenization of divergence-form problems such as
\begin{align}\label{Intro4}
\nabla\cdot \left(A\left(\frac{\cdot}{\eps}\right)\nabla u_{\eps}\right) + b\left(\frac{\cdot}{\eps}\right)\cdot \nabla u_{\eps} = f\quad\text{in }\Omega
\end{align}
with periodic and sufficiently regular $A:\R^n\rightarrow \R^{n\times n}$ and $b:\R^n\rightarrow \R^n$ is extensively covered in the books \cite{All02,BLP11,CD99,Tar09}. For divergence-form problems, various multiscale finite element methods (MsFEM) have been developed, which have the advantage over classical finite element methods of providing accurate approximations for very small values of $\eps$ even for moderate values of the grid size. The book \cite{EH09} by Efendiev and Hou contains a detailed overview of these methods.

It is important to note that although, if $A$ is sufficiently smooth, equation \eqref{Intro1} can be rewritten in divergence-form,  
\begin{align}\label{Intro3}
\nabla\cdot \left(A\left(\frac{\cdot}{\eps}\right)\nabla u_{\eps}\right) - \frac{1}{\eps}\left(\div A\right)\left(\frac{\cdot}{\eps}\right)\cdot \nabla u_{\eps} = f\quad\text{in }\Omega,
\end{align}
this equation does not fit into the framework of divergence-form homogenization problems such as \eqref{Intro4}, because of the $\eps^{-1}$ term in front of the first-order term in \eqref{Intro3}.

For the theory of homogenization of nondivergence-form problems such as \eqref{Intro1} we refer to the monograph \cite{BLP11} by Bensoussan, Lions and Papanicolaou, to the paper \cite{AL89} by Avellaneda and Lin, and the references therein. In \cite{BBM86}, Bensoussan, Boccardo and Murat study the more general problem involving a Hamiltonian with quadratic growth. Numerical homogenization for nondivergence-form problems using finite difference schemes has been considered in \cite{FO09} by Froese and Oberman.

The first step in the development of the proposed numerical homogenization scheme is the construction of a finite element method to obtain approximations $(m_h)_{h>0}\subset H^1_{\text{per}}(Y)$ to the invariant measure with optimal order convergence rate
\begin{align*}
\|m-m_h\|_{L^2(Y)}+h\|m-m_h\|_{H^1(Y)}\lesssim h \inf_{\tilde{v}_h\in  \tilde{M}_h} \|m-(\tilde{v}_h+1)\|_{H^1(Y)},
\end{align*}
where $\tilde{M}_h$ denotes the finite-dimensional subspace of $H^1_{\text{per}}(Y)$ consisting of continuous $Y$-periodic piecewise linear functions on the triangulation with zero mean over $Y$; see Theorem 3.1. 

Throughout this work, we use the notation $a\lesssim b$ for $a,b\in \R$ to denote that $a\leq Cb$ for some constant $C>0$ that does not depend on $\eps$ and the discretization parameters.

The second step is to obtain approximations $(A^0_h)_{h>0}\subset \R^{n\times n}$ to the constant matrix $A^0$; see Lemma 3.1. To this end, the integrand in \eqref{Intro26} is replaced by its continuous piecewise linear interpolant and the invariant measure $m$ is replaced by the approximation $m_h$, i.e.,
\begin{align*}
A^0_h:=\int_Y \mathcal{I}_h(Am_h),
\end{align*}
which can be computed exactly using an appropriate quadrature rule. 

The third step is to perform an $H^s(\Omega)$-conforming ($s\in\{1,2\}$) finite element approximation for the problem
\begin{align*}
\left\{\begin{aligned}
A^0_h:D^2 u_0^h &=f\quad\text{in }\Omega,\\
\hfill u_0^h &=0\quad \text{on }\partial\Omega,
\end{aligned}\right.
\end{align*}
on a family of triangulations of the computational domain $\Omega$, parametrized by a discretization parameter $k>0$, measuring the granularity of the triangulation, to obtain $(u_0^{h,k})_{h,k>0}\subset H^s(\Omega)\cap H^1_0(\Omega)$ with 
\begin{align*}
\left\|u_0^h-u_0^{h,k}\right\|_{H^s(\Omega)}\lesssim k\|f\|_{H^{s-1}(\Omega)},
\end{align*}
where the constant is independent of $h$; see Lemma 3.3. Note that for the sake of approximating $u_0$, an $H^1(\Omega)$-conforming finite element method is sufficient.  

The approximation $(u_0^{h,k})_{h,k>0}\subset H^s(\Omega)\cap H^1_0(\Omega)$ obtained by this procedure approximates $u_0$, i.e., the solution to \eqref{Intro25}, with convergence rate
\begin{align*}
\left\|u_0-u_0^{h,k}\right\|_{H^s(\Omega)}\lesssim (h+k)\|f\|_{H^{s-1}(\Omega)},
\end{align*}
which can be improved to $\mathcal{O}(h^2+k)$ for more regular $A$; see Theorem 3.2, Theorem 3.3 and Remark 3.3. 

Concerning the approximation of $u_{\eps}$, i.e., the solution to \eqref{Intro1}, \eqref{Intro2}, we show in Section 2 that under certain assumptions on the domain and the right-hand side, one has that 
\begin{align*}
\left\|u_{\eps}-u_0-\eps^2\sum_{i,j=1}^n \chi_{ij}\left(\frac{\cdot}{\eps}\right)\partial^2_{ij} u_0\right\|_{H^2(\Omega)}\lesssim \sqrt{\eps}\|u_0\|_{W^{2,\infty}(\Omega)} + \eps\|u_0\|_{H^4(\Omega)},
\end{align*}
where the corrector functions $\chi_{ij}:\R^n\rightarrow\R$, $i,j=1,\dots,n$, are defined as the solutions to 
\begin{align*}
\begin{cases}
A:D^2 \chi_{ij} = a_{ij}^0-a_{ij}\quad\text{in }Y,\\
\chi_{ij}\;\text{is $Y$-periodic},\;\int_Y \chi_{ij}=0.
\end{cases}
\end{align*}
This provides us with the estimate 
\begin{align*}
\|u_{\eps}-u_0\|_{H^1(\Omega)}+\sum_{k,l=1}^n \left\|\partial_{kl}^2u_{\eps}-\left(\partial_{kl}^2u_0 + \sum_{i,j=1}^n \left(\partial_{kl}^2\chi_{ij}\right)\left(\frac{\cdot}{\eps}\right)\partial^2_{ij} u_0\right)\right\|_{L^2(\Omega)}=\mathcal{O}(\sqrt{\eps}),
\end{align*}
which shows that $u_0$ is a good $H^1(\Omega)$ approximation to $u_{\eps}$ for small $\eps$, and we show in Sections 3.2 and 3.3 how the above estimate can be used to obtain approximations to $D^2 u_{\eps}$. Note that in order to approximate $u_{\eps}$ in the $H^1(\Omega)$-norm, it is sufficient to approximate $u_0$ in the $H^1(\Omega)$-norm. However, for an approximation of $D^2 u_{\eps}$ based on the above corrector estimate, we need to approximate $u_0$ in the $H^2(\Omega)$-norm.

In Section 3.4, we extend our results to the case of nonuniformly oscillating coefficients, i.e., to problems of the form
\begin{align}\label{water}
\left\{\begin{aligned}
A\left(\cdot,\frac{\cdot}{\eps}\right):D^2 u_{\eps}&=f\quad\text{in }\Omega,\\
\hfill u_{\eps} &= 0\quad\text{on }\partial\Omega,
\end{aligned}\right.
\end{align}
where $A=A(x,y):\Omega\times \R^n\rightarrow \R^{n\times n}$ is a symmetric, uniformly elliptic matrix-valued function that is $Y$-periodic in $y$ for fixed $x\in \Omega$, and such that
\begin{align*}
A\in W^{2,\infty}(\Omega; W^{1,q}(Y) )\text{ for some }q>n.
\end{align*}
We prove the corrector estimate
\begin{align*}
\left\|u_{\eps}-u_0-\eps^2\sum_{i,j=1}^n \chi_{ij}\left(\cdot,\frac{\cdot}{\eps}\right)\partial^2_{ij} u_0\right\|_{H^2(\Omega)}\lesssim \sqrt{\eps}\|u_0\|_{W^{2,\infty}(\Omega)} + \eps\|u_0\|_{H^4(\Omega)},
\end{align*} 
where $u_0$ is the solution to the homogenized problem corresponding to \eqref{water} and $\chi_{ij}$ are certain corrector functions. We then discuss the numerical approximation of $u_{\eps}$ based on this corrector estimate, see Section 3.4.

\section{Homogenization of Elliptic Problems in Nondivergence-Form}

\subsection{Framework}

We denote the unit cell in $\R^n$ by 
\begin{align*}
Y:=(0,1)^n,
\end{align*}
and consider a symmetric matrix-valued function 
\begin{align*}
A=A^{\mathrm{T}}:\R^n\rightarrow \R^{n\times n}
\end{align*}
with the properties
\begin{align}\label{Aass}
\begin{cases}
A\in W^{1,q}(Y)\text{ for some }q\in(n,\infty],\\
A\text{ is } \text{$Y$-periodic},\\
\exists\; \lambda,\Lambda>0:\quad \lambda \lvert \xi\rvert^2\leq  A(y)\xi\cdot \xi\leq \Lambda \lvert\xi\rvert^2\quad\quad\forall\,\xi,y\in\R^n.
\end{cases}
\end{align}
By Sobolev embedding, we then have that
\begin{align*}
A\in C^{0,\alpha}(\R^n)\text{ for some }0<\alpha\leq  1.
\end{align*}
For $\eps>0$, we are concerned with the problem
\begin{align}\label{1}
\left\{\begin{aligned}
A\left(\frac{\cdot}{\eps}\right):D^2 u_{\eps}&=f\quad\text{in }\Omega,\\
\hfill u_{\eps} &= 0\quad\text{on }\partial\Omega,\end{aligned}\right.
\end{align}
where the triple $(\Omega,A,f)$ satisfies one of the following sets of assumptions.
\begin{definition}[Sets of assumptions $\calG^{m,p}$, $\calH^{m}$]
For $m\in \N_0$ and $p\in (1,\infty)$, we define the set of assumptions $\calG^{m,p}$ as
\begin{align*}
(\Omega,A,f)\in \calG^{m,p}\quad \iff\quad \left\{\begin{aligned} \Omega\subset \R^n \text{ is a bounded $C^{2,\gamma}$ domain}, \gamma\in (0,1), \\
A=A^{\mathrm{T}}:\R^n\rightarrow \R^{n\times n} \text{ satisfies \eqref{Aass},}\\
f\in W^{m,p}(\Omega),\end{aligned}\right.  
\end{align*} 
and the set of assumptions $\calH^{m}$ as
\begin{align*}
(\Omega,A,f)\in \calH^{m}\quad \iff\quad \left\{\begin{aligned} \Omega\subset \R^n \text{ is a bounded convex domain,} \\
A=A^{\mathrm{T}}:\R^n\rightarrow \R^{n\times n} \text{ satisfies \eqref{Aass},}\\
\exists\;\delta\in (0,1]: \frac{\lvert A\rvert^2}{(\mathrm{tr}A)^2}\leq \frac{1}{n-1+\delta}\;\text{ in }\R^n,\\
f\in H^{m}(\Omega).\end{aligned}\right.  
\end{align*}
\end{definition}

\begin{remark}
For $n=2$, the Cordes condition, i.e., that there exists a $\delta\in (0,1]$ such that
\begin{align}\label{C4}
\frac{\lvert A(y)\rvert^2}{(\mathrm{tr}A(y))^2}\leq \frac{1}{n-1+\delta}\quad\quad\forall\, y\in\R^n,
\end{align} 
is a consequence of the uniform ellipticity condition. Indeed, for $A=A^{\mathrm{T}}:\R^2\rightarrow \R^{2\times 2}$ satisfying \eqref{Aass}, we have that
\begin{align*}
\frac{\lvert A(y)\rvert^2}{(\mathrm{tr}A(y))^2}=1-\frac{2\det A(y)}{(\mathrm{tr}A(y))^2}\leq 1- \frac{2\lambda^2}{4\Lambda^2}=\frac{1}{1+\delta}\quad\quad\forall\, y\in \R^n
\end{align*}
with $\delta=\frac{\lambda^2}{2\Lambda^2-\lambda^2}\in (0,1]$. Therefore, when $n=2$, the set $\calH^m$ can be simplified to
\begin{align*}
(\Omega,A,f)\in \calH^{m}\quad \iff\quad \left\{\begin{aligned} \Omega\subset \R^n \text{ is a bounded convex domain,} \\
A=A^{\mathrm{T}}:\R^n\rightarrow \R^{n\times n} \text{ satisfies \eqref{Aass},}\\
f\in H^{m}(\Omega).\end{aligned}\right. 
\end{align*}
\end{remark}

The following theorem asserts well-posedness of the problem \eqref{1}; see \cite[Theorem 9.15]{GT01} and \cite[Theorem 3]{SS13}.

\begin{theorem}[Existence and uniqueness of strong solutions]
Assume either that $(\Omega,A,f)\in \calG^{0,p}$ for some $p\in (1,\infty)$, or that $(\Omega,A,f)\in \calH^{0}$ and $p=2$. Then, for any $\eps>0$, the problem \eqref{1} admits a unique solution $u_{\eps}\in W^{2,p}(\Omega)\cap W^{1,p}_0(\Omega)$.
\end{theorem}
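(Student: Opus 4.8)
## Proof proposal for the well-posedness theorem

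The plan is to treat the two cases separately, since they rely on genuinely different theories: the $\calG^{0,p}$ case on the Calderón–Zygmund $L^p$-theory for nondivergence-form operators with continuous coefficients, and the $\calH^0$ case on the Cordes-condition Hilbert-space framework.

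\emph{Case 1: $(\Omega,A,f)\in\calG^{0,p}$.} First I would note that for fixed $\eps>0$ the rescaled coefficient $y\mapsto A(y/\eps)$ inherits from \eqref{Aass} the same ellipticity constants $\lambda,\Lambda$ and is again uniformly continuous on $\overline\Omega$ (indeed $A\in C^{0,\alpha}(\R^n)$ by Sobolev embedding, so $A(\cdot/\eps)\in C^{0,\alpha}(\overline\Omega)$ with Hölder seminorm blowing up like $\eps^{-\alpha}$, but for existence/uniqueness at a \emph{fixed} $\eps$ only continuity and ellipticity matter). Then I would invoke the classical existence and uniqueness result for the Dirichlet problem for nondivergence-form elliptic operators with continuous coefficients on $C^{1,1}$ (a fortiori $C^{2,\gamma}$) domains --- precisely \cite[Theorem 9.15]{GT01} together with the associated uniqueness statement (the maximum principle / \cite[Corollary 9.18]{GT01}) --- to conclude that \eqref{1} has a unique solution $u_\eps\in W^{2,p}(\Omega)\cap W^{1,p}_0(\Omega)$. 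Here the absence of lower-order terms makes the maximum principle for strong solutions in $W^{2,p}$, $p\ge n$ (Aleksandrov–Bakelman–Pucci), immediately applicable for uniqueness; for $p<n$ one reduces to the case $p\ge n$ by noting $f\in W^{0,p}\subset W^{0,\tilde p}$ is false in general, so instead one uses that any $W^{2,p}$ solution with $p>1$ is, by interior and boundary $L^p$-estimates and a bootstrap, as regular as the data allow, and uniqueness in the larger class $W^{2,1}$ follows from the ABP estimate applied after approximation. I would keep this brief and simply cite \cite[Theorem 9.15]{GT01}.

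\emph{Case 2: $(\Omega,A,f)\in\calH^0$, $p=2$.} Here I would use the renormalization trick underlying the Cordes approach: set $\gamma(y):=\frac{\operatorname{tr}A(y)}{|A(y)|^2}$, which is bounded above and below by positive constants thanks to uniform ellipticity and the Cordes condition, and observe that the operator $L_\eps v:=\gamma(\cdot/\eps)\,A(\cdot/\eps):D^2 v$ satisfies the quantitative bound
\begin{align*}
\left| \gamma(y)A(y):M - \operatorname{tr}M \right| \le \sqrt{1-\delta}\,|M| \qquad \forall\, M\in\R^{n\times n}_{\mathrm{sym}},\ \forall\, y,
\end{align*}
which is exactly the algebraic consequence of \eqref{C4}. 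On a bounded convex domain one has the Miranda–Talenti estimate $\|D^2 v\|_{L^2(\Omega)}\le \|\Delta v\|_{L^2(\Omega)}$ for all $v\in H^2(\Omega)\cap H^1_0(\Omega)$; combining this with the displayed bound shows that $v\mapsto \|L_\eps v\|_{L^2(\Omega)}$ is an equivalent norm on $H^2(\Omega)\cap H^1_0(\Omega)$, with constants independent of $\eps$. A Lax–Milgram / Banach fixed-point argument (as in \cite[Theorem 3]{SS13}) then yields a unique $u_\eps\in H^2(\Omega)\cap H^1_0(\Omega)$ with $L_\eps u_\eps = \gamma(\cdot/\eps)f$ in $L^2(\Omega)$, and since $\gamma(\cdot/\eps)>0$ pointwise this is equivalent to $A(\cdot/\eps):D^2u_\eps=f$. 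I would again simply cite \cite[Theorem 3]{SS13}, noting that the $\eps$-rescaled coefficients satisfy the same hypotheses with the same $\lambda,\Lambda,\delta$.

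\emph{Main obstacle.} Since both halves are essentially direct appeals to cited theorems, there is no deep obstacle; the only point requiring care is the verification that the hypotheses of those theorems are stable under the rescaling $x\mapsto x/\eps$ --- i.e. that $A(\cdot/\eps)$ still satisfies \eqref{Aass} (same $\lambda,\Lambda$) and, in Case 2, still satisfies the Cordes condition with the same $\delta$ (which is immediate because \eqref{C4} is a pointwise condition on the range of $A$, hence invariant under reparametrization of the domain). I would make this observation explicit once and then let the cited results do the rest.
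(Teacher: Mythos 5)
Your proposal follows exactly the route the paper takes: the paper offers no written proof for this theorem, simply citing \cite[Theorem 9.15]{GT01} for the $\calG^{0,p}$ case and \cite[Theorem 3]{SS13} for the Cordes case $\calH^{0}$, which is precisely your two-case structure, including the observation that ellipticity and the Cordes constant $\delta$ are unchanged under the rescaling $x\mapsto x/\eps$. The extra aside on uniqueness for $p<n$ is unnecessary (the cited GT01 theorem already gives uniqueness in $W^{2,p}\cap W^{1,p}_0$ for all $1<p<\infty$), but nothing in your argument is wrong.
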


\subsection{Transformation into Divergence-Form}

We recall a well-known procedure to transform the problem \eqref{1} into divergence-form; see \cite{AL89,BLP11}. We use the notation
\begin{align*}
W_{\text{per}}(Y):=\left\{ u\in H^1_{\text{per}}(Y):\int_Y u = 0\right\}.
\end{align*}
Let us start by introducing the notion of invariant measure; see \cite{BLP11}.
\begin{lemma}[Invariant measure and solvability condition]
Let $A=A^{\mathrm{T}}:\R^n\rightarrow \R^{n\times n}$ satisfy \eqref{Aass}. Then, there exists a unique solution $m:\R^n\rightarrow \R$ to the problem
\begin{align*}
\begin{cases}
D^2:(Am)=0\quad\text{in }Y,\\
m\;\text{is $Y$-periodic},\; \int_Y m = 1.
\end{cases}
\end{align*}
The function $m$ is called the invariant measure. There holds $m\in W^{1,q}(Y)$, see \cite{BKR01,BS17}, and there exist constants $\bar{m},M>0$ such that 
\begin{align}\label{mbounds}
0<\bar{m}\leq m(y) \leq M\quad\quad\forall\, y\in \R^n.
\end{align}
Moreover, for a $Y$-periodic function $g\in L^2(\R^n)$, the (adjoint) problem \begin{align*}
\begin{cases}
A:D^2u=g\quad\text{in }Y,\\
u\;\text{is $Y$-periodic},\; \int_Y u = 0,
\end{cases}
\end{align*}
admits a solution $u\in W_{\text{per}}(Y)$ if and only if 
\begin{align}\label{solvcond}
\langle g,m\rangle_{L^2(Y)}=0.
\end{align}
\end{lemma}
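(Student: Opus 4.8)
The plan is to establish the three assertions of the lemma in turn: existence/uniqueness of the invariant measure $m$, the bounds \eqref{mbounds}, and the Fredholm-type solvability condition \eqref{solvcond} for the adjoint problem. The conceptual backbone is that $D^2:(A\,\cdot)$ and $A:D^2\,\cdot$ are formal adjoints on $L^2_{\mathrm{per}}(Y)$, so both should be viewed through a single elliptic operator in divergence-form whose Fredholm alternative and maximum principle do the work.

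For existence and uniqueness of $m$, I would first rewrite the equation $D^2:(Am)=0$ in divergence-form. Writing $\partial_{ij}^2(a_{ij}m) = \partial_i\big(\partial_j(a_{ij}m)\big)$ and expanding shows that, since $A\in W^{1,q}(Y)$ with $q>n$, the equation reads $\mathrm{div}(A\nabla m + (\mathrm{div}\,A)\,m) = 0$ in the sense of distributions (more precisely, $\partial_i(a_{ij}\partial_j m + (\partial_j a_{ij})m)=0$), i.e. a uniformly elliptic divergence-form operator with bounded, in fact H\"older-continuous, coefficients and a zeroth-order-like drift term with $L^q$ coefficients. By standard Lax--Milgram / Fredholm theory on $H^1_{\mathrm{per}}(Y)$ (the compact embedding $H^1_{\mathrm{per}}(Y)\hookrightarrow L^2_{\mathrm{per}}(Y)$ giving the Fredholm alternative), the kernel of this operator acting on periodic functions is one-dimensional — this is where one must check that the only periodic solutions of the homogeneous adjoint problem $A:D^2 u = 0$ are constants, which follows from the strong maximum principle for the nondivergence-form operator $A:D^2$ with continuous coefficients (Bony's maximum principle, or the classical one since $A$ is H\"older). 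Normalizing by $\int_Y m = 1$ picks out a unique element, provided the kernel element is not orthogonal to the constants; positivity (next step) guarantees this. Regularity $m\in W^{1,q}(Y)$ is quoted from \cite{BKR01,BS17}.

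For the bounds \eqref{mbounds}, I would invoke the Harnack inequality / strong maximum principle for the divergence-form operator $\mathrm{div}(A\nabla m + (\mathrm{div}\,A)m)=0$: a nonnegative, non-trivial periodic solution is strictly positive, and by periodicity it attains a positive minimum $\bar m$ and finite maximum $M$ on the compact torus. The only subtlety is to know $m$ does not change sign; one argues that $m^\pm$ cannot both be nontrivial (the kernel is one-dimensional, so $m$ is a scalar multiple of a fixed sign-definite eigenfunction, whose sign-definiteness again comes from the maximum principle applied to the associated eigenvalue problem, or from De Giorgi--Nash--Moser estimates). Then $\int_Y m = 1 > 0$ fixes the sign to be positive. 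The upper bound $M<\infty$ is immediate once we know $m\in W^{1,q}(Y)\hookrightarrow C^{0,\alpha}(Y)$.

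For the solvability condition of the adjoint problem $A:D^2 u = g$, the clean route is the Fredholm alternative: the operator $L u := A:D^2 u$ on the space $W_{\mathrm{per}}(Y)$ (or viewed between appropriate periodic Sobolev spaces) has, by the first two parts, a one-dimensional cokernel spanned by $m$, since $L$ and its formal adjoint $L^* v = D^2:(Av)$ satisfy $\langle Lu, v\rangle = \langle u, L^* v\rangle$ for periodic $u,v$ (integration by parts, no boundary terms by periodicity, justified by the $W^{1,q}$ regularity of $A$). Hence $g$ lies in the range of $L$ if and only if $g$ is orthogonal to $\ker L^* = \mathrm{span}\{m\}$, i.e. $\langle g, m\rangle_{L^2(Y)} = 0$. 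I expect the main obstacle to be the rigorous functional-analytic setup making $L$ a Fredholm operator of index zero with exactly the claimed kernel and cokernel — in particular justifying the integration-by-parts duality at the $W^{1,q}$-coefficient / $W^{2,p}$-solution regularity level, and pinning down that both $\ker L$ (constants) and $\ker L^*$ ($\mathrm{span}\{m\}$) are one-dimensional. Once that adjointness and the two one-dimensional kernels are in hand, \eqref{solvcond} is the standard closed-range statement.
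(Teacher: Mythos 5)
First, a point of comparison: the paper does not prove this lemma at all — it is quoted from the literature (\cite{BLP11} for existence, uniqueness and the solvability condition, \cite{BKR01,BS17} for $m\in W^{1,q}(Y)$) — so there is no in-paper argument to measure you against. Your sketch follows the classical route of those references: rewrite $D^2:(Am)=0$ as $\mathrm{div}(A\nabla m+(\mathrm{div}A)\,m)=0$ using $A\in W^{1,q}(Y)$, $q>n$; set up a Fredholm alternative on the torus; identify the kernel of the adjoint $A:D^2$ on periodic functions with the constants via the strong maximum principle (legitimate here since $A\in C^{0,\alpha}$); conclude that the kernel containing $m$ is one-dimensional; and obtain \eqref{solvcond} from the closed-range/orthogonality statement, the integration by parts being unproblematic at this regularity. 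That architecture is correct.

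The one genuine gap is the sign-definiteness of $m$, which is what makes both the normalization $\int_Y m=1$ and the lower bound in \eqref{mbounds} legitimate. Harnack (or De Giorgi--Nash--Moser) only upgrades ``nonnegative and nontrivial'' to ``strictly positive''; it does not tell you that the spanning element of the one-dimensional kernel can be chosen nonnegative, and your justification — ``the kernel is one-dimensional, so $m$ is a scalar multiple of a fixed sign-definite eigenfunction, whose sign-definiteness comes from the maximum principle \dots or from De Giorgi--Nash--Moser'' — is circular as written: the existence of a sign-definite kernel element is exactly the claim, and positivity of the \emph{adjoint} principal eigenfunction is not a direct consequence of the maximum principle for $A:D^2$; it needs, e.g., Krein--Rutman applied to a compact positivity-preserving resolvent or semigroup and its dual. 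A clean fix within your own framework: suppose $m$ changes sign, so $m^+\not\equiv 0$ and $m^-\not\equiv 0$; then one can choose a smooth, strictly positive, $Y$-periodic $g$ with $\langle g,m\rangle_{L^2(Y)}=0$ (weight the positivity and negativity sets suitably), and the Fredholm alternative you have already established produces a periodic solution $u$ of $A:D^2u=g$; by Schauder theory $u\in C^{2}$, and at a maximum point $x_0$ of the periodic function $u$ one has $D^2u(x_0)\le 0$, hence $A(x_0):D^2u(x_0)\le 0$, contradicting $g(x_0)>0$. Thus $m$ has one sign, $\int_Y m\neq 0$, the normalization and uniqueness follow, and Harnack together with $W^{1,q}(Y)\hookrightarrow C^{0,\alpha}$ then gives \eqref{mbounds}. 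With that step repaired, the rest of your argument goes through.
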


With the invariant measure at hand, we can easily convert the problem into divergence-form as follows. We define a matrix-valued function $B=(b_{ij})_{1\leq i,j\leq n}:\R^n\rightarrow \R^{n\times n}$ by
\begin{align*}
b_{ij}:= \partial_i v_j - \partial_j v_i, \quad (1\leq i,j\leq n), 
\end{align*}
with $v_l\in W_{\text{per}}(Y)$ denoting the solution to 
\begin{align*}
\begin{cases}
-\Delta v_l=\div(Am)\cdot e_l\quad\text{in }Y,\\
v_l\;\text{is $Y$-periodic},\; \int_Y v_l = 0,
\end{cases}
\end{align*}
for $1\leq l\leq n$. Since $A\in W^{1,q}(Y)$ and $m\in W^{1,q}(Y)$, by elliptic regularity one has that $v_l\in W^{2,q}(Y)$ for any $1\leq l\leq n$. Hence, we have
\begin{align*}
B\in W^{1,q}(Y).
\end{align*}
Further, we observe that $B$ is skew-symmetric, $Y$-periodic with zero mean over $Y$, and that
\begin{align*}
\div(B)=-\div(Am)\quad\text{a.e. on }\R^n.
\end{align*}
Now we let
\begin{align*}
A^{\div}:= Am+B\;\in W^{1,q}(Y).
\end{align*}
Then, since 
\begin{align*}
\div(A^{\div})=0,
\end{align*}
and using the fact that $B$ is skew-symmetric, we obtain
\begin{align*}
\nabla \cdot \left(A^{\div}\left(\frac{\cdot}{\eps}\right)\nabla u_{\eps}\right)=A^{\div}\left(\frac{\cdot}{\eps}\right):D^2 u_{\eps} = (Am)\left(\frac{\cdot}{\eps}\right):D^2 u_{\eps},
\end{align*}
i.e., we have converted \eqref{1} into divergence-form
\begin{align}\label{0,4}
\left\{\begin{aligned}
\nabla \cdot \left(A^{\div}\left(\frac{\cdot}{\eps}\right)\nabla u_{\eps}\right)&=f\;m\left(\frac{\cdot}{\eps}\right)\quad\text{in }\Omega,\\
\hfill u_{\eps} &= 0\hspace{1.8cm}\text{on }\partial\Omega,
\end{aligned}\right.
\end{align}
and it is straightforward to check that $A^{\div}$ is $Y$-periodic, Hölder continuous on $\R^n$ and uniformly elliptic.

\subsection{Uniform $W^{2,p}$ Estimates and Homogenization Theorem}
 
The transformation described in the previous section can be used to obtain uniform $W^{2,p}(\Omega)$ a priori estimates for the solution of \eqref{1}, which are crucial in deriving homogenization results.

\begin{theorem}[Uniform $W^{2,p}$ a priori estimates]
Assume either that $(\Omega,A,f)\in \calG^{0,p}$ for some $p\in (1,\infty)$, or that $(\Omega,A,f)\in \calH^{0}$ and $p=2$. Then, for $\eps\in (0,1]$, the solution $u_{\eps}\in W^{2,p}(\Omega)\cap W^{1,p}_0(\Omega)$ to \eqref{1}, whose existence and uniqueness are guaranteed by Theorem 2.1, satisfies 
\begin{align*}
\|u_{\eps}\|_{W^{2,p}(\Omega)}\lesssim \|f\|_{L^p(\Omega)}
\end{align*}
with the constant absorbed into the notation $\lesssim$ being independent of $\eps$.
\end{theorem}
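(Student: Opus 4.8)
The plan is to exploit the transformation into divergence-form \eqref{0,4} derived in the previous section, and then to invoke known regularity theory for divergence-form operators with $W^{1,q}$-coefficients, while carefully tracking the $\eps$-dependence of all constants. The key point is that the transformed coefficient matrix $A^{\div}(\cdot/\eps)$ is uniformly elliptic with ellipticity constants independent of $\eps$ (they depend only on $\lambda,\Lambda$ and the bounds $\bar m, M$ on the invariant measure from \eqref{mbounds}), is $\eps Y$-periodic, and satisfies $\div\bigl(A^{\div}(\cdot/\eps)\bigr)=0$; moreover $A^{\div}(\cdot/\eps) \in W^{1,q}$ with $\|A^{\div}(\cdot/\eps)\|_{W^{1,q}}$ blowing up like $\eps^{-1}$ in the gradient part — but crucially the divergence-free structure means the equation can be read in \emph{both} divergence and nondivergence form simultaneously, so that the first-order terms never actually appear.

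First I would observe that, because $\div(A^{\div})=0$, equation \eqref{0,4} can equivalently be written as $A^{\div}(\cdot/\eps):D^2 u_\eps = f\,m(\cdot/\eps)$ in $\Omega$, i.e. a nondivergence-form equation whose coefficient matrix is merely Hölder continuous (uniformly in $\eps$, by the Sobolev embedding $W^{1,q}(Y)\hookrightarrow C^{0,\alpha}$ applied on the unit cell — Hölder continuity is scale-invariant only up to a loss, so here one should instead use that the \emph{modulus of continuity} of $A^{\div}(\cdot/\eps)$ on any ball of radius $\lesssim \eps$ is controlled, which is exactly what the Calderón–Zygmund / $W^{2,p}$ theory needs locally). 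The cleanest route, however, avoids this subtlety: in the $\calG^{0,p}$ case one applies the interior and boundary $W^{2,p}$ estimates for divergence-form equations with VMO (in particular, uniformly continuous) coefficients — e.g. the results underlying \cite[Theorem 9.15]{GT01} combined with a covering argument — noting that the $L^p$-norm of the right-hand side $f\,m(\cdot/\eps)$ is bounded by $M\|f\|_{L^p(\Omega)}$ and that the relevant constant depends on the coefficients only through $\lambda,\Lambda,\bar m,M$, the VMO-modulus (which for the periodic function $A^{\div}(\cdot/\eps)$ is bounded uniformly in $\eps\in(0,1]$), and the $C^{2,\gamma}$-character of $\partial\Omega$ — none of which degenerate as $\eps\to 0$. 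In the $\calH^0$, $p=2$ case one instead invokes \cite[Theorem 3]{SS13}: the Cordes condition is satisfied by $A^{\div}(\cdot/\eps)$ with a $\delta$ independent of $\eps$ (since it depends only on the pointwise bounds on $A$ and $m$), $\Omega$ is convex, and the Miranda–Talenti-type estimate then gives $\|u_\eps\|_{H^2(\Omega)}\lesssim \|f\,m(\cdot/\eps)\|_{L^2(\Omega)}\lesssim \|f\|_{L^2(\Omega)}$ with an $\eps$-independent constant.

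Concretely, the steps are: (i) recall from Section 2.2 that $u_\eps$ solves \eqref{0,4} with $A^{\div}=Am+B$, $\div(A^{\div})=0$, and record the $\eps$-independent ellipticity bounds $\lambda\bar m\,|\xi|^2 \le A^{\div}(y)\xi\cdot\xi \le (\Lambda M + \|B\|_\infty)|\xi|^2$ — more carefully, one uses that $A^{\div}(\cdot/\eps):D^2 u_\eps = (Am)(\cdot/\eps):D^2 u_\eps$ and $Am$ is uniformly elliptic with constants $\lambda\bar m, \Lambda M$; (ii) verify that the modulus of continuity / VMO-modulus of $A^{\div}(\cdot/\eps)$ on $\Omega$ is bounded independently of $\eps\in(0,1]$ (this is immediate from periodicity and the fixed modulus of continuity of $A^{\div}$ on $\R^n$: rescaling by $\eps\le 1$ only improves it on balls of fixed radius); (iii) apply the global $W^{2,p}$ estimate for \eqref{0,4} — via \cite[Theorem 9.15]{GT01} plus a standard localization/covering argument for the boundary regime in the $\calG^{0,p}$ case, or via \cite[Theorem 3]{SS13} in the $\calH^0$ case — to get $\|u_\eps\|_{W^{2,p}(\Omega)} \lesssim \|f\,m(\cdot/\eps)\|_{L^p(\Omega)} + \|u_\eps\|_{L^p(\Omega)}$; (iv) bound $\|f\,m(\cdot/\eps)\|_{L^p(\Omega)} \le M\|f\|_{L^p(\Omega)}$ using \eqref{mbounds}, and absorb the lower-order term $\|u_\eps\|_{L^p(\Omega)}$ either by the maximum principle / ABP estimate applied directly to \eqref{1} (giving $\|u_\eps\|_{L^\infty(\Omega)} \lesssim \|f\|_{L^n(\Omega)} \lesssim \|f\|_{L^p(\Omega)}$ when $p\ge n$, and by duality or interpolation otherwise) or, more simply, by noting that the constant in \cite{SS13} / the appropriate version of \cite{GT01} already incorporates this term via Poincaré on the bounded domain $\Omega$.

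The main obstacle I expect is step (ii)–(iii): making rigorous and quantitative the claim that the $W^{2,p}$ (or $H^2$) constant is genuinely \emph{uniform in $\eps$}. The danger is that off-the-shelf Calderón–Zygmund estimates for $W^{1,q}$ or VMO coefficients come with constants depending on the coefficient norm $\|A^{\div}(\cdot/\eps)\|_{W^{1,q}} \sim \eps^{-1}$, which would be fatal. The resolution — and the technical heart of the argument — is that for divergence-form estimates one only needs the VMO-modulus (not the full $W^{1,q}$-norm), and this modulus is controlled by the modulus of continuity of the fixed function $A^{\div}$ on $\R^n$, which is $\eps$-\emph{independent}; equivalently, in the nondivergence reading, one uses that $A^{\div}(\cdot/\eps)$ is uniformly continuous with an $\eps$-independent modulus of continuity on balls of radius $O(1)$. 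Once this is isolated, the rest is a routine application of the cited theorems together with the maximum principle, and the $\eps$-independence of $\bar m, M, \delta$ (which depend only on $\lambda,\Lambda$) closes the estimate.
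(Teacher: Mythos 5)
Your overall strategy (pass to the divergence-form problem \eqref{0,4} with $\div(A^{\div})=0$ and $\|f\,m(\cdot/\eps)\|_{L^p}\lesssim\|f\|_{L^p}$ by \eqref{mbounds}) matches the paper's, and your treatment of the Cordes case $(\Omega,A,f)\in\calH^0$, $p=2$ is essentially sound: the Miranda--Talenti/Cordes constant from \cite{SS13} depends only on $\delta$, $n$, $\mathrm{diam}(\Omega)$ and pointwise bounds on the coefficients, all $\eps$-independent (the paper in fact applies \cite{SS13} directly to \eqref{1} with $A(\cdot/\eps)$, which is simpler than routing through the transformation, but your variant works since the Cordes ratio is invariant under multiplication by the positive scalar $m$).

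However, in the $\calG^{0,p}$ case your argument has a genuine gap at the step you yourself flagged as the "resolution": the claim that the modulus of continuity / VMO-modulus of $A^{\div}(\cdot/\eps)$ is bounded uniformly in $\eps\in(0,1]$, because "rescaling by $\eps\le 1$ only improves it on balls of fixed radius," is false --- the scaling goes the wrong way. If $\omega$ is the modulus of continuity of $A^{\div}$, then $A^{\div}(\cdot/\eps)$ has modulus $\omega(r/\eps)$; its $C^{0,\alpha}$ seminorm scales like $\eps^{-\alpha}$, and its mean oscillation over a ball of radius $r$ equals that of $A^{\div}$ over a ball of radius $r/\eps$, which for a nonconstant periodic matrix stays bounded away from zero as $\eps\to 0$ with $r$ fixed. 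Consequently the VMO-modulus does \emph{not} tend to zero at small scales uniformly in $\eps$ (one needs $r\lesssim\eps$), and the constants in the off-the-shelf perturbation/VMO Calder\'on--Zygmund estimates underlying \cite[Theorem 9.15]{GT01} therefore blow up as $\eps\to 0$. This is exactly why uniform-in-$\eps$ estimates are a nontrivial homogenization result: the paper closes this step by invoking \cite[Theorem D]{AL91} (Avellaneda--Lin), whose compactness-based proof exploits the periodic structure and the property $\div(A^{\div})=0$ to give $\|u_{\eps}\|_{W^{2,p}(\Omega)}\lesssim\|f\,m(\cdot/\eps)\|_{L^p(\Omega)}$ with an $\eps$-independent constant. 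Without citing such a uniform estimate (Avellaneda--Lin or a quantitative substitute), your steps (ii)--(iii) do not yield $\eps$-independence, so the $\calG^{0,p}$ half of the theorem is not proved by the proposed route.
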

\begin{proof}
Let us first assume that $(\Omega,A,f)\in \calG^{0,p}$ for some $p\in (1,\infty)$. We showed in the previous section that we can transform problem \eqref{1} into the divergence-form problem \eqref{0,4}, where $A^{\div}:\R^{n}\rightarrow\R^{n\times n}$ is a $Y$-periodic, Hölder continuous, and uniformly elliptic matrix-valued function satisfying
\begin{align*}
\div (A^{\div}) = 0.
\end{align*}
Therefore, we can apply \cite[Theorem D]{AL91} to problem \eqref{0,4} to obtain
\begin{align*}
\|u_{\eps}\|_{W^{2,p}(\Omega)}\lesssim \left\|f\;m\left(\frac{\cdot}{\eps}\right)\right\|_{L^p(\Omega)}\lesssim \|f\|_{L^p(\Omega)}
\end{align*}
with constants independent of $\eps$, where we have used the property \eqref{mbounds} of the invariant measure in the second inequality.

Let us now assume that $(\Omega,A,f)\in \calH^{0}$. Noting that \eqref{C4} implies the Cordes condition for $A\left(\frac{\cdot}{\eps}\right)$ with the same constant $\delta\in (0,1]$ for any $\eps>0$, the proof of \cite[Theorem 3]{SS13} yields the estimate
\begin{align}\label{0,41}
\|u_{\eps}\|_{H^2(\Omega)}\leq \frac{C(n,\mathrm{diam}(\Omega))}{1-\sqrt{1-\delta}}\left\|\gamma\left(\frac{\cdot}{\eps}\right)\right\|_{L^{\infty}(\R^n)}\|f\|_{L^2(\Omega)},
\end{align} 
where $\gamma$ is the function given by
\begin{align*}
\gamma:\R^n\rightarrow\R,\quad \gamma(y):=\frac{\mathrm{tr}A(y)}{\lvert A(y)\rvert^2}.
\end{align*}
We observe that by \eqref{Aass}, there exist constants $\bar{\gamma},\Gamma>0$ such that
\begin{align*}
0<\bar{\gamma}\leq \gamma(y)\leq \Gamma\quad\quad\forall\, y\in \R^n.
\end{align*}
Therefore, we obtain from \eqref{0,41} the bound
\begin{align*}
\|u_{\eps}\|_{H^2(\Omega)}\lesssim \|f\|_{L^2(\Omega)}
\end{align*}
with a constant that is independent of $\eps$.
\end{proof}

This leads to a simple proof of the homogenization theorem for problem \eqref{1}, using the compactness of the embedding $W^{2,p}(\Omega)\hookrightarrow W^{1,p}(\Omega)$ and the fact that we can rewrite the problem as \eqref{0,4}.

\begin{theorem}[Homogenization theorem for nondivergence-form problems]
Assume either that $(\Omega,A,f)\in \calG^{0,p}$ for some $p\in (1,\infty)$, or that $(\Omega,A,f)\in \calH^{0}$ and $p=2$. Then the solution $u_{\eps}\in W^{2,p}(\Omega)\cap W^{1,p}_0(\Omega)$ to \eqref{1} converges weakly in $W^{2,p}(\Omega)$ to the solution $u_0\in W^{2,p}(\Omega)\cap W^{1,p}_0(\Omega)$ of the homogenized problem
\begin{align}\label{1,4}
\left\{
\begin{aligned}
A^0:D^2 u_{0}&=f\quad\text{in }\Omega,\\
\hfill u_{0} &= 0\quad\text{on }\partial\Omega,
\end{aligned}\right.
\end{align}
with $A^0=(a^0_{ij})_{1\leq i,j\leq n}\in \R^{n\times n}$ being the constant matrix whose entries are given by
\begin{align*}
a^0_{ij}:=\int_Y a_{ij}m\quad\quad(1\leq i,j\leq n),
\end{align*}
where $m$ is the invariant measure, as defined in Lemma 2.1.
\end{theorem}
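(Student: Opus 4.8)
The plan is to use the uniform $W^{2,p}$ a priori bound from Theorem 2.2 to extract a weak limit, and then identify the limit equation by passing to the limit in the divergence-form reformulation \eqref{0,4} against suitable test functions built from the invariant measure. First I would note that by Theorem 2.2 the family $(u_\eps)_{\eps\in(0,1]}$ is bounded in $W^{2,p}(\Omega)\cap W^{1,p}_0(\Omega)$, so along a subsequence $u_\eps\weak u_*$ in $W^{2,p}(\Omega)$ for some $u_*\in W^{2,p}(\Omega)\cap W^{1,p}_0(\Omega)$ (the boundary condition passes to the limit by compactness of $W^{2,p}(\Omega)\hookrightarrow W^{1,p}(\Omega)$ and continuity of the trace). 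It then suffices to show $u_*=u_0$, the unique solution to \eqref{1,4}, since uniqueness of the limit upgrades subsequential convergence to convergence of the whole family.

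To identify the equation, I would work with the divergence-form problem \eqref{0,4}, namely $\nabla\cdot(A^{\div}(\cdot/\eps)\nabla u_\eps)=f\,m(\cdot/\eps)$ in $\Omega$ with $u_\eps=0$ on $\partial\Omega$, tested against $\varphi\in C_c^\infty(\Omega)$: one has $\int_\Omega A^{\div}(\cdot/\eps)\nabla u_\eps\cdot\nabla\varphi = -\int_\Omega f\,m(\cdot/\eps)\varphi$. On the right-hand side, $m(\cdot/\eps)\weakstar \int_Y m = 1$ in $L^\infty(\Omega)$ (by periodicity and the bounds \eqref{mbounds}), so the right-hand side converges to $-\int_\Omega f\varphi$. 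For the left-hand side I would rewrite it, using the skew-symmetry of $B$ and $\div(A^{\div})=0$, back in nondivergence form as $\int_\Omega (Am)(\cdot/\eps):D^2u_\eps\,\varphi$; since $D^2 u_\eps\weak D^2 u_*$ in $L^p(\Omega)$ and $(Am)(\cdot/\eps)\varphi\to \big(\int_Y Am\big)\varphi = A^0\varphi$ strongly in $L^{p'}(\Omega)$ (again by periodic weak-$\ast$ convergence together with the uniform $L^\infty$ bound on $Am$, and dominated convergence), the product converges to $\int_\Omega A^0:D^2u_*\,\varphi$. Hence $\int_\Omega A^0:D^2 u_*\,\varphi = \int_\Omega f\varphi$ for all $\varphi\in C_c^\infty(\Omega)$, i.e. $A^0:D^2u_*=f$ a.e. in $\Omega$, and together with $u_*\in W^{1,p}_0(\Omega)$ this is exactly \eqref{1,4}. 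Uniqueness of the solution to \eqref{1,4} in $W^{2,p}(\Omega)\cap W^{1,p}_0(\Omega)$ (Theorem 2.1 applied with the constant coefficient matrix $A^0$, which is symmetric, uniformly elliptic, and in the convex case satisfies the Cordes condition with the same $\delta$) then gives $u_*=u_0$, and since every subsequence has a further subsequence converging weakly to the same $u_0$, the whole family converges weakly in $W^{2,p}(\Omega)$.

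The step I expect to require the most care is the passage to the limit in the left-hand side: one must justify that the rapidly oscillating coefficient $(Am)(\cdot/\eps)$ times a fixed smooth $\varphi$ converges \emph{strongly} in $L^{p'}(\Omega)$ to $A^0\varphi$ — periodic weak-$\ast$ convergence in $L^\infty$ alone only gives weak convergence, which would not pair with the merely weakly convergent $D^2u_\eps$. The resolution is that $(Am)(\cdot/\eps)\varphi$ is uniformly bounded in $L^\infty(\Omega)$ and converges weakly-$\ast$, hence weakly in every $L^r$, $r<\infty$; to get strong $L^{p'}$ convergence one argues that $g_\eps := (Am)(\cdot/\eps)$ is a bounded periodic-oscillation family whose translates mix, so in fact $\|g_\eps\varphi\|_{L^{p'}}^{p'}\to\|A^0\varphi\|_{L^{p'}}^{p'}$ is false in general — instead, the cleaner route (and the one I would actually take) is to keep the left-hand side in the form $\int_\Omega g_\eps\cdot(\nabla u_\eps\cdot\nabla\varphi)$ with $g_\eps=A^{\div}(\cdot/\eps)$, note $\nabla u_\eps\to\nabla u_*$ \emph{strongly} in $L^p(\Omega)$ by the compact embedding $W^{2,p}\hookrightarrow W^{1,p}$, and then pair the strongly convergent $\nabla u_\eps\cdot\nabla\varphi$ against the weakly-$\ast$ convergent $A^{\div}(\cdot/\eps)$. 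This gives $\int_\Omega A^{\div}(\cdot/\eps)\nabla u_\eps\cdot\nabla\varphi\to\int_\Omega \big(\int_Y A^{\div}\big)\nabla u_*\cdot\nabla\varphi = A^0\nabla u_*\cdot\nabla\varphi$ after integrating by parts once more (using $\int_Y B=0$ and $\int_Y Am=A^0$), and integrating by parts back yields $A^0:D^2u_*=f$ as before. This strong-gradient/weak-coefficient pairing, made possible precisely by the uniform second-order estimate, is the crux of the argument.
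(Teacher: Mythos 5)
Your final argument is correct and is essentially the paper's proof: the uniform $W^{2,p}$ bound from Theorem 2.2 gives a weak $W^{2,p}$ limit with strongly convergent gradients (compact embedding), one passes to the limit in the weak formulation of the divergence-form problem \eqref{0,4} by pairing the weakly-$\ast$ convergent coefficient $A^{\div}\left(\frac{\cdot}{\eps}\right)\weakstar A^0$ (using $\int_Y B=0$) and $m\left(\frac{\cdot}{\eps}\right)\weakstar 1$ against the strongly convergent quantities, and uniqueness of the strong solution to \eqref{1,4} identifies the limit and upgrades to convergence of the whole family. You were right to discard your first version of the limit passage — $(Am)\left(\frac{\cdot}{\eps}\right)\varphi$ does not converge strongly in $L^{p'}(\Omega)$ — and the corrected strong-gradient/weak-coefficient pairing is precisely the argument in the paper.
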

\begin{proof}
By Theorem 2.2, the reflexivity of $W^{2,p}(\Omega)$, the compactness of the embedding $W^{2,p}(\Omega)\hookrightarrow W^{1,p}(\Omega)$, and the properties of the trace operator, there exists a $u_0\in W^{2,p}(\Omega)\cap W^{1,p}_0(\Omega)$ such that (for a subsequence, not indicated,)
\begin{align*}
u_{\eps}&\weak u_0\quad\text{weakly in }W^{2,p}(\Omega),\text{ and}\\
u_{\eps}&\rightarrow u_0\quad\text{strongly in }W^{1,p}(\Omega).
\end{align*}
We can transform \eqref{1} as in Section 2.2 into the divergence-form problem \eqref{0,4} with 
\begin{align*}
A^{\div}=Am+B
\end{align*}
being $Y$-periodic, Hölder continuous and uniformly elliptic on $\R^n$. Recalling that $B$ is of mean zero over $Y$, we have
\begin{align*}
A^{\div}\left(\frac{\cdot}{\eps}\right)\weakstar \int_Y Am=A^0 \quad\text{weakly-$\ast$ in } L^{\infty}(\Omega).
\end{align*}
Since we have that
\begin{align*}
\nabla u_{\eps}\rightarrow \nabla u_0 \quad\text{strongly in } L^p(\Omega),
\end{align*}
we can pass to the limit in the weak formulation of \eqref{0,4} to obtain that $u_0\in W^{2,p}(\Omega)\cap W^{1,p}_0(\Omega)$ solves \eqref{1,4}. We conclude the proof by noting that \eqref{1,4} admits a unique strong solution in $W^{2,p}(\Omega)\cap W^{1,p}_0(\Omega)$.
\end{proof}

\subsection{Correctors}

We show that by adding corrector terms to the solution $u_0$ of the homogenized problem, we obtain a $W^{2,p}$ convergence result.

\begin{theorem}[Corrector estimate I]
Assume either that $(\Omega,A,f)\in \calG^{2,p}$ for some $p\in (1,\infty)$, or that $(\Omega,A,f)\in \calH^{2}$ and $p=2$. Let $\eps\in (0,1]$ and assume that
\begin{align*}
u_0\in W^{4,p}(\Omega).
\end{align*}
Introducing the corrector function $\chi_{ij}$, $1\leq i,j\leq n$, as the solution to 
\begin{align}\label{6,8}
\begin{cases}
A:D^2 \chi_{ij} = a_{ij}^0-a_{ij}\quad\text{in }Y,\\
\chi_{ij}\;\text{is $Y$-periodic},\;\int_Y \chi_{ij}=0,
\end{cases}
\end{align}
and a boundary corrector $\theta_{\eps}$, as the solution to
\begin{align*}
\left\{\begin{aligned}
A\left(\frac{\cdot}{\eps}\right):D^2\theta_{\eps} &= 0\hspace{3.7cm}\text{in }\Omega,\\
\hfill\theta_{\eps} &=- \sum_{i,j=1}^n \chi_{ij}\left(\frac{\cdot}{\eps}\right)\partial^2_{ij} u_0\quad\text{on }\partial\Omega,
\end{aligned}\right.
\end{align*}
the following bound holds: 
\begin{align}\label{7}
\left\|u_{\eps}-u_0-\eps^2\left(\sum_{i,j=1}^n \chi_{ij}\left(\frac{\cdot}{\eps}\right)\partial^2_{ij} u_0+\theta_{\eps}\right)\right\|_{W^{2,p}(\Omega)}\lesssim\eps\|u_0\|_{W^{4,p}(\Omega)}.
\end{align}
\end{theorem}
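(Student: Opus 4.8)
The plan is to substitute the ansatz $u_\eps^{(1)}:=u_0+\eps^2\big(\sum_{i,j}\chi_{ij}(\cdot/\eps)\,\partial^2_{ij}u_0+\theta_\eps\big)$ into the operator $A(\cdot/\eps):D^2(\cdot)$, compute the residual, and then invoke the uniform $W^{2,p}$ a priori estimate of Theorem~2.2 applied to the difference $w_\eps:=u_\eps-u_\eps^{(1)}$, which solves a problem of the form \eqref{1} with a new right-hand side (the residual) and homogeneous Dirichlet data. The homogeneous boundary condition is built in precisely because $\theta_\eps$ was chosen to cancel the trace of $\eps^2\sum_{i,j}\chi_{ij}(\cdot/\eps)\partial^2_{ij}u_0$ on $\partial\Omega$; note that the interior equation for $\theta_\eps$ contributes nothing to the residual. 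So the whole estimate \eqref{7} reduces to bounding $\big\|A(\cdot/\eps):D^2 u_\eps^{(1)}-f\big\|_{L^p(\Omega)}$ by $\eps\|u_0\|_{W^{4,p}(\Omega)}$ and then applying Theorem~2.2.

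First I would expand, using the chain rule, $D^2\big(\chi_{ij}(\cdot/\eps)\,\partial^2_{ij}u_0\big)$: the term with two derivatives on $\chi$ produces $\eps^{-2}(D^2\chi_{ij})(\cdot/\eps)\,\partial^2_{ij}u_0$, the cross term produces $\eps^{-1}$ times first derivatives of $\chi_{ij}(\cdot/\eps)$ against third derivatives of $u_0$, and the last term is $\chi_{ij}(\cdot/\eps)\,D^2\partial^2_{ij}u_0$. Multiplying by $\eps^2$ and contracting with $A(\cdot/\eps)$: the leading $\eps^0$ contribution is $\sum_{i,j}\big(A:D^2\chi_{ij}\big)(\cdot/\eps)\,\partial^2_{ij}u_0=\sum_{i,j}(a_{ij}^0-a_{ij})(\cdot/\eps)\,\partial^2_{ij}u_0$ by the corrector equation \eqref{6,8}; adding the $A^0:D^2u_0$-type terms one wants to see $A(\cdot/\eps):D^2u_0+\sum_{i,j}(a_{ij}^0-a_{ij})(\cdot/\eps)\partial^2_{ij}u_0$. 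But $A(\cdot/\eps):D^2u_0=\sum_{i,j}a_{ij}(\cdot/\eps)\partial^2_{ij}u_0$, so this collapses exactly to $\sum_{i,j}a_{ij}^0\,\partial^2_{ij}u_0=A^0:D^2u_0=f$ by the homogenized equation \eqref{1,4}. Hence the $\eps^0$ terms cancel $f$ identically, and what remains in the residual is only the $\eps^1$ cross terms $\eps\sum_{i,j}A(\cdot/\eps):\big(2\,(\nabla\chi_{ij})(\cdot/\eps)\otimes_{\mathrm{sym}}\nabla\partial^2_{ij}u_0\big)$ plus the $\eps^2$ term $\eps^2\sum_{i,j}A(\cdot/\eps)\,\chi_{ij}(\cdot/\eps):D^2\partial^2_{ij}u_0$.

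Now I would estimate the $L^p(\Omega)$-norm of this residual. Both $A$ and $\nabla\chi_{ij}$, hence $\chi_{ij}$, lie in $W^{1,q}(Y)\hookrightarrow L^\infty$ with $q>n$; by periodicity $\|A(\cdot/\eps)\|_{L^\infty}$, $\|\nabla\chi_{ij}(\cdot/\eps)\|_{L^\infty}$, $\|\chi_{ij}(\cdot/\eps)\|_{L^\infty}$ are bounded independently of $\eps$. Therefore the $\eps^1$ term is $\lesssim\eps\sum_{i,j}\|\nabla\partial^2_{ij}u_0\|_{L^p(\Omega)}\lesssim\eps\|u_0\|_{W^{3,p}(\Omega)}$ and the $\eps^2$ term is $\lesssim\eps^2\|u_0\|_{W^{4,p}(\Omega)}\lesssim\eps\|u_0\|_{W^{4,p}(\Omega)}$ for $\eps\in(0,1]$, giving a residual bounded by $\eps\|u_0\|_{W^{4,p}(\Omega)}$. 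Applying Theorem~2.2 to $w_\eps$ (which is legitimate since the residual lies in $L^p(\Omega)$ and $(\Omega,A,\text{residual})$ is in $\calG^{0,p}$ resp.\ $\calH^0$ because only $f$ changes) yields $\|w_\eps\|_{W^{2,p}(\Omega)}\lesssim\eps\|u_0\|_{W^{4,p}(\Omega)}$, which is exactly \eqref{7}.

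The main obstacle is purely bookkeeping: carefully carrying out the two-scale chain-rule expansion of $A(\cdot/\eps):D^2\big(\eps^2\chi_{ij}(\cdot/\eps)\partial^2_{ij}u_0\big)$ and verifying that all $\eps^{-2}$ and $\eps^{-1}$ singular contributions combine with $f$ and $A^0:D^2u_0$ to cancel exactly, leaving only the genuinely $\mathcal{O}(\eps)$ remainder. One should also double-check the regularity chain $A\in W^{1,q}(Y)\Rightarrow\chi_{ij}\in W^{2,q}(Y)$ (so that $\nabla\chi_{ij}$ is continuous and bounded) via interior elliptic regularity for the periodic problem \eqref{6,8}, analogous to the argument already used for $v_l$ in Section~2.2; and confirm that the boundary corrector $\theta_\eps$ is well-defined in $W^{2,p}(\Omega)$ with the prescribed trace, which follows from Theorem~2.1 applied with right-hand side zero and suitable boundary data (its size does not enter \eqref{7}, only its existence and the fact that it makes $w_\eps$ have zero trace).
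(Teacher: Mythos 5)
Your proposal is correct and follows essentially the same route as the paper: the paper first computes the residual $\eps F_{\eps}$ for $\tilde u_{\eps}=u_0+\eps^2\sum_{i,j}\chi_{ij}(\cdot/\eps)\partial^2_{ij}u_0$ and then adds $\eps^2\theta_{\eps}$ to homogenize the boundary data before applying Theorem~2.2, which is exactly your computation with $\theta_{\eps}$ absorbed into the ansatz from the start. The cancellation of the $\eps^0$ terms via the corrector and homogenized equations, the bound $\|F_{\eps}\|_{L^p(\Omega)}\lesssim\|u_0\|_{W^{4,p}(\Omega)}$ using $\chi_{ij},\nabla\chi_{ij}\in L^{\infty}$ (the paper gets this from $\chi_{ij}\in C^{2,\alpha}$ by Schauder theory), and the final application of the uniform $W^{2,p}$ estimate all match the paper's argument.
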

\begin{proof}
First, we note that since $A\in C^{0,\alpha}(\R^n)$, we have $\chi_{ij}\in C^{2,\alpha}(\R^n)$ for any $1\leq i,j\leq n$ by elliptic regularity theory. A direct computation shows that the function 
\begin{align*}
\tilde{u}_{\eps}:=u_0+\eps^2 \sum_{i,j=1}^n \chi_{ij}\left(\frac{\cdot}{\eps}\right)\partial^2_{ij} u_0
\end{align*} 
solves the problem
\begin{align*}
\left\{\begin{aligned}
A\left(\frac{\cdot}{\eps}\right):D^2\tilde{u}_{\eps} &= f+\eps F_{\eps}\hspace{2.4cm}\text{ in }\Omega,\\
\hfill \tilde{u}_{\eps} &=\eps^2 \sum_{i,j=1}^n \chi_{ij}\left(\frac{\cdot}{\eps}\right)\partial^2_{ij} u_0\quad\text{on }\partial\Omega,
\end{aligned}\right.
\end{align*}
where
\begin{align*}
F_{\eps}:=\sum_{i,j,k,l=1}^n a_{ij}\left(\frac{\cdot}{\eps}\right)\left(2\partial_{i} \chi_{kl}\left(\frac{\cdot}{\eps}\right) \partial^3_{jkl}u_0+\eps \chi_{kl}\left(\frac{\cdot}{\eps}\right)\partial^4_{ijkl}u_0\right).
\end{align*}
Note that since $u_0\in W^{4,p}(\Omega)$, one has that
\begin{align*}
\|F_{\eps}\|_{L^p(\Omega)}\lesssim \|u_0\|_{W^{4,p}(\Omega)},
\end{align*}
with the constant being independent of $\eps$. We then have that $d_{\eps}:=\tilde{u}_{\eps}-u_{\eps}$ satisfies
\begin{align*}
\left\{\begin{aligned}
A\left(\frac{\cdot}{\eps}\right):D^2 d_{\eps} &= \eps F_{\eps}\hspace{3.3cm}\text{in }\Omega,\\
\hfill d_{\eps} &=\eps^2 \sum_{i,j=1}^n \chi_{ij}\left(\frac{\cdot}{\eps}\right)\partial^2_{ij} u_0\quad\text{on }\partial\Omega.
\end{aligned}\right.
\end{align*}
Therefore, by the definition of the boundary corrector,
\begin{align*}
\left\{\begin{aligned}
A\left(\frac{\cdot}{\eps}\right):D^2\left(d_{\eps}+\eps^2\theta_{\eps}\right) &= \eps F_{\eps}\quad\;\text{in }\Omega,\\
\hfill d_{\eps}+\eps^2\theta_{\eps} &=0\quad\;\;\;\text{ on }\partial\Omega.
\end{aligned}\right.
\end{align*}
We conclude using the estimate from Theorem 2.2 that
\begin{align*}
\|d_{\eps}+\eps^2\theta_{\eps}\|_{W^{2,p}(\Omega)}\lesssim \eps\|F_{\eps}\|_{L^p(\Omega)} \lesssim  \eps\|u_0\|_{W^{4,p}(\Omega)},
\end{align*}
and \eqref{7} holds.
\end{proof}

The following theorem shows that if $u_0\in W^{4,p}(\Omega)\cap W^{2,\infty}(\Omega)$, then we can absorb the term involving the boundary corrector into the right-hand side at the cost of powers of $\eps$.

\begin{theorem}[Corrector estimate II]
Assume either that $(\Omega,A,f)\in \calG^{2,p}$ for some $p\in (1,\infty)$, or that $(\Omega,A,f)\in \calH^{2}$ and $p=2$. Let $\eps\in (0,1]$ and assume that
\begin{align}\label{addregu0}
u_0\in W^{4,p}(\Omega)\cap W^{2,\infty}(\Omega).
\end{align} 
Then, 
\begin{align*}
\left\|u_{\eps}-u_0-\eps^2\sum_{i,j=1}^n \chi_{ij}\left(\frac{\cdot}{\eps}\right)\partial^2_{ij} u_0\right\|_{W^{2,p}(\Omega)}\lesssim \eps^{\frac{1}{p}}\|u_0\|_{W^{2,\infty}(\Omega)} + \eps\|u_0\|_{W^{4,p}(\Omega)}.
\end{align*}
\end{theorem}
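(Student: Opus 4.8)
The plan is to use Corrector estimate I (Theorem 2.4) as the starting point, so that the only task is to control the boundary corrector term $\eps^2\theta_\eps$ in $W^{2,p}(\Omega)$. By the triangle inequality,
\begin{align*}
\left\|u_{\eps}-u_0-\eps^2\sum_{i,j=1}^n \chi_{ij}\left(\tfrac{\cdot}{\eps}\right)\partial^2_{ij} u_0\right\|_{W^{2,p}(\Omega)}
\lesssim \eps\|u_0\|_{W^{4,p}(\Omega)} + \eps^2\|\theta_\eps\|_{W^{2,p}(\Omega)},
\end{align*}
so it suffices to show $\eps^2\|\theta_\eps\|_{W^{2,p}(\Omega)}\lesssim \eps^{1/p}\|u_0\|_{W^{2,\infty}(\Omega)}$.

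The key step is a boundary-layer estimate for $\theta_\eps$. Since $\theta_\eps$ solves $A(\cdot/\eps):D^2\theta_\eps=0$ in $\Omega$ with $\theta_\eps = -\sum_{ij}\chi_{ij}(\cdot/\eps)\partial^2_{ij}u_0$ on $\partial\Omega$, I would first construct a suitable extension/cutoff function: let $\eta_\eps\in C^\infty_c(\R^n)$ be a cutoff that equals $1$ on a neighbourhood of $\partial\Omega$, is supported in an $O(\eps)$-tube around $\partial\Omega$, and satisfies $\|D^k\eta_\eps\|_{L^\infty}\lesssim \eps^{-k}$. Set $w_\eps:=-\eta_\eps\sum_{ij}\chi_{ij}(\cdot/\eps)\partial^2_{ij}u_0$, which agrees with the boundary data of $\theta_\eps$ on $\partial\Omega$ and is supported in a set of measure $O(\eps)$. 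Because $\chi_{ij}\in C^{2,\alpha}$ is bounded together with its derivatives, one has $\|D^2 w_\eps\|_{L^p(\Omega)}\lesssim \eps^{-2+1/p}\|u_0\|_{W^{2,\infty}(\Omega)}$ (the worst term being the two derivatives landing on $\chi_{ij}(\cdot/\eps)$, each producing a factor $\eps^{-1}$, times the $L^p$-norm of a function supported on a set of measure $O(\eps)$, which gives $\eps^{1/p}$). Then $\theta_\eps - w_\eps$ solves $A(\cdot/\eps):D^2(\theta_\eps-w_\eps) = -A(\cdot/\eps):D^2 w_\eps$ in $\Omega$ with zero boundary data, and Theorem 2.2 gives $\|\theta_\eps - w_\eps\|_{W^{2,p}(\Omega)}\lesssim \|D^2 w_\eps\|_{L^p(\Omega)}\lesssim \eps^{-2+1/p}\|u_0\|_{W^{2,\infty}(\Omega)}$, with $\eps$-independent constant. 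Combining, $\|\theta_\eps\|_{W^{2,p}(\Omega)}\lesssim \eps^{-2+1/p}\|u_0\|_{W^{2,\infty}(\Omega)}$, hence $\eps^2\|\theta_\eps\|_{W^{2,p}(\Omega)}\lesssim \eps^{1/p}\|u_0\|_{W^{2,\infty}(\Omega)}$, which is exactly what is needed.

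The main obstacle I anticipate is the construction of the $O(\eps)$-tubular cutoff $\eta_\eps$ with controlled derivatives and the verification that the boundary values of $w_\eps$ match those of $\theta_\eps$ — this uses the $C^{2,\gamma}$ regularity of $\partial\Omega$ (a tubular neighbourhood exists and the signed distance function is smooth near $\partial\Omega$), so $\eta_\eps$ can be taken as a fixed smooth function of $\dist(\cdot,\partial\Omega)/\eps$. A secondary technical point is bookkeeping the Leibniz expansion of $D^2 w_\eps$: terms with zero, one, or two derivatives on $\eta_\eps$ all arise, but each missing derivative on $\eta_\eps$ is compensated either by a derivative on $\chi_{ij}(\cdot/\eps)$ (factor $\eps^{-1}$) or harmlessly by a derivative on $\partial^2_{ij}u_0$ (which requires only $u_0\in W^{2,\infty}$ for the cutoff region, but one must be slightly careful since third derivatives of $u_0$ are not assumed bounded — this is why the cutoff is chosen to put derivatives on $\eta_\eps$ rather than on $u_0$, keeping only the factor $\|u_0\|_{W^{2,\infty}(\Omega)}$). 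Once the measure estimate $|\supp\eta_\eps|\lesssim\eps$ and the derivative bounds $\|D^k\eta_\eps\|_\infty\lesssim\eps^{-k}$ are in hand, the rest is the routine estimate sketched above, and the factor $\eps^{1/p}$ emerges precisely from the $L^p$-norm over the thin tube.
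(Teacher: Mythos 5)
Your overall strategy is exactly the paper's: start from Corrector estimate I, cut off the boundary data with an $\eps$-tube cutoff ($w_\eps$ is, up to sign, the paper's correction of $\theta_\eps$ to $\tilde\theta_\eps=\theta_\eps-w_\eps$), and apply the uniform $W^{2,p}$ estimate of Theorem 2.2 to $\theta_\eps-w_\eps$, which has zero boundary values. However, there is a genuine flaw in the key estimate you assert, namely $\|D^2 w_\eps\|_{L^p(\Omega)}\lesssim \eps^{-2+1/p}\|u_0\|_{W^{2,\infty}(\Omega)}$, and consequently in the claim $\eps^2\|\theta_\eps\|_{W^{2,p}(\Omega)}\lesssim \eps^{1/p}\|u_0\|_{W^{2,\infty}(\Omega)}$. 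The Leibniz expansion of $\partial^2_{ij}\bigl(\eta_\eps\,\chi_{kl}(\cdot/\eps)\,\partial^2_{kl}u_0\bigr)$ unavoidably contains the terms
\begin{align*}
\partial_i\eta_\eps\,\chi_{kl}\left(\tfrac{\cdot}{\eps}\right)\partial^3_{jkl}u_0,\qquad
\eta_\eps\,\tfrac{1}{\eps}\,\partial_i\chi_{kl}\left(\tfrac{\cdot}{\eps}\right)\partial^3_{jkl}u_0,\qquad
\eta_\eps\,\chi_{kl}\left(\tfrac{\cdot}{\eps}\right)\partial^4_{ijkl}u_0,
\end{align*}
and there is no freedom to ``put derivatives on $\eta_\eps$ rather than on $u_0$'': the product rule produces all terms, and these particular ones cannot be controlled by $\|u_0\|_{W^{2,\infty}(\Omega)}$, since third and fourth derivatives of $u_0$ are only in $L^p$. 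So your intermediate bound is not justified as stated (and is stronger than what the theorem needs).

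The good news is that the error is benign: those extra terms are bounded in $L^p(\Omega)$ by $\eps^{-1}\|u_0\|_{W^{4,p}(\Omega)}$ (the worst being the middle one), so the correct conclusion of your argument is
\begin{align*}
\|D^2 w_\eps\|_{L^p(\Omega)}\lesssim \eps^{-2+\frac{1}{p}}\|u_0\|_{W^{2,\infty}(\Omega)}+\eps^{-1}\|u_0\|_{W^{4,p}(\Omega)},
\end{align*}
and the same bound is needed (and holds, by the same bookkeeping) for $\|w_\eps\|_{W^{2,p}(\Omega)}$ itself, which you use implicitly when passing from $\|\theta_\eps-w_\eps\|_{W^{2,p}(\Omega)}$ to $\|\theta_\eps\|_{W^{2,p}(\Omega)}$ by the triangle inequality. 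After multiplying by $\eps^2$ this gives $\eps^{1/p}\|u_0\|_{W^{2,\infty}(\Omega)}+\eps\|u_0\|_{W^{4,p}(\Omega)}$, which combined with Theorem 2.4 is precisely the asserted estimate; with this correction your argument coincides with the paper's proof (the three displayed terms are exactly the paper's $S_2$ and $S_3$ contributions).
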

\begin{proof}
Let $\eta\in C_c^{\infty}(\R^n)$ be a cut-off function with $0\leq \eta\leq 1$,
\begin{align*}
\eta &\equiv 1\quad\text{in }\left\{x\in\Omega: \mathrm{dist}(x,\partial\Omega)<\frac{\eps}{2}\right\},\\
\eta &\equiv 0\quad\text{in }\left\{x\in\Omega: \mathrm{dist}(x,\partial\Omega)\geq\eps\right\},
\end{align*}
and let $\eta$ satisfy 
\begin{align*}
\lvert \nabla \eta\rvert+\eps\lvert D^2 \eta\rvert\lesssim \frac{1}{\eps}\quad\text{in }\Omega.
\end{align*}
We introduce the function
\begin{align*}
\tilde{\theta}_{\eps}:= \theta_{\eps}+\eta \sum_{i,j=1}^n \chi_{ij}\left(\frac{\cdot}{\eps}\right)\partial_{ij}^2u_0,
\end{align*}
and verify that 
\begin{align*}
A&\left(\frac{\cdot}{\eps}\right):D^2\tilde{\theta}_{\eps}=\sum_{i,j,k,l=1}^n a_{ij}\left(\frac{\cdot}{\eps}\right)\partial^2_{ij}\left(\eta\; \chi_{kl}\left(\frac{\cdot}{\eps}\right)\partial_{kl}^2u_0\right)=\frac{1}{\eps^2}S_1 + \frac{1}{\eps}S_2 + S_3,
\end{align*}
where $S_1,S_2$ and $S_3$ are given by
\begin{align*}
S_1&:=\sum_{i,j,k,l=1}^n a_{ij}\left(\frac{\cdot}{\eps}\right)\eta\;\partial^2_{ij}\chi_{kl}\left(\frac{\cdot}{\eps}\right)\partial^2_{kl}u_0,\\
S_2&:=2\sum_{i,j,k,l=1}^n a_{ij}\left(\frac{\cdot}{\eps}\right)\left(\partial_i \eta\; \partial_j \chi_{kl}\left(\frac{\cdot}{\eps}\right)\partial^2_{kl}u_0+\eta\; \partial_i\chi_{kl}\left(\frac{\cdot}{\eps}\right)\partial^3_{jkl}u_0\right),\\
S_3&:=\sum_{i,j,k,l=1}^n a_{ij}\left(\frac{\cdot}{\eps}\right)\left(\partial^2_{ij}\eta\; \chi_{kl}\left(\frac{\cdot}{\eps}\right)\partial^2_{kl}u_0+2\partial_i \eta\;\chi_{kl}\left(\frac{\cdot}{\eps}\right) \partial^3_{jkl}u_0 +\eta\; \chi_{kl}\left(\frac{\cdot}{\eps}\right)\partial^4_{ijkl}u_0\right).
\end{align*}
Therefore, $\tilde{\theta}_{\eps}$ satisfies 
\begin{align*}
\left\{\begin{aligned}
A\left(\frac{\cdot}{\eps}\right):D^2\tilde{\theta}_{\eps} &= \frac{1}{\eps^2}S_1 + \frac{1}{\eps}S_2 + S_3\quad\text{ in }\Omega,\\
\hspace{1.9cm}\tilde{\theta}_{\eps} &= 0\hspace{3.4cm}\text{on }\partial\Omega.
\end{aligned}\right.
\end{align*}
Since $u_0\in W^{4,p}(\Omega)\cap W^{2,\infty}(\Omega)$ by assumption, the right-hand side belongs to $L^p(\Omega)$, and we have by Theorem 2.2 that
\begin{align*}
\left\|\tilde{\theta}_{\eps}\right\|_{W^{2,p}(\Omega)}\lesssim \frac{1}{\eps^2}\|S_1\|_{L^p(\Omega)} + \frac{1}{\eps}\|S_2\|_{L^p(\Omega)} + \|S_3\|_{L^p(\Omega)}.
\end{align*}
We look at the terms on the right-hand side separately and start with $S_1$. Using the boundedness of $A$ and the fact that $\chi_{ij}\in W^{2,\infty}(\R^n)$, we have
\begin{align*}
\|S_1\|_{L^p(\Omega)}&=\left\|\sum_{i,j,k,l=1}^n a_{ij}\left(\frac{\cdot}{\eps}\right)\eta\;\partial^2_{ij}\chi_{kl}\left(\frac{\cdot}{\eps}\right)\partial^2_{kl}u_0  \right\|_{L^p(\Omega)}\\
&\lesssim  \|u_0\|_{W^{2,\infty}(\Omega)}  \|\eta\|_{L^p(\Omega)}\\
&\lesssim \lvert\left\{x\in\Omega: \mathrm{dist}(x,\partial\Omega)<\eps\right\}   \rvert^{\frac{1}{p}}\;\|u_0\|_{W^{2,\infty}(\Omega)}\\ &\lesssim \eps^{\frac{1}{p}}\|u_0\|_{W^{2,\infty}(\Omega)}.
\end{align*}
For $S_2$, we obtain similarly that
\begin{align*}
\|S_2\|_{L^p(\Omega)}&=\left\|2\sum_{i,j,k,l=1}^n a_{ij}\left(\frac{\cdot}{\eps}\right)\left(\partial_i \eta\; \partial_j \chi_{kl}\left(\frac{\cdot}{\eps}\right)\partial^2_{kl}u_0+\eta\; \partial_i\chi_{kl}\left(\frac{\cdot}{\eps}\right)\partial^3_{jkl}u_0\right)\right\|_{L^p(\Omega)}\\ 
&\lesssim \|\nabla \eta\|_{L^p(\Omega)}\|u_0\|_{W^{2,\infty}(\Omega)}+\| \eta\|_{L^{\infty}(\Omega)}\|u_0\|_{W^{4,p}(\Omega)}  \\
&\lesssim \frac{1}{\eps}\;\lvert\left\{x\in\Omega: \mathrm{dist}(x,\partial\Omega)<\eps\right\}   \rvert^{\frac{1}{p}} \;\|u_0\|_{W^{2,\infty}(\Omega)} +\|u_0\|_{W^{4,p}(\Omega)}\\
&\lesssim \frac{1}{\eps^{1-\frac{1}{p}}} \|u_0\|_{W^{2,\infty}(\Omega)} +\|u_0\|_{W^{4,p}(\Omega)}. 
\end{align*}
Finally, for $S_3$, we have that
\begin{align*}
\|S_3\|_{L^p(\Omega)}&=\left\|\sum_{i,j,k,l=1}^n a_{ij}\left(\frac{\cdot}{\eps}\right)\left(\partial^2_{ij}\eta\; \chi_{kl}\left(\frac{\cdot}{\eps}\right)\partial^2_{kl}u_0  \right.\right. \\ &\hspace{3cm}\left.\left.+2\partial_i \eta\;\chi_{kl}\left(\frac{\cdot}{\eps}\right) \partial^3_{jkl}u_0 +\eta\; \chi_{kl}\left(\frac{\cdot}{\eps}\right)\partial^4_{ijkl}u_0\right)\right\|_{L^p(\Omega)}\\
&\lesssim \|D^2 \eta\|_{L^p(\Omega)}\|u_0\|_{W^{2,\infty}(\Omega)}+\left(\|\nabla\eta\|_{L^{\infty}(\Omega)} + \|\eta\|_{L^{\infty}(\Omega)} \right)\|u_0\|_{W^{4,p}(\Omega)}\\
&\lesssim \frac{1}{\eps^2}\;\lvert\left\{x\in\Omega: \mathrm{dist}(x,\partial\Omega)<\eps\right\}   \rvert^{\frac{1}{p}} \;\|u_0\|_{W^{2,\infty}(\Omega)}+\frac{1}{\eps} \|u_0\|_{W^{4,p}(\Omega)}\\
&\lesssim \frac{1}{\eps^{2-\frac{1}{p}}} \|u_0\|_{W^{2,\infty}(\Omega)} + \frac{1}{\eps} \|u_0\|_{W^{4,p}(\Omega)}.
\end{align*}
Altogether, we have shown that
\begin{align*}
\left\|\tilde{\theta}_{\eps}\right\|_{W^{2,p}(\Omega)}&\lesssim \left(\frac{ \eps^{\frac{1}{p}}}{\eps^2}+ \frac{1}{\eps\cdot \eps^{1-\frac{1}{p}}}+\frac{1}{\eps^{2-\frac{1}{p}}}\right)\|u_0\|_{W^{2,\infty}(\Omega)}+\left(\frac{1}{\eps}+\frac{1}{\eps} \right)\|u_0\|_{W^{4,p}(\Omega)}\\
&\lesssim \frac{1}{\eps^{2-\frac{1}{p}}}\|u_0\|_{W^{2,\infty}(\Omega)}+\frac{1}{\eps}  \|u_0\|_{W^{4,p}(\Omega)}. 
\end{align*}
By direct computation, using the bounds 
\begin{align*}
\|\eta\|_{L^p(\Omega)}\lesssim \eps^{\frac{1}{p}},\quad \|\nabla \eta\|_{L^p(\Omega)}\lesssim \frac{1}{\eps^{1-\frac{1}{p}}},\quad \|D^2 \eta\|_{L^p(\Omega)}\lesssim \frac{1}{\eps^{2-\frac{1}{p}}},
\end{align*}
we can show that
\begin{align*}
\left\|\eta \sum_{i,j=1}^n \chi_{ij}\left(\frac{\cdot}{\eps}\right)\partial_{ij}^2u_0  \right\|_{W^{2,p}(\Omega)}\lesssim \frac{1}{\eps^{2-\frac{1}{p}}}\|u_0\|_{W^{2,\infty}(\Omega)}+\frac{1}{\eps}  \|u_0\|_{W^{4,p}(\Omega)}.
\end{align*}
Therefore, using the triangle inequality, we obtain that
\begin{align*}
\|\theta_{\eps}\|_{W^{2,p}(\Omega)}&\lesssim \frac{1}{\eps^{2-\frac{1}{p}}}\|u_0\|_{W^{2,\infty}(\Omega)}+\frac{1}{\eps} \|u_0\|_{W^{4,p}(\Omega)}.
\end{align*}
We conclude that
\begin{align*}
\|\eps^2\theta_{\eps}\|_{W^{2,p}(\Omega)}&\lesssim \eps^{\frac{1}{p}}\|u_0\|_{W^{2,\infty}(\Omega)} + \eps\|u_0\|_{W^{4,p}(\Omega)}. 
\end{align*}
The claim now follows from \eqref{7}.
\end{proof}

Let us remark that $W^{4,p}(\Omega)\hookrightarrow W^{2,\infty}(\Omega)$ for $p>\frac{n}{2}$, i.e., assumption \eqref{addregu0} is a consequence of $u_0\in W^{4,p}(\Omega)$; in particular, for dimensions $n\in \{2,3\}$ and $p=2$, one can replace condition \eqref{addregu0} by $u_0\in H^4(\Omega)$.

Let us recall that $u_0$ is the solution to the elliptic constant-coefficient problem \eqref{1,4}. For bounded convex polygonal domains ($n=2$), $u_0\in H^4(\Omega)$ can be ensured by assuming that $f\in H^2(\Omega)$ satisfies certain compatibility conditions at the corners of the domain. In the case of Poisson's equation on $\Omega=(0,1)^2$, a necessary and sufficient condition for $u_0\in H^4(\Omega)\cap H^1_0(\Omega)$ is that $f\in H^2(\Omega)$ and $f=0$ at the corners of $\Omega$, see \cite{HO14}. We note that these conditions are satisfied for functions $f\in H^2(\Omega)$ such that $\supp(f)\Subset \Omega$, see \cite{Gri11}.

\section{The Numerical Scheme}

\subsection{Numerical Homogenization Scheme}
The first step is to approximate the invariant measure.
\subsubsection{Approximation of $m$}

For the approximation of the invariant measure $m$, we consider a shape-regular triangulation of $Y$ into triangles with longest edge $h>0$ and let 
\begin{align*}
\tilde{M}_h\subset W_{\text{per}}(Y)=\left\{v\in H^1_{per}(Y):\int_Y v = 0\right\}
\end{align*}
be the finite-dimensional subspace of $W_{\text{per}}(Y)$ consisting of continuous $Y$-periodic piecewise linear functions on the triangulation with zero mean over $Y$. We assume that  
\begin{align*}
W_{\text{per}}(Y)= \overline{\bigcup_{h>0} \tilde{M}_h}.
\end{align*}
Then we have the following approximation result for $m$.
\begin{theorem}[Approximation of the invariant measure]
Let $A=A^{\mathrm{T}}:\R^n\rightarrow \R^{n\times n}$ satisfy \eqref{Aass}. Then, for $h>0$ sufficiently small, there exists a unique $\tilde{m}_h\in\tilde{M}_h$ such that
\begin{align}\label{8}
\int_Y \left(A\nabla \tilde{m}_h+\tilde{m}_h\;\div A\right)\cdot \nabla \fhi_h=-\int_Y (\div A)\cdot \nabla \fhi_h\quad\quad\forall\,\fhi_h\in\tilde{M}_h,
\end{align}
and writing 
\begin{align*}
m_h:=\tilde{m}_h+1,
\end{align*}
we have that
\begin{align*}
\|m-m_h\|_{L^2(Y)}+h\|m-m_h\|_{H^1(Y)}\lesssim h \inf_{\tilde{v}_h\in  \tilde{M}_h} \|m-(\tilde{v}_h+1)\|_{H^1(Y)},
\end{align*}
where $m$ is the invariant measure, as defined in Lemma 2.1.
\end{theorem}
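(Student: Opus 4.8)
The plan is to recognize \eqref{8} as a conforming Galerkin discretization of a non-coercive but uniquely solvable variational problem for $\tilde m:=m-1\in W_{\text{per}}(Y)$, and then to run the Schatz/Aubin--Nitsche argument for indefinite bilinear forms. \emph{Variational reformulation.} Define $a\colon W_{\text{per}}(Y)\times W_{\text{per}}(Y)\to\R$ by $a(v,w):=\int_Y\big(A\nabla v+v\,\div A\big)\cdot\nabla w$ and $\ell(w):=-\int_Y(\div A)\cdot\nabla w$. Since $\div(Av)=A\nabla v+v\,\div A$ and $D^2\!:\!(Av)=\div(\div(Av))$, the equation $D^2\!:\!(Am)=0$ of Lemma 2.1 is equivalent to $\int_Y\div(Am)\cdot\nabla w=0$ for all $w\in H^1_{\text{per}}(Y)$; substituting $m=\tilde m+1$ (constant test functions being trivial) this reads $a(\tilde m,w)=\ell(w)$ for all $w\in W_{\text{per}}(Y)$, with $\tilde m$ the unique such solution by Lemma 2.1. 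Equation \eqref{8} is exactly $a(\tilde m_h,w_h)=\ell(w_h)$ for all $w_h\in\tilde M_h$ with $\tilde m_h=m_h-1$, so Galerkin orthogonality $a(\tilde m-\tilde m_h,w_h)=0$ holds on $\tilde M_h$. Since $\|m-m_h\|_{H^k(Y)}=\|\tilde m-\tilde m_h\|_{H^k(Y)}$ and $\inf_{\tilde v_h\in\tilde M_h}\|m-(\tilde v_h+1)\|_{H^1(Y)}=\inf_{v_h\in\tilde M_h}\|\tilde m-v_h\|_{H^1(Y)}$, it suffices to show $\|\tilde m-\tilde m_h\|_{L^2(Y)}+h\|\tilde m-\tilde m_h\|_{H^1(Y)}\lesssim h\inf_{v_h\in\tilde M_h}\|\tilde m-v_h\|_{H^1(Y)}$.

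\emph{Analytic ingredients.} (i) \emph{Continuity:} $|a(v,w)|\lesssim\|v\|_{H^1(Y)}\|w\|_{H^1(Y)}$, using $A\in L^\infty$, Hölder's inequality, and the subcritical Sobolev embedding $H^1(Y)\hookrightarrow L^s(Y)$ with $\frac1s=\frac12-\frac1q$ (admissible since $q>n$) to bound $\int_Y v\,(\div A)\cdot\nabla w\le\|\div A\|_{L^q}\|v\|_{L^s}\|\nabla w\|_{L^2}$. (ii) \emph{Gårding's inequality:} for every $v\in H^1_{\text{per}}(Y)$, $a(v,v)\ge\frac\lambda2\|\nabla v\|_{L^2(Y)}^2-C_0\|v\|_{L^2(Y)}^2$, from uniform ellipticity, the same Hölder bound, the Gagliardo--Nirenberg interpolation $\|v\|_{L^s}\lesssim\|v\|_{L^2}^{1-\theta}\|\nabla v\|_{L^2}^{\theta}$ with $\theta=n/q<1$, and Young's inequality (the exponent $1+\theta<2$ on $\|\nabla v\|_{L^2}$ is what permits absorption). (iii) \emph{Transpose problem:} an integration by parts gives $a(v,w)=-\int_Y v\,(A\!:\!D^2 w)$ for smooth periodic $w$, so the form transposed to $a$ corresponds to the torus operator $A\!:\!D^2$; its periodic kernel is $\R$ by the strong maximum principle, while the periodic kernel of $D^2\!:\!(A\cdot)$ is $\R m$ by Lemma 2.1, so $a$ and its transpose are both injective on $W_{\text{per}}(Y)$, hence bijective by the Fredholm alternative applied via (ii). Moreover, for every $g\in L^2(Y)$ with $\langle g,m\rangle_{L^2(Y)}=0$ the solution $z\in W_{\text{per}}(Y)$ of $A\!:\!D^2 z=g$ satisfies $z\in H^2(Y)$ with $\|z\|_{H^2(Y)}\lesssim\|g\|_{L^2(Y)}$: multiply by $m$ and use the transformation of Section 2.2 on $Y$, namely $\nabla\cdot(A^{\div}\nabla z)=mg$ with $\div A^{\div}=0$ and $A^{\div}$ Hölder, then apply interior $W^{2,2}$ estimates on a finite cover of $Y$.

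\emph{Schatz argument and conclusion.} Let $\mathcal I_h$ be a Scott--Zhang quasi-interpolation onto $\tilde M_h$ (correcting the mean if needed), so $\|w-\mathcal I_h w\|_{H^1(Y)}\lesssim h\|w\|_{H^2(Y)}$. For $v_h\in\tilde M_h$, let $z\in W_{\text{per}}(Y)$ solve the transpose problem with an $L^2$-datum built from $v_h$ (its component orthogonal to $m$), so $\|z\|_{H^2(Y)}\lesssim\|v_h\|_{L^2(Y)}$ and $a(w_h,z)=\langle v_h,w_h\rangle_{L^2(Y)}$ on $\tilde M_h$; writing $\|v_h\|_{L^2}^2=a(v_h,z)=a(v_h,z-\mathcal I_h z)+a(v_h,\mathcal I_h z)$ and using (i) yields $\|v_h\|_{L^2(Y)}\lesssim h\|v_h\|_{H^1(Y)}+\sigma(v_h)$ with $\sigma(v_h):=\sup_{0\ne w_h\in\tilde M_h}a(v_h,w_h)/\|w_h\|_{H^1(Y)}$. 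Inserting this into (ii) for $v_h$, using the Poincaré inequality on $W_{\text{per}}(Y)$, and choosing $h\le h_0$ with $h_0$ depending only on $\lambda$, $C_0$, the constants in (i), (iii) and the interpolation constant (all $h$-independent), one absorbs the $O(h^2)\|v_h\|_{H^1}^2$ term and obtains the discrete inf--sup bound $\|v_h\|_{H^1(Y)}\lesssim\sigma(v_h)$ for all $v_h\in\tilde M_h$. This gives existence and uniqueness of $\tilde m_h$ and, with (i) and Galerkin orthogonality, the quasi-optimal estimate $\|\tilde m-\tilde m_h\|_{H^1(Y)}\lesssim\inf_{v_h\in\tilde M_h}\|\tilde m-v_h\|_{H^1(Y)}$. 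Finally, for $e:=\tilde m-\tilde m_h$, let $\psi\in W_{\text{per}}(Y)$ solve the transpose problem with datum built from $e$, so $\|\psi\|_{H^2(Y)}\lesssim\|e\|_{L^2(Y)}$; by Galerkin orthogonality and (i), $\|e\|_{L^2(Y)}^2=a(e,\psi-\mathcal I_h\psi)\lesssim h\|e\|_{H^1(Y)}\|\psi\|_{H^2(Y)}\lesssim h\|e\|_{H^1(Y)}\|e\|_{L^2(Y)}$, hence $\|e\|_{L^2(Y)}\lesssim h\|e\|_{H^1(Y)}$; combining the last two displays proves the claim. \textbf{The main obstacle} is that $a$ is neither symmetric nor coercive (the first-order coefficient $\div A$ lies only in $L^q$), so the whole argument hinges on the Schatz/Aubin--Nitsche machinery; its two delicate points are making the Gårding and continuity estimates work with $\div A\in L^q$, $q>n$, through sharp Sobolev and Gagliardo--Nirenberg inequalities, and establishing a uniform $H^2(Y)$-regularity bound for the torus problem $A\!:\!D^2 z=g$, for which one recycles the divergence-form transformation of Section 2.2.
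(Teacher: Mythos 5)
Your proposal follows essentially the same route as the paper: the same bilinear form $a(v,w)=\int_Y\left(A\nabla v+v\,\div A\right)\cdot\nabla w$, continuity and a G{\aa}rding inequality obtained from H\"older, Sobolev embedding and Gagliardo--Nirenberg (using $q>n$), Schatz's argument for the resulting non-coercive Galerkin method, and an Aubin--Nitsche duality step resting on $H^2(Y)$ regularity for the adjoint cell problem $A:D^2z=g$ under the solvability condition $\langle g,m\rangle_{L^2(Y)}=0$. Packaging the first half as a discrete inf--sup condition on $\tilde M_h$ for $h$ small, instead of the paper's direct estimate on the error $\tilde m-\tilde m_h$, is a harmless variant and yields existence and uniqueness of $\tilde m_h$ in the same breath; your $H^2$ regularity argument for the adjoint problem is also consistent with the paper (note that since $\div A^{\div}=0$ the divergence-form reduction is literally the nondivergence-form equation $A^{\div}:D^2z=mg$ with continuous coefficients, which is where the interior $W^{2,2}$ estimates apply).

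The one loose point is the choice of dual datum. If ``the component of $v_h$ (resp.\ $e$) orthogonal to $m$'' means the $L^2$-orthogonal projection onto $\{m\}^{\perp}$, the identities you assert, $a(w_h,z)=\langle v_h,w_h\rangle_{L^2(Y)}$ and $\|e\|_{L^2(Y)}^2=a(e,\psi-\mathcal{I}_h\psi)$, do not hold: one only gets the pairing against the projected datum, e.g.
\begin{align*}
a(e,\psi)=\|e\|_{L^2(Y)}^2-\frac{\langle e,m\rangle_{L^2(Y)}^2}{\|m\|_{L^2(Y)}^2},
\end{align*}
and one would still need the (true, but unproved in your text) fact that this quantity controls $\|e\|_{L^2(Y)}^2$ uniformly on mean-zero functions. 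Two clean repairs: subtract a constant instead, i.e.\ take $g:=e-\langle e,m\rangle_{L^2(Y)}$, which satisfies $\langle g,m\rangle_{L^2(Y)}=0$ because $\int_Y m=1$ and pairs with zero-mean test functions exactly as $e$ does, so your identities become exact (same fix for the $v_h$ step); or use the paper's datum $(\tilde m-\tilde m_h)/m$, which satisfies the solvability condition automatically and gives $\langle g,e\rangle_{L^2(Y)}\geq M^{-1}\|e\|_{L^2(Y)}^2$ directly from \eqref{mbounds}. With either repair the argument is complete and matches the paper's.
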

\begin{remark}
In particular, since 
\begin{align*}
\inf_{\tilde{v}_h\in  \tilde{M}_h} \|m-(\tilde{v}_h+1)\|_{H^1(Y)} = o(1),
\end{align*}
we have that
\begin{align*}
m_h\rightarrow m\quad\text{in }H^1(Y)
\end{align*}
as $h$ tends to zero.
\end{remark}
\begin{proof}[Proof of Theorem 3.1]
We observe that $m=\tilde{m}+1$, where $\tilde{m}$ is the unique solution to the problem
\begin{align*}
\begin{cases}
-\nabla\cdot \left(A\nabla \tilde{m}+\tilde{m}\;\div A\right)=\nabla\cdot (\div A)\quad\text{in }Y,\\
\tilde{m}\;\text{is $Y$-periodic},\; \int_Y \tilde{m} = 0,
\end{cases}
\end{align*}
i.e.,
\begin{align*}
\tilde{m}\in W_{\text{per}}(Y),\quad a(\tilde{m},\fhi)=-\int_Y (\div A)\cdot \nabla \fhi\quad\quad\forall\,\fhi\in W_{\text{per}}(Y),
\end{align*}
where
\begin{align*}
a:W_{\text{per}}(Y)\times W_{\text{per}}(Y)\longrightarrow\R,\quad a(u,v):=\int_Y A\nabla u\cdot \nabla v +\int_Y u(\div A)\cdot \nabla v.
\end{align*}
We further observe that \eqref{8} is equivalent to
\begin{align}\label{8,5}
\tilde{m}_h\in \tilde{M}_h,\quad a(\tilde{m}_h,\fhi_h)=-\int_Y (\div A)\cdot \nabla \fhi_h\quad\quad\forall\,\fhi_h\in \tilde{M}_h.
\end{align}
We start by showing boundedness of $a$ and a G{\aa}rding-type inequality. We claim that there exist constants $C_b,C_g>0$ such that
\begin{align}\label{9}
\lvert a(u,v)\rvert\leq C_b \|u\|_{H^1(Y)}\|v\|_{H^1(Y)}\quad\quad\forall\, u,v\in W_{\text{per}}(Y),
\end{align}
and
\begin{align}\label{10}
a(u,u)\geq \frac{\lambda}{2} \|u\|_{H^1(Y)}^2-C_g \|u\|_{L^2(Y)}^2\quad\quad\forall\, u\in W_{\text{per}}(Y).
\end{align}

Let us first show \eqref{9}. For $u,v\in W_{\text{per}}(Y)$, by Hölder's inequality and Sobolev embeddings (note that, according to \eqref{Aass}, $q>n$), we have that
\begin{align*}
\left\lvert \int_Y u(\div A)\cdot \nabla v \right\rvert \leq \|\div A\|_{L^q(Y)}\|u\|_{L^{\frac{2q}{q-2}}(Y)}\|\nabla v\|_{L^2(Y)}\lesssim \|u\|_{H^1(Y)}\|v\|_{H^1(Y)}.
\end{align*}
Using the fact that $A\in W^{1,q}(Y)\hookrightarrow L^{\infty}(Y)$ since $q>n$, we obtain the bound
\begin{align*}
\lvert a(u,v)\rvert\leq \left\lvert \int_Y A\nabla u\cdot \nabla v \right\rvert+   \left\lvert \int_Y u(\div A)\cdot \nabla v \right\rvert\lesssim \|u\|_{H^1(Y)}\|v\|_{H^1(Y)}
\end{align*}
for any $u,v\in W_{\text{per}}(Y)$, i.e. \eqref{9} holds.

Let us now show the estimate \eqref{10}. For $u\in W_{\text{per}}(Y)$, by ellipticity and Hölder's inequality, we have
\begin{align*}
a(u,u) &=\int_Y A\nabla u\cdot \nabla u +\int_Y u(\div A)\cdot \nabla u\\
&\geq \lambda \|\nabla u\|_{L^2(Y)}^2-\|\div A\|_{L^q(Y)}\|u\|_{L^{\frac{2q}{q-2}}(Y)}\|\nabla u\|_{L^2(Y)}.
\end{align*}
For the second term we use the Gagliardo--Nirenberg inequality and Young's inequality to obtain
\begin{align*}
\|\div A\|_{L^q(Y)}\|u\|_{L^{\frac{2q}{q-2}}(Y)}\|\nabla u\|_{L^2(Y)}&\leq C(q,n)\|\div A\|_{L^q(Y)} \|u\|_{L^{2}(Y)}^{1-\frac{n}{q}}\|\nabla u\|_{L^2(Y)}^{1+\frac{n}{q}}\\
&\leq \frac{\lambda}{2}\|\nabla u\|_{L^2(Y)}^{2}+C(q,n,\lambda,\|\div A\|_{L^q(Y)})\|u\|_{L^2(Y)}^{2}.
\end{align*}
Therefore, we have
\begin{align*}
a(u,u)&\geq  \frac{\lambda}{2}\|\nabla u\|_{L^2(Y)}^{2} - C(q,n,\lambda,\|\div A\|_{L^q(Y)})\|u\|_{L^2(Y)}^{2}\\
&=\frac{\lambda}{2}\| u\|_{H^1(Y)}^{2}-\left(\frac{\lambda}{2}+C(q,n,\lambda,\|\div A\|_{L^q(Y)})  \right)\|u\|_{L^2(Y)}^{2}
\end{align*}
for any $u\in W_{\text{per}}(Y)$, i.e., \eqref{10} holds with 
\begin{align*}
C_g:=\frac{\lambda}{2}+C(q,n,\lambda,\|\div A\|_{L^q(Y)}).
\end{align*}
We use Schatz's method to derive an a priori estimate; see \cite{Sch74}.

From our G{\aa}rding-type inequality \eqref{10} we see that (note $\tilde{m}-\tilde{m}_h\in W_{\text{per}}(Y)$)
\begin{align}\label{longestimate0}
\begin{split}
\|\tilde{m}-\tilde{m}_h\|_{H^1(Y)}-\frac{2C_g}{\lambda}\|\tilde{m}-\tilde{m}_h\|_{L^2(Y)}&\leq \|\tilde{m}-\tilde{m}_h\|_{H^1(Y)}-\frac{2C_g}{\lambda}\frac{\|\tilde{m}-\tilde{m}_h\|_{L^2(Y)}^2}{\|\tilde{m}-\tilde{m}_h\|_{H^1(Y)}}\\
&\leq \frac{2}{\lambda}\frac{a(\tilde{m}-\tilde{m}_h,\tilde{m}-\tilde{m}_h)}{\|\tilde{m}-\tilde{m}_h\|_{H^1(Y)}}.
\end{split}
\end{align}
By Galerkin-orthogonality and boundedness, we have for any $\tilde{v}_h\in \tilde{M}_h$ that
\begin{align*}
\frac{a(\tilde{m}-\tilde{m}_h,\tilde{m}-\tilde{m}_h)}{\|\tilde{m}-\tilde{m}_h\|_{H^1(Y)}} = \frac{a(\tilde{m}-\tilde{m}_h,\tilde{m}-\tilde{v}_h)}{\|\tilde{m}-\tilde{m}_h\|_{H^1(Y)}} \leq C_b \|\tilde{m}-\tilde{v}_h\|_{H^1(Y)},
\end{align*}
and taking the infimum over all $\tilde{v}_h\in \tilde{M}_h$, we find
\begin{align*}
\frac{a(\tilde{m}-\tilde{m}_h,\tilde{m}-\tilde{m}_h)}{\|\tilde{m}-\tilde{m}_h\|_{H^1(Y)}}\leq C_b \inf_{\tilde{v}_h\in  \tilde{M}_h} \|\tilde{m}-\tilde{v}_h\|_{H^1(Y)}.
\end{align*}
Combining this estimate with \eqref{longestimate0} yields
\begin{align}\label{longestimate}
\|\tilde{m}-\tilde{m}_h\|_{H^1(Y)}-\frac{2C_g}{\lambda}\|\tilde{m}-\tilde{m}_h\|_{L^2(Y)}\leq \frac{2C_b}{\lambda}\inf_{v_h\in  \tilde{M}_h} \|\tilde{m}-\tilde{v}_h\|_{H^1(Y)}.
\end{align}
Next, we use an Aubin--Nitsche-type duality argument.

Let $\phi\in W_{\text{per}}(Y)$ be the unique solution to 
\begin{align}\label{11}
\begin{split}
\begin{cases}
-\nabla\cdot\left(A\nabla\phi\right)+(\div A)\cdot \nabla \phi =\frac{\tilde{m}-\tilde{m}_h}{m}\quad\text{in }Y,\\
\phi\; \text{is $Y$-periodic},\;\int_Y \phi =0.
\end{cases}
\end{split}
\end{align} 
We note that the solvability condition \eqref{solvcond} is satisfied:
\begin{align*}
\int_Y \frac{\tilde{m}-\tilde{m}_h}{m}m=\int_Y (\tilde{m}-\tilde{m}_h)=0.
\end{align*}
We have, using the bounds on the invariant measure \eqref{mbounds}, the weak formulation of \eqref{11} and the symmetry of $A$, that
\begin{align*}
\frac{1}{M}\|\tilde{m}-\tilde{m}_h\|_{L^2(Y)}^2&\leq \int_Y \frac{\tilde{m}-\tilde{m}_h}{m}(\tilde{m}-\tilde{m}_h)\\
&= \int_Y A\nabla \phi\cdot \nabla(\tilde{m}-\tilde{m}_h)+\int_Y (\div A)\cdot \nabla \phi \;(\tilde{m}-\tilde{m}_h)\\
&=\int_Y A\nabla(\tilde{m}-\tilde{m}_h)\cdot \nabla\phi+\int_Y (\tilde{m}-\tilde{m}_h)(\div A)\cdot \nabla \phi.
\end{align*}
Next, we use Galerkin orthogonality, the boundedness \eqref{9} and an interpolation inequality to obtain
\begin{align*}
\frac{1}{M}\|\tilde{m}-\tilde{m}_h\|_{L^2(Y)}^2&\leq a(\tilde{m}-\tilde{m}_h,\phi)\\
&= a(\tilde{m}-\tilde{m}_h,\phi-\mathcal{I}_h\phi)\\
&\lesssim \|\tilde{m}-\tilde{m}_h\|_{H^1(Y)} \|\phi-\mathcal{I}_h\phi\|_{H^1(Y)}\\
&\lesssim h\|\tilde{m}-\tilde{m}_h\|_{H^1(Y)}\|\phi\|_{H^2(Y)},
\end{align*} 
where $\mathcal{I}_h\phi$ denotes the continuous piecewise linear interpolant of $\phi$ on the triangulation.
Finally, by a regularity estimate for $\phi$ and the bounds on the invariant measure \eqref{mbounds}, we arrive at the bound
\begin{align*}
\|\phi\|_{H^2(Y)}\lesssim \left\|\frac{\tilde{m}-\tilde{m}_h}{m}\right\|_{L^2(Y)}\lesssim \left\|\tilde{m}-\tilde{m}_h\right\|_{L^2(Y)},
\end{align*}
which provides us with the estimate
\begin{align*}
\|\tilde{m}-\tilde{m}_h\|_{L^2(Y)}\leq C_0 h \|\tilde{m}-\tilde{m}_h\|_{H^1(Y)}
\end{align*}
for some $C_0>0$. Combining this with \eqref{longestimate} we have
\begin{align*}
\left(1-\frac{2C_gC_0}{\lambda}h\right)\|\tilde{m}-\tilde{m}_h\|_{H^1(Y)}&\leq \|\tilde{m}-\tilde{m}_h\|_{H^1(Y)}-\frac{2C_g}{\lambda}\|\tilde{m}-\tilde{m}_h\|_{L^2(Y)}\\
&\leq \frac{2C_b}{\lambda}\inf_{\tilde{v}_h\in  \tilde{M}_h} \|\tilde{m}-\tilde{v}_h\|_{H^1(Y)}.
\end{align*}
Therefore, for $h$ sufficiently small, we arrive at the bounds
\begin{align*}
\|\tilde{m}-\tilde{m}_h\|_{H^1(Y)}\lesssim \inf_{\tilde{v}_h\in  \tilde{M}_h} \|\tilde{m}-\tilde{v}_h\|_{H^1(Y)},
\end{align*}
and
\begin{align*}
\|\tilde{m}-\tilde{m}_h\|_{L^2(Y)}\leq C_0 h \|\tilde{m}-\tilde{m}_h\|_{H^1(Y)}\lesssim h\inf_{\tilde{v}_h\in  \tilde{M}_h} \|\tilde{m}-\tilde{v}_h\|_{H^1(Y)}.
\end{align*}
We have thus established the a priori estimate
\begin{align*}
\|\tilde{m}-\tilde{m}_h\|_{L^2(Y)} + h \|\tilde{m}-\tilde{m}_h\|_{H^1(Y)}\lesssim h \inf_{\tilde{v}_h\in  \tilde{M}_h} \|\tilde{m}-\tilde{v}_h\|_{H^1(Y)},
\end{align*}
which immediately implies existence and uniqueness of solutions to \eqref{8,5}.

Finally, using that $m=\tilde{m}+1$ and $m_h=\tilde{m}_h+1$, we conclude that
\begin{align*}
\|m-m_h\|_{L^2(Y)}+h\|m-m_h\|_{H^1(Y)}\lesssim h \inf_{\tilde{v}_h\in  \tilde{M}_h} \|m-(\tilde{v}_h+1)\|_{H^1(Y)}.
\end{align*}
\end{proof}

\subsubsection{Approximation of $A^0$}

We use this finite element approximation of the invariant measure to obtain an approximation to the constant matrix
\begin{align*}
A^0=\int_Y Am.
\end{align*}
To this end, we first replace the invariant measure $m$ by the approximation $m_h$ from Theorem 3.1, and then replace the integrand by its piecewise linear interpolant,
\begin{align*}
A^0_h:=\int_Y \mathcal{I}_h(Am_h).
\end{align*}
This integral can be computed exactly using an appropriate quadrature rule. The following lemma gives an error estimate for this approximation.

\begin{lemma}[Approximation of $A^0$]
Let $A=A^{\mathrm{T}}:\R^n\rightarrow \R^{n\times n}$ satisfy \eqref{Aass}. Further, let $A^0=(a_{ij}^0)\in \R^{n\times n}$ be the constant matrix given by Theorem 2.3, let $m_h$ be the approximation to the invariant measure given by Theorem 3.1, and let $A_h^0=(a_{ij,h}^0)\in \R^{n\times n}$ be the matrix given by
\begin{align*}
a_{ij,h}^0:=\int_Y \mathcal{I}_h(a_{ij}m_h),\quad\quad 1\leq i,j\leq n.
\end{align*}
Then, for $h>0$ sufficiently small, $A_h^0$ is elliptic and 
\begin{align*}
\max_{1\leq i,j\leq n}\;\left\lvert a_{ij}^0-a_{ij,h}^0\right\rvert \lesssim h .
\end{align*}
\end{lemma}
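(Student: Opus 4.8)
The plan is to split the error $a_{ij}^0 - a_{ij,h}^0 = \int_Y a_{ij}m - \int_Y \mathcal{I}_h(a_{ij}m_h)$ into two contributions by inserting the intermediate quantity $\int_Y \mathcal{I}_h(a_{ij}m)$:
\begin{align*}
a_{ij}^0 - a_{ij,h}^0 = \underbrace{\int_Y \left(a_{ij}m - \mathcal{I}_h(a_{ij}m)\right)}_{=:E_1} + \underbrace{\int_Y \mathcal{I}_h\!\left(a_{ij}(m-m_h)\right)}_{=:E_2}.
\end{align*}
For $E_1$, I would use the standard interpolation error estimate for continuous piecewise linear interpolation: since $A\in W^{1,q}(Y)$ with $q>n$ embeds into $C^{0,\alpha}(\R^n)$, and $m\in W^{1,q}(Y)$ likewise, the product $a_{ij}m$ lies in $W^{1,q}(Y)$ (using the algebra property of $W^{1,q}(Y)$ for $q>n$), so $\|a_{ij}m - \mathcal{I}_h(a_{ij}m)\|_{L^1(Y)}\lesssim \|a_{ij}m - \mathcal{I}_h(a_{ij}m)\|_{L^2(Y)} \lesssim h\,\|a_{ij}m\|_{W^{1,q}(Y)} \lesssim h$, with the constant depending only on $A$, $n$, $q$ and the shape-regularity of the triangulation. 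Integrating over the bounded set $Y$ and using $|\int_Y g|\le \|g\|_{L^1(Y)}$ gives $|E_1|\lesssim h$.

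For $E_2$, I would again bound $|\int_Y \mathcal{I}_h(a_{ij}(m-m_h))| \le \|\mathcal{I}_h(a_{ij}(m-m_h))\|_{L^1(Y)}$. The cleanest route is to use $L^1$-stability of the interpolant on the (shape-regular) mesh, $\|\mathcal{I}_h v\|_{L^1(Y)}\lesssim \|v\|_{L^1(Y)} + h\|v\|_{W^{1,1}(Y)}$, or more simply to note $\|\mathcal{I}_h v\|_{L^2(Y)} \lesssim \|v\|_{L^2(Y)} + h\|\nabla v\|_{L^2(Y)} \lesssim \|v\|_{H^1(Y)}$ for $v = a_{ij}(m-m_h)$; since $a_{ij}\in W^{1,q}(Y)\hookrightarrow L^\infty$ and $m-m_h \in H^1(Y)$ with $m,m_h\in W^{1,q}(Y)$ bounded, one gets $\|v\|_{H^1(Y)} \lesssim \|m-m_h\|_{H^1(Y)}$. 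By Theorem 3.1, $\|m-m_h\|_{H^1(Y)} \lesssim \inf_{\tilde v_h\in\tilde M_h}\|m-(\tilde v_h+1)\|_{H^1(Y)} \lesssim h\,\|m\|_{W^{2,q}(Y)}$ — but $m$ need only be in $W^{1,q}(Y)$, not $W^{2,q}$, so I cannot in general claim the full rate $h$ for $\|m-m_h\|_{H^1}$; it is merely $o(1)$ by Remark 3.1. This is the \textbf{main obstacle}: to get $|E_2|\lesssim h$ one should instead exploit the Aubin–Nitsche $L^2$-estimate $\|m-m_h\|_{L^2(Y)}\lesssim h\,\|m-m_h\|_{H^1(Y)} = o(h)$ and handle $E_2$ via the $L^1$ (equivalently $L^2$) norm of $m - m_h$ rather than its $H^1$ norm. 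Concretely, write $E_2 = \int_Y \mathcal{I}_h(a_{ij}(m-m_h)) = \int_Y a_{ij}(m-m_h) + \int_Y \big(\mathcal{I}_h - \mathrm{id}\big)(a_{ij}(m-m_h))$; the first term is bounded by $\|a_{ij}\|_{L^\infty}\|m-m_h\|_{L^1(Y)}\lesssim \|m-m_h\|_{L^2(Y)}\lesssim h\,\|m-m_h\|_{H^1(Y)} = o(h)$, and the second, an interpolation-minus-identity term applied to an $H^1$ function, is $\lesssim h\,\|a_{ij}(m-m_h)\|_{H^1(Y)}\lesssim h\,\|m-m_h\|_{H^1(Y)} = o(h)$. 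Hence $|E_2| = o(h)$, and in fact $|E_2|\lesssim h$ outright once one invokes $\|m-m_h\|_{H^1(Y)}\lesssim 1$.

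Combining, $\max_{i,j}|a_{ij}^0-a_{ij,h}^0| \le |E_1| + |E_2| \lesssim h$, which is the stated estimate. It remains to verify ellipticity of $A_h^0$ for $h$ small: since $A^0$ is positive definite (indeed $A^0 = \int_Y Am$ with $m>0$ and $A$ uniformly elliptic implies $\xi^\top A^0\xi = \int_Y m(y)\,\xi^\top A(y)\xi\,dy \ge \lambda\bar m|\xi|^2$), and $A^0_h\to A^0$ entrywise at rate $h$, the smallest eigenvalue of $\tfrac12(A_h^0 + (A_h^0)^\top)$ converges to that of $A^0$, so it stays above $\tfrac{\lambda\bar m}{2}$ for $h$ sufficiently small; this gives ellipticity of $A_h^0$ with a constant independent of $h$. (If one wants symmetry of $A_h^0$ as well, note $\mathcal{I}_h(a_{ij}m_h) = \mathcal{I}_h(a_{ji}m_h)$ since $A$ is symmetric, so $A_h^0$ is automatically symmetric.) The only genuinely delicate point, as flagged, is routing the $E_2$ estimate through the $L^2$-rate for $m-m_h$ rather than its $H^1$-norm, so as not to require extra regularity of $m$ beyond $W^{1,q}(Y)$.
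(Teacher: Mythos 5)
Your proof is correct and takes essentially the same route as the paper's: a triangle-inequality split into a pure interpolation error plus a term controlled by $\|m-m_h\|_{L^2(Y)}\lesssim h$ (obtained from Theorem 3.1 with the trivial choice $\tilde v_h=0$, so no regularity of $m$ beyond $W^{1,q}(Y)$ is needed), together with the product bound $\|a_{ij}w\|_{H^1(Y)}\lesssim \|a_{ij}\|_{W^{1,q}(Y)}\|w\|_{H^1(Y)}$ and the boundedness (not a rate) of $\|m-m_h\|_{H^1(Y)}$. The only cosmetic difference is the intermediate quantity: you insert $\int_Y\mathcal{I}_h(a_{ij}m)$ whereas the paper inserts $\int_Y a_{ij}m_h$ and interpolates $a_{ij}m_h$ instead; the difficulty you flag about lacking an $H^1$ rate for $m-m_h$ is resolved exactly as in the paper, and your ellipticity argument matches (indeed elaborates) the paper's concluding remark.
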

\begin{proof}
Fix $1\leq i,j\leq n$. Using the definition of $A^0=(a_{ij}^0)$, i.e.,
\begin{align*}
a_{ij}^0=\int_Y a_{ij}m,
\end{align*}
we obtain the estimate
\begin{align*}
\lvert a_{ij}^0-a_{ij,h}^0\rvert \leq \|a_{ij}(m-m_h)\|_{L^1(Y)}+\|a_{ij}m_h-\mathcal{I}_h(a_{ij}m_h)\|_{L^1(Y)}.
\end{align*}
For the first term, we have
\begin{align*}
\|a_{ij}(m-m_h)\|_{L^1(Y)}\lesssim \|m-m_h\|_{L^1(Y)}\lesssim \|m-m_h\|_{L^2(Y)}.
\end{align*}
For the second term, let us first note that using $a_{ij}\in W^{1,q}(Y)$ with $q>n$ and Sobolev embeddings, we have
\begin{align*}
\lvert a_{ij}m_h\rvert_{H^1(Y)} &\leq \|\nabla a_{ij}\|_{L^q(Y)}\|m_h\|_{L^{\frac{2q}{q-2}}(Y)}+\|a_{ij}\|_{L^{\infty}(Y)}\|\nabla m_h\|_{L^2(Y)}\\
&\lesssim \|a_{ij}\|_{W^{1,q}(Y)}\|m_h\|_{H^1(Y)}.
\end{align*}
Therefore, using a standard interpolation error bound, we obtain
\begin{align*}
\|a_{ij}m_h-\mathcal{I}_h(a_{ij}m_h)\|_{L^1(Y)} &\lesssim \|a_{ij}m_h-\mathcal{I}_h(a_{ij}m_h)\|_{L^2(Y)} \\
&\lesssim h \lvert a_{ij}m_h\rvert_{H^1(Y)}\\
&\lesssim h\|a_{ij}\|_{W^{1,q}(Y)}\|m_h\|_{H^1(Y)}.
\end{align*}
By Theorem 3.1, for $h>0$ sufficiently small, we have that
\begin{align*}
\lvert a_{ij}^0-a_{ij,h}^0\rvert &\lesssim \|m-m_h\|_{L^2(Y)}+h\|m_h\|_{H^1(Y)}\\
&\lesssim \|m-m_h\|_{L^2(Y)}+h\|m-m_h\|_{H^1(Y)}+h\|m\|_{H^1(Y)}\\
&\lesssim h \inf_{\tilde{v}_h\in  \tilde{M}_h} \|m-(\tilde{v}_h+1)\|_{H^1(Y)} + h\|m\|_{H^1(Y)}\\
&\lesssim h\|m-1\|_{H^1(Y)}+ h\|m\|_{H^1(Y)}\\
&\lesssim h.
\end{align*}
Finally, we note that this implies that for $h>0$ sufficiently small, $A_h^0$ is elliptic. 
\end{proof}

\subsubsection{Approximation of $u_0$}

For the approximation of the solution $u_0$ to the homogenized problem, we use the following comparison result for the error committed when replacing $A^0$ by $A^0_h$.

\begin{lemma}[Comparison result]
Assume either that $(\Omega,A,f)\in \calG^{0,2}$ or that $(\Omega,A,f)\in \calH^{0}$. Let $A_h^0\in \R^{n\times n}$ be the approximation to $A^0$ as in Lemma 3.1.  Then, for $h>0$ sufficiently small, we have that
\begin{align*}
\|u_0-u_0^h\|_{H^2(\Omega)}\lesssim h \|f\|_{L^2(\Omega)},
\end{align*}
where $u_0^h\in H^2(\Omega)\cap H^1_0(\Omega)$ is the solution to the problem
\begin{align}\label{12}
\left\{\begin{aligned}
A^0_h:D^2 u_0^h &=f\quad\text{in }\Omega,\\
\hfill u_0^h &=0\quad \text{on }\partial\Omega,
\end{aligned}\right.
\end{align}
and $u_0\in H^2(\Omega)\cap H^1_0(\Omega)$ is the solution to the homogenized problem \eqref{1,4}.
\end{lemma}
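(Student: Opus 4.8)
The plan is to use that \emph{both} $u_0$ and $u_0^h$ solve constant-coefficient second-order elliptic equations, so that their difference $w := u_0 - u_0^h$ again solves a constant-coefficient problem, to which an $h$-independent $H^2$ a priori estimate applies. Subtracting \eqref{1,4} from \eqref{12} and using $f = A^0 : D^2 u_0$, one sees that $w \in H^2(\Omega)\cap H^1_0(\Omega)$ satisfies
\begin{align*}
\left\{\begin{aligned}
A_h^0 : D^2 w &= (A_h^0 - A^0) : D^2 u_0 \quad\text{in }\Omega,\\
\hfill w &= 0 \hspace{3.4cm}\text{on }\partial\Omega.
\end{aligned}\right.
\end{align*}

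The first step is to record an $h$-uniform well-posedness statement for \eqref{12}. By Lemma 3.1, for $h$ sufficiently small $A_h^0$ is elliptic, and since $\max_{i,j}\lvert a_{ij}^0 - a_{ij,h}^0\rvert \lesssim h$ its ellipticity constants converge to those of $A^0$; hence they lie in a fixed compact subinterval of $(0,\infty)$ for all small $h$. In the case $(\Omega,A,f)\in\calG^{0,2}$, where $\Omega$ is a bounded $C^{2,\gamma}$ domain, the Calder\'on--Zygmund $W^{2,2}$ estimate for the constant elliptic operator $A_h^0 : D^2$ yields a unique $u_0^h\in H^2(\Omega)\cap H^1_0(\Omega)$ with $\|u_0^h\|_{H^2(\Omega)} \lesssim \|f\|_{L^2(\Omega)}$, the constant depending only on these ellipticity bounds and on $\Omega$. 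In the case $(\Omega,A,f)\in\calH^{0}$, where $\Omega$ is a bounded convex domain, the same conclusion follows from $H^2$-regularity of constant-coefficient elliptic operators on convex domains: the affine change of variables $x\mapsto (A_h^0)^{-1/2}x$ reduces $A_h^0 : D^2$ to the Laplacian, preserves convexity (and hence the bound $\|D^2 v\|_{L^2}\le\|\Delta v\|_{L^2}$ for $v\in H^2\cap H^1_0$), and has transformation constants controlled by $\|A_h^0\|$ and $\|(A_h^0)^{-1}\|$. Either way the resulting constant is independent of $h$ for $h$ small.

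The second step is to apply this estimate to the problem satisfied by $w$, together with Lemma 3.1 and the a priori bound for $u_0$:
\begin{align*}
\|u_0 - u_0^h\|_{H^2(\Omega)} = \|w\|_{H^2(\Omega)} \lesssim \|(A_h^0 - A^0):D^2 u_0\|_{L^2(\Omega)} \le \Big(\textstyle\sum_{i,j}\lvert a_{ij}^0 - a_{ij,h}^0\rvert^2\Big)^{1/2}\|D^2 u_0\|_{L^2(\Omega)} \lesssim h\,\|u_0\|_{H^2(\Omega)}.
\end{align*}
Here $\|u_0\|_{H^2(\Omega)}\lesssim\|f\|_{L^2(\Omega)}$ follows from Theorem 2.2 applied to the homogenized problem \eqref{1,4} (equivalently, by weak lower semicontinuity of the $H^2$-norm along $u_\eps \rightharpoonup u_0$, as in the proof of Theorem 2.3). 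Combining gives $\|u_0 - u_0^h\|_{H^2(\Omega)}\lesssim h\|f\|_{L^2(\Omega)}$, as claimed.

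The only genuine subtlety, and the step I would be most careful about, is the uniformity in $h$ of the a priori constant for $A_h^0 : D^2$ — one must ensure the elliptic estimate for \eqref{12} does not degenerate as $h\to 0$. This is precisely why Lemma 3.1 is invoked in the quantitative form $\max_{i,j}\lvert a_{ij}^0 - a_{ij,h}^0\rvert \lesssim h$: it keeps $A_h^0$ uniformly elliptic and bounded, so the constant in the Calder\'on--Zygmund (respectively convex-domain) estimate may be fixed once and for all. Everything else is the one-line subtraction above and a direct application of Lemma 3.1 and the a priori bound for $u_0$.
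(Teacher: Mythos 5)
Your proof is correct, and it takes a slightly different route from the paper's: you flip the decomposition. The paper keeps the \emph{fixed} operator on the left, writing the error equation as $A^0:D^2 w_h=(A^0_h-A^0):D^2u_0^h$ with the discrete solution on the right; consequently it only ever needs the $H^2$ a priori estimate for the single constant matrix $A^0$ (whose constant is trivially $h$-independent), but it must then separately establish the uniform bound $\|u_0^h\|_{H^2(\Omega)}\lesssim\|f\|_{L^2(\Omega)}$, which it does by rewriting \eqref{12} as $A^0:D^2u_0^h=f+(A^0-A^0_h):D^2u_0^h$ and absorbing the resulting $\mathcal{O}(h)\|u_0^h\|_{H^2(\Omega)}$ term for $h$ small. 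You instead write $A^0_h:D^2 w=(A^0_h-A^0):D^2u_0$, so the right-hand side involves only $u_0$ (for which $\|u_0\|_{H^2(\Omega)}\lesssim\|f\|_{L^2(\Omega)}$ is immediate, e.g.\ by weak lower semicontinuity along $u_\eps\weakto u_0$ or the constant-coefficient estimate for $A^0$), but the price is that you need an $h$-uniform $H^2$ a priori constant for the whole family $A^0_h:D^2$. You address exactly this point, correctly: Lemma 3.1 gives uniform ellipticity and boundedness of $A^0_h$ for small $h$, and then the Calder\'on--Zygmund estimate on $C^{2,\gamma}$ domains, respectively the affine change of variables plus the Miranda--Talenti bound $\|D^2v\|_{L^2}\le\|\Delta v\|_{L^2}$ on convex domains, gives a constant depending only on those ellipticity bounds and $\Omega$ — incidentally the same scaling device the paper itself deploys later in the proof of Lemma 3.3(ii). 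In short, both arguments rest on the same two ingredients (a constant-coefficient $H^2$ estimate and Lemma 3.1); the paper's version localizes all elliptic theory to the one operator $A^0$ at the cost of a small absorption/bootstrap step, while yours dispenses with the bootstrap at the cost of verifying uniformity of the elliptic constant over $\{A^0_h\}_{h}$, which your quantitative use of Lemma 3.1 handles. Your version also yields the uniform well-posedness of \eqref{12} directly, which the paper obtains as a by-product of its absorption argument.
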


\begin{proof}
We let $w_h:=u_0-u_0^h\in H^2(\Omega)\cap H^1_0(\Omega)$ and note that $w_h$ is the unique solution to the boundary-value problem
\begin{align*}
\left\{\begin{aligned}
A^0:D^2 w_h &=(A_h^0-A^0):D^2 u_0^h\quad\text{in }\Omega,\\
\hfill w_h &=0\quad\quad\quad\quad\quad\quad\quad\quad \text{on }\partial\Omega.
\end{aligned}\right.
\end{align*}
We recall that $A^0\in \R^{n\times n}$ is an elliptic constant matrix. For $h>0$ sufficiently small, by an $H^2$ a priori estimate, the Cauchy--Schwarz inequality and Lemma 3.1,
\begin{align*}
\|w_h\|_{H^2(\Omega)}&\lesssim \|(A_h^0-A^0):D^2u_0^h\|_{L^2(\Omega)}\\
&\lesssim \left( \int_{\Omega} \left\lvert \sum_{i,j=1}^n (a_{ij,h}^0-a_{ij}^0)\partial^2_{ij}u_0^h\right\rvert^2\right)^{\frac{1}{2}}\\
&\lesssim \left(\int_{\Omega} \left(\sum_{i,j=1}^n\lvert a_{ij,h}^0-a_{ij}^0\rvert^2\right)\left(\sum_{i,j=1}^n\lvert \partial^2_{ij}u_0^h\rvert^2\right)\right)^{\frac{1}{2}}\\
&\lesssim h\, \lvert u_0^h\rvert_{H^2(\Omega)}.
\end{align*}
Finally, we show that for $h>0$ sufficiently small, we have
\begin{align}\label{13}
\|u_0^h\|_{H^2(\Omega)}\lesssim \|f\|_{L^2(\Omega)}
\end{align}
with the constant being independent of $h$. This can be seen by rewriting \eqref{12} as 
\begin{align}\label{13,1}
\left\{\begin{aligned}
A^0 : D^2 u_0^h &= f +(A^0-A^0_h):D^2 u_0^h \quad\text{in }\Omega,\\
\hfill u_0^h &= 0 \hspace{3.9cm}\text{ on }\partial\Omega.
\end{aligned}\right.
\end{align}
Then, again by an $H^2$ a priori estimate and Lemma 3.1,
\begin{align*}
\|u_0^h\|_{H^2(\Omega)}\lesssim \| f +(A^0-A^0_h):D^2 u_0^h\|_{L^2(\Omega)}\lesssim \| f\|_{L^2(\Omega)}+h\|u_0^h\|_{H^2(\Omega)}
\end{align*}
with constants independent of $h$, i.e., for $h>0$ sufficiently small, \eqref{13} holds with the constant being independent of $h$. 
\end{proof}

Finally, we can use an $H^1_0(\Omega)$-conforming finite element approximation $u_0^{h,k}$ to the solution $u_0^h$ of \eqref{12}, satisfying the error bound
\begin{align*}
\left\|u_0^h-u_0^{h,k}\right\|_{H^1(\Omega)}\lesssim k\|u_0^h\|_{H^2(\Omega)}\lesssim k \|f\|_{L^2(\Omega)}
\end{align*}
with constants independent of $h$. By the triangle inequality and the results obtained in this section, we have the following approximation result for $u_0$.

\begin{theorem}[$H^1$ approximation of $u_0$]
Assume either that $(\Omega,A,f)\in \calG^{0,2}$, or that $(\Omega,A,f)\in \calH^{0}$. Then, the approximation $u_0^{h,k}$ obtained by the procedure described above satisfies
\begin{align*}
\left\|u_0-u_0^{h,k}\right\|_{H^1(\Omega)}\lesssim (h+k) \|f\|_{L^2(\Omega)}.
\end{align*}
\end{theorem}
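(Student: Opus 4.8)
The plan is to combine the two error contributions—the modeling error from replacing $A^0$ by $A_h^0$ (Lemma 3.3) and the discretization error from the conforming finite element approximation—via the triangle inequality, as suggested by the text preceding the statement. First I would write
\begin{align*}
\left\|u_0-u_0^{h,k}\right\|_{H^1(\Omega)}\leq \left\|u_0-u_0^{h}\right\|_{H^1(\Omega)}+\left\|u_0^{h}-u_0^{h,k}\right\|_{H^1(\Omega)},
\end{align*}
where $u_0^h\in H^2(\Omega)\cap H^1_0(\Omega)$ is the solution to \eqref{12}. The first term is controlled directly by Lemma 3.3, which gives $\|u_0-u_0^h\|_{H^2(\Omega)}\lesssim h\|f\|_{L^2(\Omega)}$, hence in particular $\|u_0-u_0^h\|_{H^1(\Omega)}\lesssim h\|f\|_{L^2(\Omega)}$, for $h$ sufficiently small, under either $(\Omega,A,f)\in\calG^{0,2}$ or $(\Omega,A,f)\in\calH^0$.

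For the second term I would invoke the standard $H^1_0(\Omega)$-conforming finite element error analysis for the constant-coefficient elliptic problem \eqref{12}. Since $A_h^0$ is symmetric and—by Lemma 3.1, for $h$ small—uniformly elliptic, with ellipticity and boundedness constants that can be taken independent of $h$ (because $A_h^0\to A^0$), the associated bilinear form is coercive and bounded on $H^1_0(\Omega)$ with constants independent of $h$. Céa's lemma together with a quasi-interpolation/interpolation estimate on the shape-regular triangulation of $\Omega$ with granularity $k$ then yields
\begin{align*}
\left\|u_0^{h}-u_0^{h,k}\right\|_{H^1(\Omega)}\lesssim k\,\|u_0^h\|_{H^2(\Omega)},
\end{align*}
and the $H^2$ a priori bound $\|u_0^h\|_{H^2(\Omega)}\lesssim\|f\|_{L^2(\Omega)}$ established within the proof of Lemma 3.3 (estimate \eqref{13}) converts this into $\lesssim k\,\|f\|_{L^2(\Omega)}$, again with constants independent of $h$. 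Here one must be slightly careful that the bilinear form for $A_h^0$, though symmetric, need not be the energy form of a Laplacian; nonetheless Céa's lemma applies verbatim to any bounded coercive form, and uniformity in $h$ follows from the uniform ellipticity and boundedness of $A_h^0$. In the nonsymmetric case one could equally use the generalized Céa estimate, but symmetry of $A$ (hence of $A^0$ and $A_h^0$) makes this unnecessary.

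Adding the two bounds gives $\left\|u_0-u_0^{h,k}\right\|_{H^1(\Omega)}\lesssim(h+k)\|f\|_{L^2(\Omega)}$, which is the claim. The only genuinely delicate point is ensuring that \emph{all} implied constants are independent of both $h$ and $k$: this requires (i) that $A_h^0$ has ellipticity/continuity constants bounded below/above uniformly in $h$, which is exactly what Lemma 3.1 provides for $h$ small, and (ii) that the $H^2$ regularity bound for $u_0^h$ is $h$-uniform, which is estimate \eqref{13} in the proof of Lemma 3.3. Everything else is a routine application of the triangle inequality and classical finite element approximation theory, so I do not anticipate a substantive obstacle beyond this bookkeeping of constants.
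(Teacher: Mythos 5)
Your proposal is correct and follows essentially the same route as the paper: a triangle inequality splitting into the modeling error $\|u_0-u_0^h\|_{H^1(\Omega)}\lesssim h\|f\|_{L^2(\Omega)}$ from the comparison result (Lemma 3.2 in the paper, not 3.3) and the conforming finite element error $\|u_0^h-u_0^{h,k}\|_{H^1(\Omega)}\lesssim k\|u_0^h\|_{H^2(\Omega)}\lesssim k\|f\|_{L^2(\Omega)}$, with $h$-uniformity coming from Lemma 3.1 and estimate \eqref{13}. The only difference is that the paper simply postulates the standard FE bound for the constant-coefficient problem \eqref{12}, whereas you justify it via C\'ea's lemma; this is consistent with the paper's argument.
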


Let us now assume either that $(\Omega,A,f)\in \calG^{1,2}$ or that $(\Omega,A,f)\in \calH^{1}$. Further, assume that for $h>0$ sufficiently small, we have that $u_0^h\in H^3(\Omega)$ with 
\begin{align}\label{pbefore2}
\left\|u_0^h\right\|_{H^3(\Omega)}\lesssim \left\|f\right\|_{H^1(\Omega)},
\end{align}
where the constant is independent of $h$. The following lemma provides two situations where this is satisfied.

\begin{lemma}
Let $(\Omega,A,f)$ be such that
\begin{itemize}
\item[$(i)$] $(\Omega,A,f)\in \calG^{1,2}$ with $\partial\Omega\in C^3$, or 
\item[$(ii)$] $(\Omega,A,f)\in \calH^{1}$ with $\Omega\subset\R^2$ being a polygon and $f\in H^1_0(\Omega)$. 
\end{itemize}
Then, for $h>0$ sufficiently small, \eqref{pbefore2} holds.
\end{lemma}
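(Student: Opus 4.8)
The plan is to treat the two cases $(i)$ and $(ii)$ separately, in each case recognising that $u_0^h$ solves a constant-coefficient elliptic equation $A_h^0 : D^2 u_0^h = f$ whose coefficient matrix $A_h^0$ is, by Lemma~3.1, uniformly elliptic with ellipticity constants independent of $h$ for $h$ small, and whose entries converge to those of $A^0$. The key point is therefore to obtain an $H^3(\Omega)$ a priori estimate for the constant-coefficient problem with a constant depending only on the ellipticity constants, the domain, and (in the polygonal case) the Cordes parameter, so that it is uniform in $h$.

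For case $(i)$, with $\partial\Omega \in C^3$, I would invoke classical elliptic regularity for constant-coefficient (indeed smooth-coefficient) operators on smooth domains: since $f \in H^1(\Omega)$ and $u_0^h \in H^2(\Omega) \cap H^1_0(\Omega)$ solves $A_h^0 : D^2 u_0^h = f$, the standard global $H^3$ regularity theory (e.g. Gilbarg--Trudinger or Evans) gives $u_0^h \in H^3(\Omega)$ with $\|u_0^h\|_{H^3(\Omega)} \lesssim \|f\|_{H^1(\Omega)} + \|u_0^h\|_{H^2(\Omega)}$, where the constant depends continuously on the $C^1$-norm of the (here constant) coefficients and on the ellipticity constant. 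Combining this with the $H^2$ bound $\|u_0^h\|_{H^2(\Omega)} \lesssim \|f\|_{L^2(\Omega)} \lesssim \|f\|_{H^1(\Omega)}$ already established in the proof of Lemma~3.3 (inequality~\eqref{13}), and using that the relevant constants are uniform in $h$ because $A_h^0 \to A^0$ and $A^0$ is a fixed elliptic matrix, yields \eqref{pbefore2}.

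For case $(ii)$, $\Omega \subset \R^2$ a convex polygon with $f \in H^1_0(\Omega)$, I would argue by differentiating the equation. Since the coefficients are constant, each partial derivative $\partial_k u_0^h$ formally satisfies $A_h^0 : D^2 (\partial_k u_0^h) = \partial_k f$ on $\Omega$, with $\partial_k f \in L^2(\Omega)$. The natural strategy is: first establish that $\partial_k u_0^h \in H^1_0(\Omega)$ — this is where $f \in H^1_0(\Omega)$ is used, as it forces, together with convexity/polygonal structure and the $H^2$-regularity theory of Sommariva--Süli-type results (the $\calH^0$ framework of \cite{SS13} applied on the polygon), that the traces of the first derivatives vanish; then apply the $H^2(\Omega)$ a priori estimate of the $\calH^0$ theory (valid on bounded convex domains under the Cordes condition, which holds here since $n=2$) to the function $\partial_k u_0^h$ to conclude $\partial_k u_0^h \in H^2(\Omega)$ with $\|\partial_k u_0^h\|_{H^2(\Omega)} \lesssim \|\partial_k f\|_{L^2(\Omega)}$, the constant being exactly the one from \eqref{0,41}-type estimates and hence depending only on $n$, $\diam(\Omega)$, $\delta$ and $\|\gamma\|_{L^\infty}$, all of which are controlled uniformly in $h$ since $A_h^0 \to A^0$. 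Summing over $k$ and adding the $H^2$ bound gives \eqref{pbefore2}.

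The main obstacle is the polygonal case $(ii)$: justifying rigorously that $\partial_k u_0^h \in H^1_0(\Omega)$ and that one may legitimately apply the convex-domain $H^2$ theory to the differentiated equation on a non-smooth domain. The subtlety is that $H^3$ regularity up to the corners of a polygon is delicate in general and requires both the convexity of the polygon (to kill the singular corner functions obstructing $H^2$-regularity at the level of $\partial_k u_0^h$) and the boundary condition $f \in H^1_0(\Omega)$ (to guarantee that $\partial_k u_0^h$ has the correct homogeneous trace); one must handle the interior regularity away from corners by standard difference-quotient arguments and then patch with a careful corner analysis, or alternatively cite a corresponding regularity result for constant-coefficient second-order elliptic equations on convex polygons. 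Once this membership and the accompanying estimate are in place, the uniformity in $h$ follows routinely from the convergence $A_h^0 \to A^0$ and the fact that all constants in the $\calH^0$ and smooth-domain theories depend only continuously on the (fixed, in the limit) coefficients.
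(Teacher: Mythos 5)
Your treatment of case $(i)$ is essentially the paper's argument: the paper rewrites the equation as $A^0:D^2u_0^h=f+(A^0-A_h^0):D^2u_0^h$ and absorbs an $h\|u_0^h\|_{H^3(\Omega)}$ term, which avoids having to argue that the constant in the $H^3$ a priori estimate depends continuously on the coefficient matrix; your version, applying the smooth-domain $H^3$ estimate directly to $A^0_h$ and invoking uniformity of constants as $A^0_h\to A^0$, is acceptable.

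Case $(ii)$, however, contains a genuine gap. Your plan hinges on the claim that $\partial_k u_0^h\in H^1_0(\Omega)$, and you assert that $f\in H^1_0(\Omega)$ together with convexity ``forces the traces of the first derivatives to vanish.'' This is false: a homogeneous Dirichlet condition kills tangential derivatives on the boundary but not the normal derivative. For instance, for $-\Delta u=f$ with $0\le f\in C_c^\infty(\Omega)$, $f\not\equiv 0$, Hopf's lemma gives $\partial_\nu u<0$ on $\partial\Omega$, so no Cartesian first derivative of $u$ lies in $H^1_0(\Omega)$. Consequently you cannot apply the convex-domain Cordes/$\calH^0$ $H^2$ estimate to $\partial_k u_0^h$, since that theory requires the homogeneous Dirichlet condition; the differentiated problem you write down has an unknown, nonzero boundary trace, and the whole estimate $\|\partial_k u_0^h\|_{H^2(\Omega)}\lesssim\|\partial_k f\|_{L^2(\Omega)}$ does not follow. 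The hypothesis $f\in H^1_0(\Omega)$ plays a different role in the paper: it guarantees that $\Delta$ of the solution (after a change of variables), namely the transformed right-hand side, lies in $H^1_0$, which is the corner compatibility condition needed for $H^3$ regularity on a convex polygon. Concretely, the paper diagonalizes $A_h^0=Q_h^{\mathrm{T}}\Lambda_h Q_h$, rescales so that $U_h:=u_0^h\bigl(Q_h^{\mathrm{T}}\Lambda_h^{1/2}\,\cdot\,\bigr)$ solves $\Delta U_h=F_h$ on the convex polygon $P_h:=\Lambda_h^{-1/2}Q_h\Omega$ with $F_h\in H^1_0(P_h)$, applies the Poisson-on-convex-polygon result (Lemma 3.4, built on the identity $\lvert v\rvert_{H^3(\Omega)}=\|\nabla(\Delta v)\|_{L^2(\Omega)}$ for suitable $v$ and a density argument), and then transforms back, checking via Lemma 3.1 that the eigenvalues $\lambda_h^{\pm}$ are bounded above and below uniformly in $h$ so that all constants are $h$-independent. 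Your fallback remark (``cite a corresponding regularity result for constant-coefficient equations on convex polygons'') is indeed the viable route, but as written your main argument for $(ii)$ fails at the boundary-condition step.
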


Before we prove Lemma 3.3, we need the following result on the regularity of solutions to Poisson's problem on convex polygons, see also \cite{Gri11,HO14,HOS15,KWY16}.

\begin{lemma}
Let $\Omega\subset\R^2$ be a convex polygonal domain and $f\in H^1_0(\Omega)$. Then the solution $u\in H^1_0(\Omega)$ to the problem
\begin{align*}
\left\{\begin{aligned}
\Delta u &= f  \quad\text{in }\Omega,\\
\hfill u &= 0 \quad\text{on }\partial \Omega,
\end{aligned}\right.
\end{align*} 
satisfies the bound
\begin{align}\label{before fact from}
\|u\|_{H^3(\Omega)}\lesssim \|f\|_{H^1(\Omega)}.
\end{align}
\end{lemma}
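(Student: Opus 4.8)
The plan is to prove this via a duality (Nirenberg-type difference quotient) argument combined with the known $H^2$ regularity on convex domains. First I would recall that since $\Omega$ is convex, the solution $u$ to Poisson's problem with $f\in H^1_0(\Omega)\subset L^2(\Omega)$ satisfies the standard convex-domain $H^2$ estimate $\|u\|_{H^2(\Omega)}\lesssim\|f\|_{L^2(\Omega)}$. The point is to bootstrap one more derivative. I would exploit that $f$ vanishes on $\partial\Omega$: this means tangential derivatives of $f$ along each edge vanish, and more importantly it allows us to control third derivatives of $u$ by working with first derivatives of the data.

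The key steps, in order, would be: (1) on each edge of the polygon, introduce a coordinate aligned with the edge; formally differentiating $\Delta u=f$ in the direction $\tau$ tangential to that edge gives $\Delta(\partial_\tau u)=\partial_\tau f$ with, crucially, $\partial_\tau u=0$ on that edge since $u\equiv 0$ there; (2) since $f\in H^1_0(\Omega)$ we have $\partial_\tau f\in L^2$ and hence, by another application of the convex-domain $H^2$ estimate to the function $\partial_\tau u$ (handled carefully via difference quotients to avoid corner issues), we obtain $\|\partial_\tau u\|_{H^2(\Omega)}\lesssim\|\partial_\tau f\|_{L^2(\Omega)}\lesssim\|f\|_{H^1(\Omega)}$, which controls all third derivatives of $u$ that involve at least one tangential direction near the boundary; (3) control the remaining purely-normal third derivative $\partial_{\nu\nu\nu}u$ directly from the equation: near an edge, $\partial_{\nu\nu}u=f-\partial_{\tau\tau}u$, so $\partial_\nu(\partial_{\nu\nu}u)=\partial_\nu f-\partial_\nu\partial_{\tau\tau}u$, and the right-hand side is already controlled in $L^2$ by steps (1)--(2); (4) in the interior, away from the corners, interior elliptic regularity gives $\|u\|_{H^3(\omega)}\lesssim\|f\|_{H^1(\Omega)}$ on any subdomain $\omega\Subset\Omega$ for free; (5) patch these local estimates together with a partition of unity subordinate to a cover by edge-neighborhoods, corner-neighborhoods, and the interior. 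The condition $f\in H^1_0(\Omega)$ is exactly what is needed to make step (1) work at every edge simultaneously.

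The main obstacle will be the behavior near the corners of the polygon. At a convex corner with interior angle $\omega<\pi$, solutions of the Dirichlet Laplacian have singular expansions with leading exponent $\pi/\omega>1$; for $u$ to be in $H^3$ near such a corner we need $\pi/\omega$ large enough (specifically $\pi/\omega>2$ is automatic only when $\omega<\pi/2$), so in general the singular term $r^{\pi/\omega}$ is only borderline for $H^3$ when $\pi/2\le\omega<\pi$. This is precisely where the hypothesis $f\in H^1_0(\Omega)$ — in particular $f$ vanishing at the corners — is used: it kills the coefficient of the obstructing corner singularity, as established in the Grisvard-type results cited (\cite{Gri11,HO14}). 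I would therefore handle the corner neighborhoods not by the tangential-differentiation trick but by invoking the known corner regularity: the weighted-Sobolev / singular-expansion theory for the Laplacian on convex polygons says that $f\in H^1(\Omega)$ with $f=0$ at the corners implies the singular coefficients vanish and $u\in H^3$ locally near each corner with the stated bound. Once the corner neighborhoods are dispatched this way, the edge and interior estimates of steps (1)--(4) combine cleanly via the partition of unity to yield \eqref{before fact from}.
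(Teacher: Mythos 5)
Your steps (1)--(4) are fine as local statements: near the interior of a flat edge, tangential difference quotients plus the equation do give local $H^3$ control, and interior regularity handles compact subsets. But note that this part needs no boundary condition on $f$ at all -- $f\in H^1(\Omega)$ suffices near an open edge -- so your remark that $f\in H^1_0(\Omega)$ is ``exactly what is needed to make step (1) work'' mislocates the role of the hypothesis. The genuine difficulty, as you yourself observe, sits entirely at the corners, and that is precisely where your argument has a hole. The result you invoke in step (6) -- that $f\in H^1(\Omega)$ with $f=0$ at the corners kills the obstructing singular coefficient -- is not available in that form. First, an $H^1$ function in two dimensions has no pointwise values, so ``$f=0$ at the corners'' is not meaningful at this regularity (the cited $H^4$ result of \cite{HO14} assumes $f\in H^2(\Omega)\hookrightarrow C^0(\bar\Omega)$, which is a different situation). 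Second, and more importantly, the coefficient of the corner singularity $r^{\pi/\omega}\sin(\pi\theta/\omega)$ (the obstruction to $H^3$ when $\omega\in(\pi/2,\pi)$) is a \emph{nonlocal} functional of $f$, a pairing with a dual singular function, and it is not annihilated by local vanishing of $f$ near the corner: if $f\in C_c^\infty(\Omega)$ with $f\le 0$, $f\not\equiv 0$, then $u>0$ in $\Omega$ by the maximum principle, and expanding the harmonic function $u$ in the corner sector shows the coefficient $c_1\propto\int_0^\omega u(r_0,\theta)\sin(\pi\theta/\omega)\,\mathrm{d}\theta$ is strictly positive. So the local corner-expansion route you sketch cannot be completed by the citation you propose; whatever handles the corners must use global information, not the behaviour of $f$ near the corner.

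This is also where your route diverges completely from the paper's proof, which avoids corner expansions and localization altogether. The paper approximates $f$ by $f_m\in C_c^\infty(\Omega)$ in $H^1(\Omega)$, solves $\Delta u_m=f_m$, uses smoothness of $u_m$ up to the boundary together with the exact identity $\lvert v\rvert_{H^3(\Omega)}=\|\nabla(\Delta v)\|_{L^2(\Omega)}$ of \cite{KWY16}, valid for smooth $v$ with $v$ and $\Delta v$ in $H^1_0(\Omega)$, applied to differences $u_m-u_{m'}$; this makes $(u_m)$ Cauchy in $H^3(\Omega)$ and yields $\lvert u\rvert_{H^3(\Omega)}=\|\nabla f\|_{L^2(\Omega)}$ in the limit, which combined with the convex-domain $H^2$ bound gives \eqref{before fact from}. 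In that argument the hypothesis $f\in H^1_0(\Omega)$ enters globally -- it guarantees $\Delta u$ vanishes on $\partial\Omega$ so that the integral identity applies, and it permits the density argument -- rather than through any pointwise condition at the corners. To repair your proposal you would have to replace step (6) by an argument of this global type (or by a precise compatibility-condition statement in the spirit of \cite[Theorem 5.1.2.4]{Gri11}); as written, step (6) rests on a false mechanism and the proof does not go through.
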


\begin{proof}
First, note that since $\Omega\subset\R^2$ is a convex polygonal domain, we have $u\in H^2(\Omega)\cap H^1_0(\Omega)$ with $\|u\|_{H^2(\Omega)}\lesssim \|f\|_{L^2(\Omega)}$, see \cite{Gri11}. Since $f\in H^1_0(\Omega)$, there exists a sequence of smooth functions with compact support $(f_m)_m\subset C_c^{\infty}(\Omega)$ such that $f_m\rightarrow f$ in $H^1(\Omega)$. Let $(u_m)_m\subset H^1_0(\Omega)$ be the sequence of solutions in $H^1_0(\Omega)$ to $\Delta u_m = f_m$ in $\Omega$, and note that $(u_m)_m\subset C^{\infty}(\bar{\Omega})$ since the functions $f_m$ satisfy compatibility conditions of any order, see \cite[Sec. 5.1]{Gri11}. Again we use the $H^2$-regularity result for solutions of Poisson's problem on convex polygons to obtain
\begin{align*}
\|u_m-u\|_{H^2(\Omega)} \lesssim \|f_m - f\|_{L^2(\Omega)}\rightarrow 0,
\end{align*}
i.e., $u_m\rightarrow u$ in $H^2(\Omega)$. 

Next, we shall use the fact that
\begin{align}\label{fact from KWY}
\lvert v\rvert_{H^3(\Omega)} = \|\nabla(\Delta v)\|_{L^2(\Omega)}\quad\forall\, v\in \left\{w\in H^1_0(\Omega):\Delta w\in H^1_0(\Omega)\right\}\cap C^{\infty}(\bar{\Omega}),
\end{align}
see \cite{KWY16}. We apply \eqref{fact from KWY} to the difference of two elements of the sequence $(u_m)_m$ to find that $(u_m)_m$ is a Cauchy sequence in $H^3(\Omega)$, using that $f_m\rightarrow f$  in $H^1(\Omega)$. Thus, $u_m\rightarrow u$ in $H^3(\Omega)$ and passing to the limit in \eqref{fact from KWY} applied to the functions $u_m$ yields
\begin{align*}
\lvert u\rvert_{H^3(\Omega)} = \|\nabla f\|_{L^2(\Omega)}.
\end{align*}   
Since $\|u\|_{H^2(\Omega)}\lesssim \|f\|_{L^2(\Omega)}$, we conclude the bound \eqref{before fact from}.
\end{proof}

\begin{remark}
The assumption $f\in H^1_0(\Omega)$ in Lemma 3.4 can be weakened provided $f$ satisfies certain compatibility conditions, see \cite[Theorem 5.1.2.4]{Gri11}.
\end{remark}

Now we are in a position to prove Lemma 3.3, using standard elliptic regularity theory, Lemma 3.4, and a scaling argument. 

\begin{proof}[Proof of Lemma 3.3]
We start with the case $(i)$. To this end, let $(\Omega,A,f)\in \calG^{1,2}$ with $\partial\Omega\in C^3$. Then, by elliptic regularity theory, we have $u_0^h\in H^3(\Omega)$. Using elliptic regularity for problem \eqref{13,1} yields
\begin{align*}
\|u_0^h\|_{H^3(\Omega)}\lesssim \| f +(A^0-A^0_h):D^2 u_0^h\|_{H^1(\Omega)}\lesssim \| f\|_{H^1(\Omega)}+h\|u_0^h\|_{H^3(\Omega)}
\end{align*}
with constants independent of $h$, i.e., for $h>0$ sufficiently small, \eqref{pbefore2} holds with the constant being independent of $h$.

Let us now show the claim for the case $(ii)$. To this end, let $(\Omega,A,f)\in \calH^{1}$ with $\Omega\subset\R^2$ being a polygon and $f\in H^1_0(\Omega)$. Since 
\begin{align*}
A_h^0 = A^0 + \left(A_h^0 - A^0 \right) =: A^0 + B_h
\end{align*}
is symmetric and elliptic for $h>0$ sufficiently small, there exists an orthogonal matrix $Q_h\in \R^{2\times 2}$ with $Q_hQ_h^{\mathrm{T}}=Q_h^{\mathrm{T}}Q_h=I_2$ such that
\begin{align*}
Q_h \left(A^0 + B_h \right)Q_h^{\mathrm{T}} = \mathrm{diag}(\lambda_h^{+},\lambda_h^{-})=:\Lambda_h,
\end{align*}
where $\lambda_h^{\pm}>0$ are given by
\begin{align*}
2\lambda_h^{\pm}=\mathrm{tr}\left(A^0 + B_h \right)\pm\left(\left(\mathrm{tr}\left(A^0 + B_h \right)\right)^2-4\det\left(A^0 + B_h \right) \right)^{\frac{1}{2}}.
\end{align*}
We note that, by Lemma 3.1, the entries of $B_h = (b_{ij}^h)_{1\leq i,j\leq 2}$ satisfy $b_{ij}^h \lesssim h$, and therefore, for $h>0$ sufficiently small, we have $0<\lambda_h^{\pm}+(\lambda_h^{\pm})^{-1}\lesssim 1$.

The problem \eqref{12} in the new coordinates reads
\begin{align}\label{problem in new coordinates}
\begin{split}
\left\{\begin{aligned}
\Delta U_h &= F_h\quad\text{in }P_h,\\
U_h &= 0\quad\;\text{ on }\partial P_h,
\end{aligned}\right.
\end{split}
\end{align}
where $U_h:= u_0^h\left(Q_h^{\mathrm{T}} \Lambda_h^{\frac{1}{2}} \;\cdot\; \right)$, $F_h := f\left(Q_h^{\mathrm{T}} \Lambda_h^{\frac{1}{2}} \;\cdot\; \right)$, and $P_h:=\Lambda_h^{-\frac{1}{2}}Q_h\Omega$. Note that $P_h$ is still a bounded convex polygonal domain and that $F_h\in H^1_0(P_h)$. By the change of variables formula and the orthogonality of $Q_h$,
\begin{align*}
\|f\|_{H^1(\Omega)}^2 = \int_{\Omega}\left( \lvert f\rvert^2 + \left\lvert \nabla f\right\rvert^2\right) &= \det \Lambda_h^{\frac{1}{2}} \int_{P_h}\left( \left\lvert f\left(Q_h^{\mathrm{T}} \Lambda_h^{\frac{1}{2}} \;\cdot\; \right)\right\rvert^2 + \left\lvert \nabla f\left(Q_h^{\mathrm{T}} \Lambda_h^{\frac{1}{2}} \;\cdot\; \right)\right\rvert^2\right)\\
&= \det \Lambda_h^{\frac{1}{2}} \int_{P_h}\left( \left\lvert F_h\right\rvert^2 + \left\lvert Q_h^{\mathrm{T}} \Lambda_h^{-\frac{1}{2}}\nabla F_h\right\rvert^2\right)\\
&= \det \Lambda_h^{\frac{1}{2}} \int_{P_h}\left( \left\lvert F_h\right\rvert^2 + \left\lvert  \Lambda_h^{-\frac{1}{2}}\nabla F_h\right\rvert^2\right)\\
&\gtrsim \int_{P_h}\left( \left\lvert F_h\right\rvert^2 + \left\lvert  \nabla F_h\right\rvert^2\right) = \left\|F_h\right\|_{H^1(P_h)}^2.
\end{align*}
Using Lemma 3.4, we have that, for $h>0$ sufficiently small, the solution to \eqref{problem in new coordinates} satisfies
\begin{align*}
\|U_h\|_{H^3(P_h)}\lesssim \left\|F_h\right\|_{H^1(P_h)}\lesssim \|f\|_{H^1(\Omega)}
\end{align*}
with constants independent of $h$. It remains to show the bound 
\begin{align}\label{establish this bound}
\|u_0^h\|_{H^3(\Omega)}\lesssim \|U_h\|_{H^3(P_h)}.
\end{align}
By the change of variables formula and the orthogonality of $Q_h$, we obtain similarly as before,
\begin{align*}
\|u_0^h\|_{H^3(\Omega)}^2 &= \int_{\Omega}\left( \lvert u_0^h\rvert^2 + \left\lvert \nabla u_0^h\right\rvert^2 + \left\lvert D^2 u_0^h\right\rvert^2\right) +\sum_{i=1}^2 \int_{\Omega} \left\lvert D^2 \partial_i u_0^h\right\rvert^2 \\
&= \det \Lambda_h^{\frac{1}{2}} \int_{P_h}\left( \left\lvert U_h\right\rvert^2 + \left\lvert Q_h^{\mathrm{T}} \Lambda_h^{-\frac{1}{2}}\nabla U_h\right\rvert^2 + \left\lvert Q_h^{\mathrm{T}} \Lambda_h^{-\frac{1}{2}}\;D^2 U_h\; \Lambda_h^{-\frac{1}{2}}Q_h\right\rvert^2\right) \\& \quad\quad\quad+ \sum_{i=1}^2 \det \Lambda_h^{\frac{1}{2}}\int_{P_h}\left\lvert\sum_{j=1}^2   \frac{(Q_h)_{ji}}{\sqrt{(\Lambda_h)_{jj}}}  Q_h^{\mathrm{T}} \Lambda_h^{-\frac{1}{2}}\;D^2\partial_j U_h\; \Lambda_h^{-\frac{1}{2}}Q_h\right\rvert^2 \\
&=  \det \Lambda_h^{\frac{1}{2}} \int_{P_h}\left( \left\lvert U_h\right\rvert^2 + \left\lvert  \Lambda_h^{-\frac{1}{2}}\nabla U_h\right\rvert^2 + \left\lvert  \Lambda_h^{-\frac{1}{2}}\;D^2 U_h\; \Lambda_h^{-\frac{1}{2}}\right\rvert^2\right) \\& \quad\quad\quad+ \sum_{i=1}^2 \frac{\det \Lambda_h^{\frac{1}{2}}}{(\Lambda_h)_{ii}} \int_{P_h}\left\lvert   \Lambda_h^{-\frac{1}{2}}\;D^2\partial_i U_h\; \Lambda_h^{-\frac{1}{2}}\right\rvert^2\\
&\lesssim \int_{P_h} \left(\left\lvert U_h\right\rvert^2 + \left\lvert  \nabla U_h\right\rvert^2+ \left\lvert D^2 U_h\right\rvert^2\right)+ \sum_{i=1}^2 \int_{P_h} \left\lvert D^2 \partial_i U_h\right\rvert^2 = \|U_h\|_{H^3(P_h)}^2,
\end{align*}
i.e., we have established the bound \eqref{establish this bound}. We conclude that, for $h>0$ sufficiently small, we have \eqref{pbefore2}, i.e.,
\begin{align*}
\|u_0^h\|_{H^3(\Omega)}\lesssim \|f\|_{H^1(\Omega)},
\end{align*}
where the constant is independent of $h$. 
\end{proof}

Then an $H^2(\Omega)\cap H^1_0(\Omega)$-conforming finite element approximation $u_0^{h,k}$ to the solution $u_0^h$ of \eqref{12}, that satisfies the error bound
\begin{align}\label{p2}
\left\|u_0^h-u_0^{h,k}\right\|_{H^2(\Omega)}\lesssim k \left\|u_0^h\right\|_{H^3(\Omega)}\lesssim k \left\|f\right\|_{H^1(\Omega)},
\end{align}
provides by Lemma 3.2 and the triangle inequality an approximation to $u_0$.

\begin{theorem}[$H^2$-norm approximation of $u_0$]
Assume either that $(\Omega,A,f)\in \calG^{1,2}$ or that $(\Omega,A,f)\in \calH^{1}$, and assume \eqref{pbefore2}. Then, the approximation $u_0^{h,k}$ obtained by the procedure described above satisfies
\begin{align*}
\left\|u_0-u_0^{h,k}\right\|_{H^2(\Omega)}\lesssim (h+k) \|f\|_{H^1(\Omega)}.
\end{align*}
\end{theorem}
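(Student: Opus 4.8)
The plan is to obtain the bound from a two-term triangle inequality, separating the \emph{consistency error} coming from replacing the homogenized matrix $A^0$ by its computable surrogate $A^0_h$ from the \emph{discretization error} of the $H^2$-conforming finite element solve for the constant-coefficient problem \eqref{12}. Concretely, I would write
\begin{align*}
\left\|u_0 - u_0^{h,k}\right\|_{H^2(\Omega)} \le \left\|u_0 - u_0^{h}\right\|_{H^2(\Omega)} + \left\|u_0^{h} - u_0^{h,k}\right\|_{H^2(\Omega)},
\end{align*}
and estimate the two terms separately.

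For the first term I would invoke Lemma 3.2 directly: the hypotheses $(\Omega,A,f)\in\calG^{1,2}$ (respectively $\calH^{1}$) imply $(\Omega,A,f)\in\calG^{0,2}$ (respectively $\calH^{0}$), so that for $h$ small enough $\|u_0-u_0^h\|_{H^2(\Omega)}\lesssim h\|f\|_{L^2(\Omega)}\lesssim h\|f\|_{H^1(\Omega)}$. For the second term I would use the assumed regularity and stability estimate \eqref{pbefore2} for $u_0^h$ — which holds for $h$ small in either of the two settings covered by Lemma 3.3 — together with the quasi-optimality of the $H^2(\Omega)\cap H^1_0(\Omega)$-conforming scheme applied to the elliptic constant-coefficient equation $A^0_h:D^2 u_0^h=f$, i.e. the bound \eqref{p2}: $\|u_0^h-u_0^{h,k}\|_{H^2(\Omega)}\lesssim k\|u_0^h\|_{H^3(\Omega)}\lesssim k\|f\|_{H^1(\Omega)}$, with a constant independent of $h$. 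Adding the two contributions then yields $\|u_0-u_0^{h,k}\|_{H^2(\Omega)}\lesssim (h+k)\|f\|_{H^1(\Omega)}$.

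The only genuinely delicate point — the place where one must do something rather than merely quote — is that the finite element quasi-optimality constant in \eqref{p2} be \emph{uniform in $h$}. This reduces to checking that the stability constant of the discrete problem governed by $A^0_h$ depends only on structural quantities that are controlled uniformly for small $h$, namely the ellipticity constants of $A^0_h$; by Lemma 3.1 these converge to those of $A^0$ as $h\to 0$ and hence are bounded above and below uniformly for $h$ small. One clean way to make this transparent is to diagonalize the constant symmetric positive-definite matrix $A^0_h$ and rescale, exactly as in the proof of Lemma 3.3, reducing \eqref{12} to a Laplace-type problem on a nearby domain whose geometry and coefficient bounds are $h$-independent up to $\mathcal{O}(h)$ perturbations, for which the convergence theory of $C^1$-conforming (or nonvariational $H^2$-conforming, in the Cordes/Smears--Süli framework) elements applies with trackable constants. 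Granting this, the proof is complete; I do not expect any further obstruction, since all the substantive work has already been carried out in Lemma 3.1, Lemma 3.2, and Lemma 3.3.
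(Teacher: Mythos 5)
Your proposal is correct and follows essentially the same route as the paper: the theorem is obtained there by exactly the same triangle-inequality splitting, with the first term handled by Lemma 3.2 and the second by the bound \eqref{p2} together with the assumed regularity estimate \eqref{pbefore2}. Your additional discussion of the $h$-uniformity of the quasi-optimality constant in \eqref{p2} is a sensible elaboration of a point the paper simply builds into the definition of the $H^2$-conforming scheme ("with constants independent of $h$"), so it does not constitute a different approach.
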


\begin{remark}[Improvements]
We note that if we assume that $A\in W^{2,\infty}(Y)$, then we have the following improved results.
\begin{itemize}
\item[$(i)$] Approximation of $m$: In this case, $m\in H^2(Y)$ and we have that
\begin{align*}
\inf_{\tilde{v}_h\in  \tilde{M}_h} \|m-(\tilde{v}_h+1)\|_{H^1(Y)} \leq \left\|m-\mathcal{I}_h m - \int_Y (m- \mathcal{I}_h m)\right\|_{H^1(Y)}\lesssim h \|m\|_{H^2(Y)},
\end{align*}
by choosing $\tilde{v}_h= \mathcal{I}_h m - \int_Y \mathcal{I}_h m$, and using an interpolation error bound. Therefore, Theorem 3.1 yields
\begin{align*}
\|m-m_h\|_{L^2(Y)}+h\|m-m_h\|_{H^1(Y)}\lesssim h^2 \|m\|_{H^2(Y)}.
\end{align*}
\item[$(ii)$] Approximation of $A^0$: By an interpolation error bound and the fact that $m_h$ is piecewise linear, one has
\begin{align*}
\|a_{ij}m_h-\mathcal{I}_h(a_{ij}m_h)\|_{L^1(Y)} \lesssim  h^2\|a_{ij}\|_{W^{2,\infty}(Y)}\|m_h\|_{H^1(Y)}.
\end{align*}
Therefore, the proof of Lemma 3.1 yields
\begin{align*}
\max_{1\leq i,j\leq n}\;\left\lvert a_{ij}^0-a_{ij,h}^0\right\rvert \lesssim h^2 \|A\|_{W^{2,\infty}(Y)}\|m\|_{H^2(Y)} \lesssim h^2 \|A\|_{W^{2,\infty}(Y)} .
\end{align*}
\item[$(iii)$] Approximation of $u_0$: It follows that the results of Lemma 3.2, Theorem 3.2 and Theorem 3.3 can be improved to second-order convergence in $h$, i.e.,
\begin{align*}
\left\|u_0-u_0^{h,k}\right\|_{H^s(\Omega)}\lesssim(h^2\|A\|_{W^{2,\infty}(Y)}+k) \|f\|_{H^{s-1}(\Omega)} = \mathcal{O}(h^2+k),
\end{align*}
for $s=1,2$, respectively.
\end{itemize}
\end{remark}

For the approximation of derivatives of $u_0$ of higher than second order, the post-processing method of Babu\v{s}ka in \cite{Bab70} can be used to obtain error bounds in norms involving derivatives of higher order than the energy norm (the norm natural to the problem).

For bounded convex polygonal domains $\Omega\subset \R^2$, an $H^2$-conforming approximation to the solution of \eqref{12} can be obtained as follows. Assume that $f\in H^1_0(\Omega)$ so that \eqref{pbefore2} holds. Consider a shape-regular triangulation of $\Omega$ into triangles with longest edge $k>0$, and let 
\begin{align*}
V_k\subset H^2(\Omega)\cap H^1_0(\Omega)
\end{align*}
be an appropriate finite element space. In practice, the Hsieh--Clough--Tocher element and the Argyris element can be used as $H^2$-conforming elements. Then, for $h>0$ sufficiently small, standard finite element analysis can be used to show that there is a unique function $u_0^{h,k}\in V_k$ such that
\begin{align}\label{q1}
\int_{\Omega} \left(A^0_h:D^2 u_0^{h,k}\right)\left( A^0_h:D^2 \fhi_k\right)  = \int_{\Omega}f\left( A^0_h:D^2 \fhi_k\right)\quad\quad\forall\,\fhi_k\in V_k,
\end{align} 
and that the error bound \eqref{p2} holds.

\subsection{Approximation of the Corrector}

We now address problem \eqref{6,8} and present a method for $A\in W^{2,\infty}(Y)$. To simplify the notation and the arguments, we assume that we know the invariant measure $m$ and the matrix $A^0=(a_{ij}^0)_{1\leq i,j\leq n}$ exactly instead of working with our approximation $A_h^0$. 

For a given $Y$-periodic right-hand side $g\in W^{2,\infty}(Y)$, we address the problem 
\begin{align*}
\begin{cases}
-\nabla\cdot(A\nabla\chi)+(\div A)\cdot\nabla \chi = -g \quad\text{in }Y,\\
\chi\;\text{is $Y$-periodic},\; \int_Y \chi=0.
\end{cases}
\end{align*}
Obtaining an approximation for second-order derivatives via finite elements is not straightforward since the natural solution space is $W_{\text{per}}(Y)$. We present a method of successively approximating higher derivatives. 

Let $\chi_h$ be a $W_{\text{per}}(Y)$-conforming finite element approximation to $\chi$, i.e.,
\begin{align*}
\chi_h\in V_h,\quad \int_Y A\nabla \chi_h\cdot\nabla\fhi +\int_Y \fhi\;(\div A)\cdot \nabla \chi_h = -\int_Y g\fhi\quad\quad\forall\, \fhi\in V_h,
\end{align*} 
with $V_h\subset W_{\text{per}}(Y)$ finite-dimensional, and satisfying the error estimate
\begin{align*}
\|\chi_h-\chi\|_{H^1(Y)}\lesssim h.
\end{align*}
Let $r\in \{1,\dots,n\}$ and write $\xi_r:=\partial_r \chi$. Then, using the equation
\begin{align*}
-\nabla \cdot (A\nabla \chi)+(\div A)\cdot\nabla \chi = -g\quad\text{in }Y,
\end{align*}
we find that weakly, there holds
\begin{align*}
-\nabla \cdot (A\nabla \xi_r)+(\div A)\cdot\nabla \xi_r = -\partial_k g+\nabla\cdot(\partial_k A\;\nabla \chi)- \left(\div(\partial_k A)\right)\cdot\nabla \chi \quad\text{in }Y.
\end{align*}
Further, we claim that $\xi_r\in W_{\text{per}}(Y)$. Indeed, this follows from the regularity and periodicity of $\chi$ and 
\begin{align*}
\int_Y \partial_r \chi = \int_{\partial Y} \chi \nu\cdot e_r=0.
\end{align*}
Therefore, $\xi_r\in W_{\text{per}}(Y)$ satisfies
\begin{align*}
\begin{cases}
-\nabla \cdot (A\nabla \xi_r)+(\div A)\cdot\nabla \xi_r = -\partial_r g+\nabla\cdot(\partial_r A\;\nabla \chi)- \left(\div(\partial_r A)\right)\cdot\nabla \chi \quad\text{in }Y,\\
\xi_r\;\text{is $Y$-periodic},\; \int_Y \xi_r=0.
\end{cases}
\end{align*}
Now we use our $H^1$-conforming approximation for $\chi$ for the right-hand side and use a $W_{\text{per}}(Y)$-conforming finite element method for approximating the solution $v\in W_{\text{per}}(Y)$ to the following problem:
\begin{align}\label{14}
\begin{cases}
-\nabla \cdot (A\nabla v)+(\div A)\cdot\nabla v = -\partial_r g+\nabla\cdot(\partial_r A\;\nabla \chi_h)- \left(\div(\partial_r A)\right)\cdot\nabla \chi_h -c\quad\text{in }Y,\\
v\;\text{is $Y$-periodic},\; \int_Y v=0,
\end{cases}
\end{align}
where $c$ is such that this problem admits a unique solution (such that the solvability condition \eqref{solvcond} is satisfied). By looking at the problem for $v-\xi_r$, one obtains the comparison result
\begin{align*}
\|v-\xi_r\|_{H^1(Y)}&\lesssim \|\nabla\cdot(\partial_r A\; \nabla(\chi_h-\chi))\|_{W_{\text{per}}(Y)'}+\|\left(\div(\partial_r A)\right)\cdot\nabla (\chi_h-\chi)\|_{W_{\text{per}}(Y)'}\\ &\lesssim \|A\|_{W^{2,\infty}(Y)}\|\chi_h-\chi\|_{H^1(Y)}\\&\lesssim h\|A\|_{W^{2,\infty}(Y)} = \mathcal{O}(h) .
\end{align*}
Let $v_h$ be a $W_{\text{per}}(Y)$-conforming finite element approximation of \eqref{14} satisfying
\begin{align*}
\|v_h-v\|_{H^1(Y)}\leq C h
\end{align*}
for some constant $C=C(\|A\|_{W^{2,\infty}(Y)})>0$. Then, using the triangle inequality, we obtain
\begin{align*}
\|v_h-\xi_r\|_{H^1(Y)}\leq C h
\end{align*}
for some constant $C=C(\|A\|_{W^{2,\infty}(Y)})>0$. Using this procedure for $r=1,\dots,n$, we eventually obtain approximations to derivatives of order up to two of $\chi$.

\subsection{Approximation of $u_{\eps}$}

We assume either that $(\Omega,A,f)\in \calG^{2,2}$ or that $(\Omega,A,f)\in \calH^{2}$. Let $n\in\{2,3\}$, $\eps\in (0,1]$, and assume that
\begin{align*}
u_0\in H^4(\Omega).
\end{align*} 
Then we know that \eqref{addregu0} is satisfied, and by Theorem 2.5 we have that
\begin{align}\label{new1}
\left\|u_{\eps}-u_0-\eps^2\sum_{i,j=1}^n \chi_{ij}\left(\frac{\cdot}{\eps}\right)\partial^2_{ij} u_0\right\|_{H^2(\Omega)}\lesssim \sqrt{\eps}\;\|u_0\|_{W^{2,\infty}(\Omega)} + \eps\|u_0\|_{H^4(\Omega)},
\end{align}
where $u_0$ is the solution to the homogenized problem, and $\chi_{ij}$ are the corrector functions given as the solutions to \eqref{6,8}. This result can be used to construct an approximation of $u_{\eps}$, i.e., to the solution of problem \eqref{1} for small $\eps$. We note that \eqref{new1} implies that
\begin{align}\label{new2}
\begin{split}
&\|u_{\eps}-u_0\|_{H^1(\Omega)}+\sum_{k,l=1}^n \left\|\partial_{kl}^2u_{\eps}-\left(\partial_{kl}^2u_0 + \sum_{i,j=1}^n \left(\partial_{kl}^2\chi_{ij}\right)\left(\frac{\cdot}{\eps}\right)\partial^2_{ij} u_0\right)\right\|_{L^2(\Omega)}\\ &\lesssim \sqrt{\eps}\;\|u_0\|_{W^{2,\infty}(\Omega)} + \eps\|u_0\|_{H^4(\Omega)}.
\end{split}
\end{align}
This leads to the following approximation result for $u_{\eps}$.

\begin{theorem}[Approximation of $u_{\eps}$]
In the situation described above, let $(u_{0,h})_{h>0}\subset H^2(\Omega)$ be a family of $H^2$-conforming approximations for $u_0$ satisfying the error bound
\begin{align*}
\|u_0-u_{0,h}\|_{H^2(\Omega)}\lesssim h \|f\|_{H^1(\Omega)},
\end{align*}
and for $1\leq i,j,k,l\leq n$, let $(z_{ij,h}^{kl})_{h>0}\subset L^2_{\text{per}}(Y)$ be a family of $L^2$ approximations for $\partial_{kl}^2\chi_{ij}$ satisfying the error bound
\begin{align*}
\|\partial_{kl}^2\chi_{ij}-z_{ij,h}^{kl}\|_{L^2(Y)}\lesssim h.
\end{align*}
Then, by writing
\begin{align*}
u_{\eps,h}^{kl}:=\partial_{kl}^2u_{0,h} + \sum_{i,j=1}^n z_{ij,h}^{kl}\left(\frac{\cdot}{\eps}\right)\partial^2_{ij} u_{0,h},
\end{align*}
we have that
\begin{align*}
&\|u_{\eps}-u_{0,h}\|_{H^1(\Omega)}+\sum_{k,l=1}^n \left\|\partial_{kl}^2u_{\eps}-u_{\eps,h}^{kl}\right\|_{L^1(\Omega)}\\&\lesssim\left(\sqrt{\eps}+h\right)\|u_0\|_{W^{2,\infty}(\Omega)} + \eps\|u_0\|_{H^4(\Omega)} + h\|f\|_{H^1(\Omega)}   .
\end{align*}
\end{theorem}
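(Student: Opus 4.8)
The plan is to estimate the two terms on the left-hand side separately, in each case inserting the exact homogenized quantities $u_0$ and $\chi_{ij}$ as intermediaries and invoking the corrector estimate \eqref{new2}. For the $H^1$-part this is immediate: by the triangle inequality $\|u_\eps - u_{0,h}\|_{H^1(\Omega)} \leq \|u_\eps - u_0\|_{H^1(\Omega)} + \|u_0 - u_{0,h}\|_{H^1(\Omega)}$, and the first summand is bounded by $\sqrt{\eps}\,\|u_0\|_{W^{2,\infty}(\Omega)} + \eps\|u_0\|_{H^4(\Omega)}$ via \eqref{new2}, while the second is bounded by $\|u_0-u_{0,h}\|_{H^2(\Omega)}\lesssim h\|f\|_{H^1(\Omega)}$ by hypothesis. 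That already produces the $\sqrt{\eps}+h$ and $h\|f\|_{H^1(\Omega)}$ contributions.

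For the second-derivative terms I would, for fixed $k,l$, write
\begin{align*}
\partial_{kl}^2 u_\eps - u_{\eps,h}^{kl} = \Big(\partial_{kl}^2 u_\eps - \partial_{kl}^2 u_0 - \sum_{i,j} (\partial_{kl}^2\chi_{ij})\big(\tfrac{\cdot}{\eps}\big)\partial_{ij}^2 u_0\Big) + \Big(\partial_{kl}^2 u_0 - \partial_{kl}^2 u_{0,h}\Big) + \sum_{i,j}\Big((\partial_{kl}^2\chi_{ij})\big(\tfrac{\cdot}{\eps}\big)\partial_{ij}^2 u_0 - z_{ij,h}^{kl}\big(\tfrac{\cdot}{\eps}\big)\partial_{ij}^2 u_{0,h}\Big).
\end{align*}
The first bracket is controlled in $L^2(\Omega)$, hence after summing over $k,l$ in $L^1(\Omega)$ (using $\|\cdot\|_{L^1(\Omega)}\lesssim \|\cdot\|_{L^2(\Omega)}$ on the bounded domain $\Omega$), by \eqref{new2}. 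The second bracket is bounded in $L^1$ by $\|u_0-u_{0,h}\|_{H^2(\Omega)}\lesssim h\|f\|_{H^1(\Omega)}$. For the third bracket I split further, adding and subtracting $z_{ij,h}^{kl}(\cdot/\eps)\,\partial_{ij}^2 u_0$: the difference $(\partial_{kl}^2\chi_{ij} - z_{ij,h}^{kl})(\cdot/\eps)\,\partial_{ij}^2 u_0$ is estimated by Hölder with the exponent pair $(2,2)$, using that $\partial_{ij}^2 u_0 \in L^\infty(\Omega)$ (which we have since $u_0\in H^4(\Omega)\subset W^{2,\infty}(\Omega)$ for $n\in\{2,3\}$) together with the periodic rescaling identity $\|w(\cdot/\eps)\|_{L^2(\Omega)}\lesssim \|w\|_{L^2(Y)}$ for $Y$-periodic $w$ and $\eps\in(0,1]$, giving a bound $\lesssim h\,\|u_0\|_{W^{2,\infty}(\Omega)}$; and the difference $z_{ij,h}^{kl}(\cdot/\eps)(\partial_{ij}^2 u_0 - \partial_{ij}^2 u_{0,h})$ is bounded in $L^1$ by $\|z_{ij,h}^{kl}\|_{L^\infty}\,\|\partial_{ij}^2 u_0 - \partial_{ij}^2 u_{0,h}\|_{L^1(\Omega)}$ — but since the $z_{ij,h}^{kl}$ are only $L^2_{\text{per}}(Y)$, I must instead use Hölder the other way, bounding this by $\|z_{ij,h}^{kl}(\cdot/\eps)\|_{L^2(\Omega)}\,\|\partial_{ij}^2(u_0-u_{0,h})\|_{L^2(\Omega)}$, where $\|z_{ij,h}^{kl}(\cdot/\eps)\|_{L^2(\Omega)}\lesssim \|z_{ij,h}^{kl}\|_{L^2(Y)}\lesssim \|\partial_{kl}^2\chi_{ij}\|_{L^2(Y)} + h \lesssim 1$, so this term is $\lesssim h\|f\|_{H^1(\Omega)}$. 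Collecting all contributions and summing over the finitely many indices yields the claimed bound.

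The main obstacle — and the only genuinely delicate point — is the handling of the third bracket, where the low regularity of the approximations $z_{ij,h}^{kl}$ (merely $L^2_{\text{per}}(Y)$, with no uniform $L^\infty$ control) forces one to pair the $L^2$-bound on $z_{ij,h}^{kl}$ against an $L^2$-bound on $u_0 - u_{0,h}$ rather than the other way round; this is why the hypothesis is stated in terms of $\|u_0-u_{0,h}\|_{H^2(\Omega)}$ and why the final estimate is in the $L^1(\Omega)$-norm for the second derivatives rather than $L^2(\Omega)$. One should also be a little careful to verify the periodic-rescaling bound $\|w(\cdot/\eps)\|_{L^2(\Omega)}\lesssim \|w\|_{L^2(Y)}$ with a constant independent of $\eps\in(0,1]$ (covering $\Omega$ by $O(\eps^{-n})$ translated copies of $\eps Y$ and changing variables on each), and to note that $u_0\in W^{2,\infty}(\Omega)$ here follows from $u_0\in H^4(\Omega)$ by Sobolev embedding precisely because $n\leq 3$, as remarked after Theorem 2.5. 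Everything else is the triangle inequality and routine Hölder estimates.
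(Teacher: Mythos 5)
Your proposal is correct and follows essentially the same route as the paper: triangle inequality for the $H^1$ part, the same three-way splitting of $\partial^2_{kl}u_\eps - u^{kl}_{\eps,h}$, the same Hölder pairings ($L^2$-bound on the rescaled corrector error against $\|u_0\|_{W^{2,\infty}(\Omega)}$, and $\|z^{kl}_{ij,h}(\cdot/\eps)\|_{L^2(\Omega)}$ against $\|u_0-u_{0,h}\|_{H^2(\Omega)}$), and the same $\eps$-independent periodic rescaling bound via covering $\Omega$ by $\mathcal{O}(\eps^{-n})$ cells. The only cosmetic difference is that you bound $\|z^{kl}_{ij,h}\|_{L^2(Y)}$ directly by $\|\partial^2_{kl}\chi_{ij}\|_{L^2(Y)}+h$, while the paper uses the $L^{\infty}(Y)$ bound on $\partial^2_{kl}\chi_{ij}$ after rescaling; both are equivalent.
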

\begin{proof}
We use \eqref{new2} and the triangle inequality to obtain
\begin{align*}
\|u_{\eps}-u_{0,h}\|_{H^1(\Omega)}&\leq \|u_{\eps}-u_0\|_{H^1(\Omega)}+ \|u_0-u_{0,h}\|_{H^1(\Omega)}\\&\lesssim\sqrt{\eps}\;\|u_0\|_{W^{2,\infty}(\Omega)} + \eps\|u_0\|_{H^4(\Omega)}+ h \|f\|_{H^1(\Omega)},
\end{align*}
and for $1\leq k,l\leq n$,
\begin{align*}
\left\|\partial_{kl}^2u_{\eps}-u_{\eps,h}^{kl}\right\|_{L^1(\Omega)}&\lesssim\sqrt{\eps}\;\|u_0\|_{W^{2,\infty}(\Omega)} + \eps\|u_0\|_{H^4(\Omega)}+ h \|f\|_{H^1(\Omega)}\\ &\quad+\sum_{i,j=1}^n\left\| \left(\partial_{kl}^2\chi_{ij}\right)\left(\frac{\cdot}{\eps}\right)\partial^2_{ij} u_0 -  z_{ij,h}^{kl}\left(\frac{\cdot}{\eps}\right)\partial^2_{ij} u_{0,h}   \right\|_{L^1(\Omega)}.
\end{align*}
It remains to study the last term on the right-hand side of the above inequality. For fixed $1\leq i,j\leq n$, we use again the triangle inequality to obtain
\begin{align*}
&\left\| \left(\partial_{kl}^2\chi_{ij}\right)\left(\frac{\cdot}{\eps}\right)\partial^2_{ij} u_0 -  z_{ij,h}^{kl}\left(\frac{\cdot}{\eps}\right)\partial^2_{ij} u_{0,h}   \right\|_{L^1(\Omega)}\\ &\leq \left\|  z_{ij,h}^{kl}\left(\frac{\cdot}{\eps}\right)\left(\partial^2_{ij} u_{0}-\partial^2_{ij} u_{0,h}  \right) \right\|_{L^1(\Omega)}+\left\| \left(\partial_{kl}^2\chi_{ij}-z_{ij,h}^{kl}\right)\left(\frac{\cdot}{\eps}\right) \partial^2_{ij} u_{0} \right\|_{L^1(\Omega)}\\
&\lesssim \left\|  z_{ij,h}^{kl}\left(\frac{\cdot}{\eps}\right)\right\|_{L^2(\Omega)}\|u_0-u_{0,h}\|_{H^2(\Omega)} + \left\| \left(\partial_{kl}^2\chi_{ij}-z_{ij,h}^{kl}\right)\left(\frac{\cdot}{\eps}\right) \right\|_{L^2(\Omega)}\|u_0\|_{W^{2,\infty}(\Omega)}\\
&\lesssim  h\left(\left\|  z_{ij,h}^{kl}\left(\frac{\cdot}{\eps}\right)\right\|_{L^2(\Omega)}  \|f\|_{H^1(\Omega)} +  \|u_0\|_{W^{2,\infty}(\Omega)}\right).
\end{align*}
In the last step, we used that by the transformation formula and periodicity (cover $\Omega/\eps$ by $\mathcal{O}(\eps^{-n})$ many cells of unit length), there holds
\begin{align}\label{new3}
\left\| \left(\partial_{kl}^2\chi_{ij}-z_{ij,h}^{kl}\right)\left(\frac{\cdot}{\eps}\right) \right\|_{L^2(\Omega)}\lesssim \left\| \partial_{kl}^2\chi_{ij}-z_{ij,h}^{kl} \right\|_{L^2(Y)}\lesssim h.
\end{align}
We claim that
\begin{align*}
\left\|  z_{ij,h}^{kl}\left(\frac{\cdot}{\eps}\right)\right\|_{L^2(\Omega)}\lesssim h+1.
\end{align*}
Indeed, we use the triangle inequality, \eqref{new3} and the fact that $\chi_{ij}\in W^{2,\infty}(Y)$ to obtain
\begin{align*}
\left\|  z_{ij,h}^{kl}\left(\frac{\cdot}{\eps}\right)\right\|_{L^2(\Omega)}&\leq \left\| \left(\partial_{kl}^2\chi_{ij}-z_{ij,h}^{kl}\right)\left(\frac{\cdot}{\eps}\right) \right\|_{L^2(\Omega)}+\left\|\partial_{kl}^2\chi_{ij}  \right\|_{L^{\infty}(Y)}\lesssim h+1.
\end{align*}
\end{proof}

The approximations of $u_0$ and the corrector functions can be obtained as described in Section 3.1 and 3.2. Let us conclude this section by remarking that if the second derivatives of the corrector functions are approximated in the space $L^{\infty}(Y)$ or if the solution to the homogenized problem is approximated in the space $W^{2,\infty}(\Omega)$, then one obtains by a similar proof an approximation result for the second derivatives of $u_{\eps}$ in $L^2(\Omega)$.

\begin{remark}
If $(z_{ij,h}^{kl})_{h>0}\subset L^{\infty}_{\text{per}}(Y)$ is a family of $L^{\infty}$ approximations for $\partial_{kl}^2\chi_{ij}$ satisfying the error bound
\begin{align*}
\|\partial_{kl}^2\chi_{ij}-z_{ij,h}^{kl}\|_{L^{\infty}(Y)}=\mathcal{O}( h),
\end{align*}
and $(u_{0,h})_{h>0}$ is as in Theorem 3.4, then we have that
\begin{align*}
\|u_{\eps}-u_{0,h}\|_{H^1(\Omega)}+\sum_{k,l=1}^n \left\|\partial_{kl}^2u_{\eps}-u_{\eps,h}^{kl}\right\|_{L^2(\Omega)}=\mathcal{O}(\sqrt{\eps}+h).
\end{align*}
The same holds true when $(u_{0,h})_{h>0}\subset W^{2,\infty}(\Omega)$ is a family of $W^{2,\infty}$-conforming approximations for $u_0$ satisfying the error bound
\begin{align*}
\|u_0-u_{0,h}\|_{W^{2,\infty}(\Omega)}=\mathcal{O}(h),
\end{align*}
and $(z_{ij,h}^{kl})_{h>0}$ is as in Theorem 3.4.
\end{remark}

\subsection{Nonuniformly Oscillating Coefficients}

In this section, we discuss the case of nonuniformly oscillating coefficients, i.e., coefficients depending on $x$ and $\frac{x}{\eps}$.
We consider the problem
\begin{align}\label{Nonuniform osc}
\left\{\begin{aligned}
A\left(\cdot,\frac{\cdot}{\eps}\right):D^2 u_{\eps}&=f\quad\text{in }\Omega,\\
\hfill u_{\eps} &= 0\quad\text{on }\partial\Omega,
\end{aligned}\right.
\end{align}
where the triple $(\Omega,A,f)$ satisfies one of the following sets of assumptions.

\begin{definition}[Sets of assumptions $\calG,\calH$]
For $m\in \N_0$, we write
\begin{itemize}
\item[$(i)$] $(\Omega,A,f)\in \calG$ if and only if $\Omega\subset \R^n$ is a bounded $C^{2,\gamma}$ domain, $f\in L^2(\Omega)$, and $A=A^{\mathrm{T}}:\Omega\times \R^n\rightarrow\R^{n\times n}$ satisfies
\begin{align}\label{assnu}
\begin{cases}A=A(x,y)\in W^{2,\infty}(\Omega; W^{1,q}(Y) )\text{ for some }q\in(n,\infty],\\
A(x,\cdot)\text{ is } \text{$Y$-periodic},\\
\exists\; \lambda,\Lambda>0:\; \lambda \lvert \xi\rvert^2\leq  A(x,y)\xi\cdot \xi\leq \Lambda \lvert\xi\rvert^2\quad\forall\,\xi,y\in\R^n,\, x\in \Omega.
\end{cases}
\end{align} 
\item[$(ii)$] $(\Omega,A,f)\in \calH$ if and only if $\Omega\subset \R^n$ is a bounded convex domain, $f\in L^2(\Omega)$, and $A=A^{\mathrm{T}}:\Omega\times \R^n\rightarrow\R^{n\times n}$ satisfies \eqref{assnu} and 
\begin{align}\label{assnucordes}
\exists\;\delta\in (0,1]: \frac{\lvert A(x,y)\rvert^2}{(\mathrm{tr}A(x,y))^2}\leq \frac{1}{n-1+\delta}\quad\forall\, (x,y)\in \Omega\times \R^n.
\end{align}
\end{itemize}
\end{definition}

In view of Remark 2.1, we see that the Cordes condition \eqref{assnucordes} is always satisfied when $n=2$. Well-posedness to the problem \eqref{Nonuniform osc} is guaranteed by the following theorem, see \cite[Theorem 9.15]{GT01} and \cite[Theorem 3]{SS13}.

\begin{theorem}[Existence and uniqueness of strong solutions]
Assume either that $(\Omega,A,f)\in \calG$, or that $(\Omega,A,f)\in \calH$. Then, for any $\eps>0$, the problem \eqref{Nonuniform osc} admits a unique solution $u_{\eps}\in H^2(\Omega)\cap H^1_0(\Omega)$.
\end{theorem}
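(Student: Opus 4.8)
The plan is to reduce both cases to the scalar well-posedness results already quoted in the paper, after checking that, for each fixed $\eps>0$, the frozen coefficient $A_\eps(x):=A(x,\frac{x}{\eps})$ inherits the structure required by those results.

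First I would record the regularity of the coefficient. Since $A\in W^{2,\infty}(\Omega;W^{1,q}(Y))$ with $q>n$, the Sobolev embedding $W^{1,q}(Y)\hookrightarrow C^{0,\alpha}(\R^n)$ for some $\alpha\in(0,1]$ (using $Y$-periodicity to extend to all of $\R^n$), combined with $W^{2,\infty}(\Omega)\hookrightarrow C^{1,1}(\overline{\Omega})$, shows that $A$ agrees with a function that is jointly continuous --- indeed Hölder continuous --- on $\overline{\Omega}\times\R^n$. Hence, for fixed $\eps>0$, the map $x\mapsto A_\eps(x)=A(x,\frac{x}{\eps})$ is continuous on $\overline{\Omega}$, symmetric, and uniformly elliptic with the same constants $\lambda,\Lambda$ as in \eqref{assnu}.

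In the case $(\Omega,A,f)\in\calG$, the domain $\Omega$ is a bounded $C^{2,\gamma}$ domain (in particular $C^{1,1}$), $A_\eps\in C^0(\overline{\Omega})$ is uniformly elliptic, and $f\in L^2(\Omega)$; then \cite[Theorem 9.15]{GT01}, together with the maximum principle for uniqueness, yields a unique strong solution $u_\eps\in W^{2,2}(\Omega)\cap W^{1,2}_0(\Omega)=H^2(\Omega)\cap H^1_0(\Omega)$ of \eqref{Nonuniform osc}. In the case $(\Omega,A,f)\in\calH$, the domain is merely convex, so instead I would invoke the Cordes-condition framework of Smears and Süli: evaluating \eqref{assnucordes} along $y=\frac{x}{\eps}$ gives $\frac{\lvert A_\eps(x)\rvert^2}{(\mathrm{tr}\,A_\eps(x))^2}\leq\frac{1}{n-1+\delta}$ for all $x\in\Omega$, i.e. $A_\eps$ satisfies the Cordes condition with the same $\delta\in(0,1]$ (and by Remark 2.1 this holds automatically when $n=2$). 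Since $\Omega$ is bounded and convex and $f\in L^2(\Omega)$, \cite[Theorem 3]{SS13} then provides a unique $u_\eps\in H^2(\Omega)\cap H^1_0(\Omega)$ solving \eqref{Nonuniform osc}.

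There is no genuine obstacle here; the only step deserving care is the joint continuity of $A$ in $(x,y)$, since the cited scalar theorems require coefficients that are continuous (not merely measurable in $x$) up to the closure of the domain, so one must confirm that the mixed regularity $W^{2,\infty}$ in $x$ and $W^{1,q}$ in $y$ genuinely yields a continuous single-variable coefficient $x\mapsto A(x,\frac{x}{\eps})$.
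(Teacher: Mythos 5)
Your proposal is correct and follows essentially the same route as the paper, which justifies this theorem precisely by appealing to \cite[Theorem 9.15]{GT01} for the $\calG$ case and \cite[Theorem 3]{SS13} (Cordes framework) for the $\calH$ case; your verification that $W^{2,\infty}(\Omega;W^{1,q}(Y))$ with $q>n$ yields a jointly (H\"older) continuous representative, so that $x\mapsto A(x,\tfrac{x}{\eps})$ is continuous, symmetric, uniformly elliptic and, in the $\calH$ case, satisfies the Cordes condition with the same $\delta$, is exactly the hypothesis check the paper leaves implicit.
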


As in Section 2, uniform a priori estimates for the solution to \eqref{Nonuniform osc} allow passage to the limit in equation \eqref{Nonuniform osc}, see \cite{BBM86,BLP11}. The coefficient matrix of the homogenized problem now depends on the slow variable $x$, and is obtained by integrating against an invariant measure. Corrector results can then be shown as before. 

\begin{theorem}[Nonuniformly oscillating coefficients]
Assume that $\eps\in (0,1]$ and either that $(\Omega,A,f)\in \calG$, or that $(\Omega,A,f)\in \calH$. Then the following assertions hold.\begin{itemize}
\item[$(i)$] Uniform a priori estimate: The solution $u_{\eps}\in H^2(\Omega)\cap H^1_0(\Omega)$ to \eqref{Nonuniform osc} satisfies
\begin{align*}
\|u_{\eps}\|_{H^2(\Omega)}\lesssim \|f\|_{L^2(\Omega)}.
\end{align*}
\item[$(ii)$] Homogenization: The solution $u_{\eps}\in H^2(\Omega)\cap H^1_0(\Omega)$ to \eqref{Nonuniform osc} converges weakly in $H^2(\Omega)$ to the solution $u_0\in H^2(\Omega)\cap H^1_0(\Omega)$ of the homogenized problem
\begin{align}\label{nuohomequ}
\left\{
\begin{aligned}
A^0:D^2 u_{0}&=f\quad\text{in }\Omega,\\
\hfill u_{0} &= 0\quad\text{on }\partial\Omega,
\end{aligned}\right.
\end{align}
with $A^0:\Omega\rightarrow \R^{n\times n}$ given by
\begin{align*}
A^0(x):=\int_Y A(x,\cdot)m(x,\cdot),
\end{align*}
where $m=m(x,y)$ is the unique function $m:\Omega\times \R^n\rightarrow \R$ with $m\in C(\bar{\Omega}\times \R^n)$, $0<\bar{m}\leq m\leq M$ for some constants $\bar{m},M>0$, such that 
\begin{align*}
\begin{cases}
D^2:\left(A(x,\cdot)m(x,\cdot)\right)=0\quad\text{in }Y,\\
m(x,\cdot) \;\text{is $Y$-periodic},\;\int_Y m(x,\cdot)=1,
\end{cases}
\end{align*}
for any fixed $x\in \Omega$. The function $m$ is called the invariant measure.
\item[$(iii)$] Corrector estimate: Assume that $f\in H^2(\Omega)$ and $u_0\in H^4(\Omega)\cap W^{2,\infty}(\Omega)$.
Introducing the corrector function $\chi_{ij}$, $1\leq i,j\leq n$, as the solution to 
\begin{align*}
\begin{cases}
A(x,y):D_y^2 \chi_{ij}(x,y) = a_{ij}^0(x)-a_{ij}(x,y) ,\quad\,(x,y)\in \Omega\times Y,\\
\chi_{ij}(x,\cdot)\;\text{is $Y$-periodic},\;\int_Y \chi_{ij}(x,\cdot)=0,
\end{cases}
\end{align*}
we have that
\begin{align*}
\left\|u_{\eps}-u_0-\eps^2\sum_{i,j=1}^n \chi_{ij}\left(\cdot,\frac{\cdot}{\eps}\right)\partial^2_{ij} u_0\right\|_{H^2(\Omega)}\lesssim \sqrt{\eps}\|u_0\|_{W^{2,\infty}(\Omega)} + \eps\|u_0\|_{H^4(\Omega)}.
\end{align*}
\end{itemize}
\end{theorem}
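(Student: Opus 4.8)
The plan is to reproduce, fibrewise in the macroscopic variable $x$, the three‑step programme of Section~2: a uniform a~priori estimate (via the divergence‑form transformation, or the Cordes argument), a compactness‑and‑identification argument for the homogenization limit, and a two‑scale expansion with a boundary‑layer correction for the corrector estimate. For part~(i): if $(\Omega,A,f)\in\calH$, then \eqref{assnucordes} says that $A(\cdot,\cdot/\eps)$ satisfies the Cordes condition with the \emph{same} $\delta\in(0,1]$ for every $\eps>0$, so the argument of \cite[Theorem~3]{SS13} used in the proof of Theorem~2.2 gives $\|u_\eps\|_{H^2(\Omega)}\lesssim\|\gamma(\cdot,\cdot/\eps)\|_{L^\infty(\R^n)}\|f\|_{L^2(\Omega)}$ with $\gamma=\mathrm{tr}A/|A|^2$ bounded above and below by \eqref{assnu}, hence the claim. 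If $(\Omega,A,f)\in\calG$, I apply the transformation of Section~2.2 for each fixed $x$: let $m(x,\cdot)$ be the invariant measure, let $v_l(x,\cdot)\in W_{\mathrm{per}}(Y)$ solve $-\Delta_y v_l=\div_y(A(x,\cdot)m(x,\cdot))\cdot e_l$, set $b_{ij}=\partial_{y_i}v_j-\partial_{y_j}v_i$ and $A^{\div}(x,y):=A(x,y)m(x,y)+B(x,y)$. Then $\div_y A^{\div}(x,\cdot)=0$ and $B(x,\cdot)$ is skew‑symmetric, $Y$‑periodic with zero mean, so that (using $B:D^2u_\eps=0$ and that the formal $\eps^{-1}$ term disappears because $\div_y A^{\div}=0$) problem \eqref{Nonuniform osc} is equivalent to
\begin{align*}
\nabla\cdot\!\left(A^{\div}\!\left(\cdot,\tfrac{\cdot}{\eps}\right)\nabla u_\eps\right)=m\!\left(\cdot,\tfrac{\cdot}{\eps}\right)f+(\div_x A^{\div})\!\left(\cdot,\tfrac{\cdot}{\eps}\right)\!\cdot\nabla u_\eps\ \text{ in }\Omega,\qquad u_\eps=0\ \text{ on }\partial\Omega.
\end{align*}
This is a divergence‑form problem whose principal part is $\eps^{-1}$‑divergence‑free, Hölder continuous and uniformly elliptic, and whose drift is bounded \emph{independently of} $\eps$ (here $A\in W^{2,\infty}(\Omega;W^{1,q}(Y))$ and the resulting $x$‑regularity of $m$, $B$ are used); a uniform $W^{2,p}$ estimate, and in particular a uniform $H^2$ estimate, then follows as in the proof of Theorem~2.2, now invoking a version of \cite[Theorem~D]{AL91} that admits bounded lower‑order terms (or a freezing/perturbation argument in $x$), as in \cite{AL89,BBM86,BLP11}.

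For part~(ii): by part~(i), the reflexivity of $H^2(\Omega)$, the compact embedding $H^2(\Omega)\hookrightarrow H^1(\Omega)$ and continuity of the trace, a (non‑relabelled) subsequence satisfies $u_\eps\weak u_0$ weakly in $H^2(\Omega)$ and $u_\eps\to u_0$ strongly in $H^1_0(\Omega)$. Passing to the limit in the weak form of the divergence‑form equation above, using $A^{\div}(\cdot,\cdot/\eps)\weakstar\int_Y A^{\div}(x,\cdot)=\int_Y A(x,\cdot)m(x,\cdot)=A^0(x)$ weakly‑$\ast$ in $L^\infty(\Omega)$ (the mean of $B$ vanishing), $\nabla u_\eps\to\nabla u_0$ strongly in $L^2(\Omega)$, $m(\cdot,\cdot/\eps)\weakstar 1$ and $(\div_x A^{\div})(\cdot,\cdot/\eps)\weakstar\div_x A^0$, one obtains $\nabla\cdot(A^0\nabla u_0)=f+(\div_x A^0)\cdot\nabla u_0$, that is $A^0:D^2u_0=f$, with $u_0\in H^2(\Omega)\cap H^1_0(\Omega)$. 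Since $\int_Y m(x,\cdot)=1$ one has $\lambda I\le A^0(x)\le\Lambda I$, and $A^0$ is continuous in $x$; in case $\calH$ with $n=2$ this suffices by Remark~2.1, and for $n\ge3$ the relevant structural bound is inherited as in \cite{SS13}, so \eqref{nuohomequ} has a unique solution in $H^2(\Omega)\cap H^1_0(\Omega)$ and the whole family $(u_\eps)$ converges.

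For part~(iii): the corrector $\chi_{ij}(x,\cdot)$ is well defined because its solvability condition $\langle a^0_{ij}(x)-a_{ij}(x,\cdot),m(x,\cdot)\rangle_{L^2(Y)}=0$ is precisely the identity $a^0_{ij}(x)=\int_Y a_{ij}(x,\cdot)m(x,\cdot)$. Since $A(x,\cdot)\in W^{1,q}(Y)\hookrightarrow C^{0,\alpha}(Y)$ with norms bounded uniformly in $x$, Schauder theory gives $\chi_{ij}(x,\cdot)\in C^{2,\alpha}(Y)$ with uniform bounds, and differentiating the corrector equation in $x$ (the required solvability being automatic since $\langle A(x,\cdot):D_y^2\psi,m(x,\cdot)\rangle_{L^2(Y)}=0$ for periodic $\psi$) and using $A\in W^{2,\infty}(\Omega;W^{1,q}(Y))$ yields $\chi_{ij}\in W^{2,\infty}(\Omega;W^{2,\infty}(Y))$; in particular $\chi_{ij}$, $\partial_x\chi_{ij}$, $\partial^2_x\chi_{ij}$, $\partial_y\chi_{ij}$, $\partial_x\partial_y\chi_{ij}$, $\partial^2_y\chi_{ij}$ are all bounded. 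Setting $\tilde u_\eps:=u_0+\eps^2\sum_{i,j=1}^n\chi_{ij}(\cdot,\cdot/\eps)\partial^2_{ij}u_0$ and expanding $A(\cdot,\cdot/\eps):D^2\tilde u_\eps$ by the chain and product rules, the $\eps^{-2}$ contribution equals $\sum_{k,l}(a^0_{kl}-a_{kl}(\cdot,\cdot/\eps))\partial^2_{kl}u_0=f-A(\cdot,\cdot/\eps):D^2u_0$, which combines with $A(\cdot,\cdot/\eps):D^2u_0$ to give $f$, while every remaining term carries a net positive power of $\eps$, so $A(\cdot,\cdot/\eps):D^2\tilde u_\eps=f+\eps R_\eps$ with $\|R_\eps\|_{L^2(\Omega)}\lesssim\|u_0\|_{H^4(\Omega)}$, and $\tilde u_\eps=\eps^2\sum_{i,j}\chi_{ij}(\cdot,\cdot/\eps)\partial^2_{ij}u_0$ on $\partial\Omega$. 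From here the argument is identical to the proofs of Theorems~2.4 and~2.5: with the boundary corrector $\theta_\eps$ and a cut‑off $\eta$ supported in the $\eps$‑neighbourhood of $\partial\Omega$, one sets $\tilde\theta_\eps:=\theta_\eps+\eta\sum_{i,j}\chi_{ij}(\cdot,\cdot/\eps)\partial^2_{ij}u_0$, computes $A(\cdot,\cdot/\eps):D^2\tilde\theta_\eps=\eps^{-2}S_1+\eps^{-1}S_2+S_3$ with each $S_m$ supported in the $\eps$‑layer and $\|S_m\|_{L^2(\Omega)}$ estimated exactly as there, and uses the uniform estimate of part~(i) to bound $\|\tilde\theta_\eps\|_{H^2(\Omega)}$ and then $\|\eps^2\theta_\eps\|_{H^2(\Omega)}\lesssim\sqrt\eps\,\|u_0\|_{W^{2,\infty}(\Omega)}+\eps\|u_0\|_{H^4(\Omega)}$, whence the claim.

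The genuinely new difficulties compared with Section~2 are two. First, in case $\calG$, the uniform $W^{2,p}$ estimate, since the divergence‑form transformation now leaves a nonzero (though $\eps$‑independent and bounded) drift $(\div_x A^{\div})(\cdot,\cdot/\eps)\cdot\nabla u_\eps$, so \cite[Theorem~D]{AL91} cannot be quoted verbatim and one needs a variant admitting lower‑order terms or a perturbation argument in the slow variable. Second, the joint $(x,y)$‑regularity of the correctors — especially the uniform‑in‑$x$ bounds on $\partial^2_y\chi_{ij}$ and $\partial_x\partial_y\chi_{ij}$ — which is exactly what keeps the remainder $R_\eps$ of order $\eps\|u_0\|_{H^4(\Omega)}$. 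I expect the first to be the principal technical obstacle; note that it disappears entirely in case $\calH$, where \cite[Theorem~3]{SS13} applies directly to $A(\cdot,\cdot/\eps)$, and the remaining steps are essentially bookkeeping.
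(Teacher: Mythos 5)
Your proposal is correct in outline and, for parts (ii) and (iii), follows essentially the paper's strategy; the genuine differences are in part (i), case $\calG$, and in how the limit is identified in (ii). For (i), case $\calH$, your argument is the paper's verbatim. For case $\calG$, the paper does not transform to divergence form at all: it freezes the slow variable and applies the uniform estimate of Theorem~2.2 to the operators $L_{x_0}:=A\left(x_0,\frac{\cdot}{\eps}\right):D^2$, handling the $x$-dependence by a perturbation/covering argument (available because $A\in W^{2,\infty}(\Omega;W^{1,q}(Y))$ is uniformly continuous in $x$ uniformly in $y$). Your primary route --- the fibrewise transformation $A^{\div}(x,y)=A(x,y)m(x,y)+B(x,y)$ followed by a drift-augmented version of \cite[Theorem D]{AL91} --- is the one place where real extra work would be needed: no such variant can be quoted off the shelf, and absorbing the bounded drift requires an $\eps$-uniform control of $\|\nabla u_{\eps}\|_{L^p(\Omega)}$ which is not immediate for $f\in L^2(\Omega)$ when $n\geq 3$; your parenthetical ``freezing/perturbation argument in $x$'' is precisely the paper's fix, and it also avoids having to differentiate $m(x,\cdot)$ and $B(x,\cdot)$ in $x$ at this stage. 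For (ii), the paper multiplies the equation by $m\left(\cdot,\frac{\cdot}{\eps}\right)$ and uses the transformation of \cite{BBM86}, obtaining a double-divergence identity in $\tilde{A}:=Am$ (involving $\div_x\tilde{A}$ and $D^2_x:\tilde{A}$) in which the oscillatory factors enter either under divergences acting on test functions or as weak-$\ast$ convergent multipliers; your alternative --- passing to the limit in the weak form of the $A^{\div}$ equation with the drift $\left[\div_x A^{\div}\right]^{\eps}\cdot\nabla u_{\eps}$ --- is also legitimate, since weak-$\ast$ convergence of the coefficients against the strongly convergent gradients identifies $A^0:D^2u_0=f$, but it needs the additional ingredient that the skew corrector $B(x,\cdot)$ exists for every $x$ and depends Lipschitz-continuously on $x$, so that $\div_x A^{\div}$ is bounded and $\left[\div_x A^{\div}\right]^{\eps}\weakstar \div_x A^0$; the paper's identity only uses $x$-derivatives of $Am$. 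Part (iii) coincides with the paper's proof: the paper likewise reduces to Theorems~2.4 and~2.5, the only new input being exactly the $L^{\infty}$ bounds on $\chi_{kl}^{\eps}$, $\left[\partial_{x_i}\chi_{kl}\right]^{\eps}$, $\left[\partial_{y_i}\chi_{kl}\right]^{\eps}$, $\left[\partial^2_{x_iy_j}\chi_{kl}\right]^{\eps}$, $\left[\partial^2_{x_ix_j}\chi_{kl}\right]^{\eps}$ that you obtain from $A\in W^{2,\infty}(\Omega;W^{1,q}(Y))$ and elliptic regularity, after which the two-scale expansion, the cut-off/boundary-corrector argument and the uniform estimate of (i) give the stated bound exactly as in your sketch.
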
 

\begin{proof} 
$(i)$ For $(\Omega,A,f)\in \calH$, one shows similarly to the proof of \cite[Theorem 3]{SS13} and Theorem 2.2 that
\begin{align*}
\|u_{\eps}\|_{H^2(\Omega)}\lesssim \left\| \frac{\mathrm{tr}A\left(\cdot,\frac{\cdot}{\eps}\right)}{\lvert A\left(\cdot,\frac{\cdot}{\eps}\right)\rvert^2}\right\|_{L^{\infty}(\Omega)} \|f\|_{L^2(\Omega)}\lesssim \|f\|_{L^2(\Omega)}.
\end{align*}
For $(\Omega,A,f)\in \calG$, the claim follows from the method of freezing coefficients, using the uniform estimate from Theorem 2.2 for the operators $L_{x_0}:=A\left(x_0,\frac{\cdot}{\eps}\right):D^2$ for fixed $x_0\in \Omega$.\\

$(ii)$ The uniform estimate from $(i)$ yields weak convergence in $H^2(\Omega)$ and strong convergence in $H^1(\Omega)$ for a subsequence of $(u_{\eps})_{\eps>0}$ to some limit function $u_0\in H^2(\Omega)\cap H^1_0(\Omega)$. We multiply \eqref{Nonuniform osc} by $m\left(\cdot,\frac{\cdot}{\eps}\right)$ and follow the transformation performed in \cite{BBM86} to find that
\begin{align*}
m^{\eps}f = 2\,\nabla&\cdot \left(\tilde{A}^{\eps}\nabla u_{\eps}+\left[\div_x \tilde{A}\right]^{\eps}u_{\eps}\right) - 2\left[\div_x \tilde{A}\right]^{\eps}\cdot \nabla u_{\eps} \\&- \left[D_x^2:\tilde{A}\right]^{\eps}u_{\eps} - D^2:\left(\tilde{A}^{\eps}u_{\eps}\right)
\end{align*}
holds weakly, where $\tilde{A}:=Am$ and $v^{\eps}$ denotes $v\left(\cdot,\frac{\cdot}{\eps}\right)$. Passing to the limit, we obtain that $u_0$ is a weak solution of \eqref{nuohomequ}. We conclude the proof by noting that \eqref{nuohomequ} admits a unique strong solution, since $A^0$ is uniformly elliptic and Lipschitz continuous on $\bar{\Omega}$, see \cite{GT01,Gri11}.\\

$(iii)$ This can be proved similarly to Theorem 2.4 and Theorem 2.5, using that, by the assumptions made on $A$ and elliptic regularity, we have 
\begin{align*}
\chi_{kl}^{\eps},\, \left[\partial_{x_i} \chi_{kl}\right]^{\eps},\, \left[\partial_{y_i} \chi_{kl}\right]^{\eps},\, \left[\partial^2_{x_iy_j} \chi_{kl}\right]^{\eps},\,\left[\partial^2_{x_ix_j} \chi_{kl}\right]^{\eps}\in L^{\infty}(\Omega)
\end{align*}
for any $1\leq i,j,k,l\leq n$.
\end{proof}

Let us explain how the numerical scheme from Section 3.1 can be used for the numerical homogenization of \eqref{Nonuniform osc}. 

First, we consider a triangulation $\calT_k$ on $\bar{\Omega}$ consisting of nodes $\{x_i\}_{i\in I}$ with grid size $k>0$, and a triangulation $\calT_h$ on $Y$ with grid size $h>0$. Then, for any $i\in I$, we can use the scheme from Section 3.1 (see Theorem 3.1) to obtain an approximation $m^i_h\in H^1(Y)$ to $m_{x_i}=m(x_i,\cdot)$ such that
\begin{align*}
\|m_{x_i}-m_h^i\|_{L^2(Y)}+h\|m_{x_i}-m_h^i\|_{H^1(Y)}\lesssim h \inf_{\tilde{v}_h\in  \tilde{M}_h} \|m_{x_i}-(\tilde{v}_h+1)\|_{H^1(Y)}.
\end{align*} 
Further, we obtain that
\begin{align*}
A_h^{0,i}:=\int_Y \mathcal{I}_h\left( A(x_i,\cdot)\; m_h^i\right)
\end{align*}
is an approximation to $A^0(x_i)$ (see Lemma 3.1),
\begin{align}\label{l31}
\left\lvert A^0(x_i)-A_h^{0,i}\right\rvert \lesssim h.
\end{align}
Now we define $A_{h,k}^0$ to be a continuous piecewise linear function on $\Omega$ such that
\begin{align*}
A_{h,k}^0(x_i)=A_h^{0,i}.
\end{align*} 
Then, using \eqref{l31} and denoting the continuous piecewise linear interpolant of a function $\phi$ on the grid $\{x_i\}_{i\in I}$ by $\mathcal{I}_k\phi$, we have
\begin{align}\label{l32}
\begin{split}
\|A^0-A_{h,k}^0\|_{L^{\infty}(\Omega)}&\leq \|A^0-\mathcal{I}_k A^0\|_{L^{\infty}(\Omega)} + \| \mathcal{I}_k A^0- A_{h,k}^0\|_{L^{\infty}(\Omega)} \\ &\lesssim\|A^0-\mathcal{I}_k A^0\|_{L^{\infty}(\Omega)} + h.
\end{split} 
\end{align}

We observe that, similarly to the proof of Lemma 3.2, we obtain that the solution $u_0^{h,k}\in H^2(\Omega)\cap H^1_0(\Omega)$ to
\begin{align}\label{l33}
\left\{\begin{aligned}
A_{h,k}^0:D^2 u_0^{h,k} &=f\quad\text{in }\Omega,\\
\hfill u_0^{h,k} &=0\quad \text{on }\partial\Omega,
\end{aligned}\right.
\end{align}
satisfies, for $h,k>0$ sufficiently small,
\begin{align*}
\|u_0-u_0^{h,k}\|_{H^2(\Omega)}\lesssim \|A^0-A_{h,k}^0\|_{L^{\infty}(\Omega)}\|f\|_{L^2(\Omega)},
\end{align*}
and in view of \eqref{l32},
\begin{align*}
\|u_0-u_0^{h,k}\|_{H^2(\Omega)}\lesssim \left( \|A^0-\mathcal{I}_k A^0\|_{L^{\infty}(\Omega)} + h \right)\|f\|_{L^2(\Omega)},
\end{align*}
where $u_0$ is the solution to the homogenized problem \eqref{nuohomequ}. Finally, the solution to \eqref{l33} can be approximated by a standard finite element method on the triangulation $\calT_k$ which yields an approximation $u_{0,h,k}\in H^2(\Omega)\cap H^1_0(\Omega)$ to $u_0$ in the $H^2(\Omega)$-norm.

The approximation of $u_{\eps}$ can be obtained based on the corrector estimate from Theorem 3.6 analogously as in Section 3.3.

\section{Numerical Experiments}

\subsection{Problem with a Known $u_0$}

We consider the homogenization problem
\begin{align}\label{1265}
\left\{\begin{aligned}
A\left(\frac{\cdot}{\eps}\right):D^2 u_{\eps} &= f\quad\text{in }\Omega,\\
\hfill u_{\eps} &= 0\quad\text{on }\partial\Omega,
\end{aligned}\right.
\end{align}
on the domain 
\begin{align*}
\Omega:=Y=(0,1)^2,
\end{align*}
with the matrix-valued map
\begin{align*}
A:\R^2\rightarrow \R^{2\times 2},\quad A(y_1,y_2):=\begin{pmatrix}
1+\arcsin\left(\sin^2(\pi y_1)\right)  & \sin(\pi y_1)\cos(\pi y_1)\\
\sin(\pi y_1)\cos(\pi y_1) & 2+\cos^2(\pi y_1)
\end{pmatrix},
\end{align*}
and the right-hand side $f:\Omega\rightarrow\R$ to be specified below. We observe that the matrix-valued function $A$ satisfies \eqref{Aass} with $q=\infty$. Further, note that 
\begin{align*}
A(y)=(a_{ij}(y_1))_{1\leq i,j\leq 2}
\end{align*}
depends only on the first coordinate of $y=(y_1,y_2)\in\R^2$; see Figure 1.

\begin{figure}[H]
\includegraphics[scale=.8]{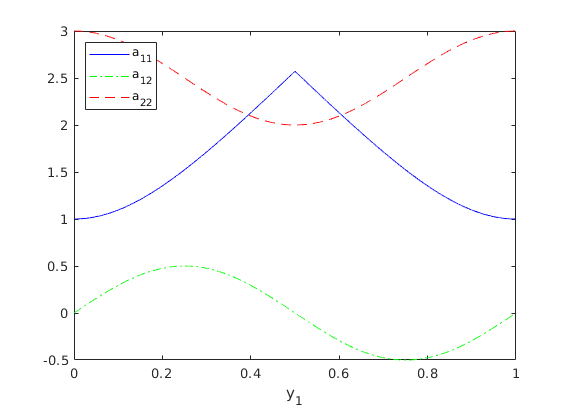}
\caption{The functions $a_{ij}(y_1)$ plotted on the interval $(0,1)$. }
\end{figure}

In this case we know that the homogenized problem is given by
\begin{align}\label{15}
\left\{\begin{aligned}
A^0:D^2 u_{0} &= f\quad\text{in }\Omega,\\
\hfill u_{0} &= 0\quad\text{on }\partial\Omega,
\end{aligned}\right.
\end{align}
where $A^0\in \R^{2\times 2}$ denotes the constant matrix
\begin{align*}
A^0=\int_Y Am
\end{align*}
with $m$ being the invariant measure 
\begin{align*}
m:\R^2\rightarrow\R,\quad m(y_1,y_2)=\left(\int_0^1 \frac{\mathrm{d}t}{a_{11}(t)}  \right)^{-1}\frac{1}{a_{11}(y_1)},
\end{align*}
see \cite{FO09}. Explicit computation yields that
\begin{align*}
a_{11}^0 &= \left(\int_0^1 \frac{\mathrm{d}t}{a_{11}(t)}  \right)^{-1}\hspace{2.3cm}\approx 1.4684,\\
a_{12}^0 &= \left(\int_0^1 \frac{\mathrm{d}t}{a_{11}(t)}  \right)^{-1}\int_0^1 \frac{a_{12}(t)}{a_{11}(t)}\;\mathrm{d}t =0,\\
a_{22}^0 &= \left(\int_0^1 \frac{\mathrm{d}t}{a_{11}(t)}  \right)^{-1}\int_0^1 \frac{a_{22}(t)}{a_{11}(t)}\;\mathrm{d}t \approx 2.6037.
\end{align*}
We consider the right-hand side given by
\begin{align*}
f:\Omega\rightarrow\R,\quad f(x_1,x_2) := a_{22}^0 x_1(x_1-1) + a_{11}^0 x_2(x_2-1).
\end{align*}
Then it is straightforward to check that the exact solution $u_0\in H^2(\Omega)\cap H^1_0(\Omega)$ to the homogenized problem \eqref{15} is given by
\begin{align*}
u_0:\Omega\rightarrow\R,\quad u_0(x_1,x_2) = \frac{1}{2}x_1(x_1-1)x_2(x_2-1).
\end{align*}
Note that we are in the situation $(\Omega,A,f)\in \calH^2$, that $f=0$ in the corners of $\Omega$ and that $u_0\in H^4(\Omega)$.

We use the scheme presented in Section 3.1 to approximate $m$, $A^0$ and $u_0$. We use the same mesh for approximating $m$ and $u_0$. The Hsieh--Clough--Tocher (HCT) element in FreeFem++ is used in the formulation \eqref{q1} for the $H^2$ approximation of $u_0$; see \cite{Hec12}. The gradient on the boundary is set to be the gradient of an $H^1$ approximation by $\mathbb{P}_2$ elements on a fine mesh.

\begin{figure}[H]
\setlength{\abovecaptionskip}{17.5 pt plus 4pt minus 2pt}
\mbox{
\subfigure{\includegraphics[scale=.5]{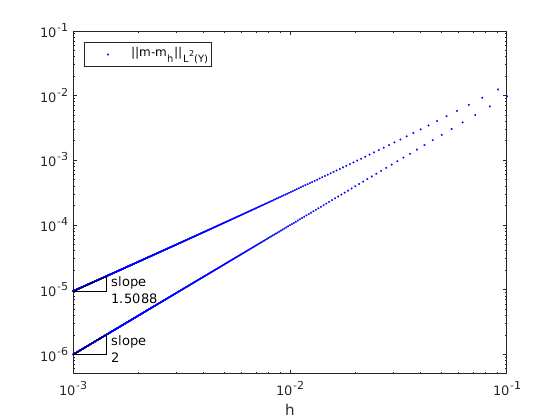}}\,
\subfigure{\includegraphics[scale=.5]{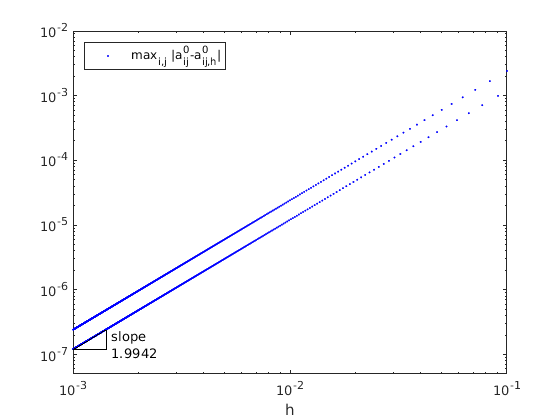}}
}
\vspace{-0.6 cm}
\caption{Approximation error for the invariant measure $m$ (left) and the matrix $A^0$ (right). }
\end{figure}

Figure 2 shows the error in the approximation of $m$ and $A^0$. For the approximation of the invariant measure we observe convergence of order 
\begin{align}\label{order3/2}
\|m-m_h\|_{L^2(Y)} = \mathcal{O}(h^{\frac{3}{2}}),
\end{align}
and superconvergence of order $\mathcal{O}(h^2)$ for $h>0$ when grid points fall on the line $\{y_1=\frac{1}{2}\}$, which is the set along which $\partial_1 m$ possesses a jump. The observed rate of convergence \eqref{order3/2} is consistent with Theorem 3.1. Indeed, we have $m\in H^{\frac{3}{2}-\tilde{\eps}}(Y)$ for any $\tilde{\eps}>0$, and Theorem 3.1 yields
\begin{align*}
\|m-m_h\|_{L^2(Y)}+h\|m-m_h\|_{H^1(Y)}&\lesssim h \inf_{\tilde{v}_h\in  \tilde{M}_h} \|m-(\tilde{v}_h+1)\|_{H^1(Y)}\\
&\lesssim h \left\|m-\mathcal{I}_h m - \int_Y (m- \mathcal{I}_h m)\right\|_{H^1(Y)}
\\&\lesssim h^{\frac{3}{2}-\tilde{\eps}}\|m\|_{H^{\frac{3}{2}-\tilde{\eps}}(Y)} ,
\end{align*}
by making the choice $\tilde{v}_h= \mathcal{I}_h m - \int_Y \mathcal{I}_h m$, and using an interpolation error bound. In connection with the superconvergence we note that $\left. m\right\rvert_{(0,\frac{1}{2})\times (0,1)}\in H^2((0,\frac{1}{2})\times (0,1))$ and $\left. m\right\rvert_{(\frac{1}{2},1)\times (0,1)}\in H^2((\frac{1}{2},1)\times (0,1))$.
 For the approximation of the matrix $A^0$, we observe second-order convergence.

Concerning the approximation of $u_{\eps}$, from Sections 2 and 3.3 we obtain that 
\begin{align*}
\|u_{\eps}-u_0\|_{H^1(\Omega)}+\sum_{k,l=1}^2 \left\|\partial_{kl}^2u_{\eps}-\left(\partial_{kl}^2u_0 + \sum_{i,j=1}^2 \left(\partial_{kl}^2\chi_{ij}\right)\left(\frac{\cdot}{\eps}\right)\partial^2_{ij} u_0\right)\right\|_{L^2(\Omega)}=\mathcal{O}(\sqrt{\eps}),
\end{align*}
where $\chi_{ij}$ ($1\leq i,j\leq 2$) denotes the solution to 
\begin{align*}
\begin{cases}
A:D^2 \chi_{ij} = a_{ij}^0-a_{ij}\quad\text{in }Y,\\
\chi_{ij}\;\text{is $Y$-periodic},\;\int_Y \chi_{ij}=0.
\end{cases}
\end{align*}
Note that since $A$ only depends on $y_1$, we have that
\begin{align*}
\partial_{kl}^2\chi_{ij} (y_1,y_2) = \left\{\begin{aligned}\frac{a_{ij}^0-a_{ij}(y_1)}{a_{11}(y_1)}&\;,\; k=l=1,\\ 0\quad\quad\;\; &\;,\; \text{otherwise.}\end{aligned}\right.
\end{align*}
Therefore, there holds
\begin{align}\label{q10ex}
\begin{split}
\|u_{\eps}&-u_{0}\|_{H^1(\Omega)}^2 +  \left\|\partial_{12}^2u_{\eps}-\partial_{12}^2u_{0} \right\|_{L^2(\Omega)}^2 +\left\|\partial_{22}^2u_{\eps}-\partial_{22}^2u_{0} \right\|_{L^2(\Omega)}^2 \\ &+\left\|\partial_{11}^2u_{\eps}-\left(\partial_{11}^2u_{0} + \sum_{i,j=1}^2 \left(\partial_{11}^2\chi_{ij}\right)\left(\frac{\cdot}{\eps}\right)\partial^2_{ij} u_{0}\right)\right\|_{L^2(\Omega)}^2 =\mathcal{O}(\eps).
\end{split}
\end{align}

For the numerical approximation, we replace $u_{\eps}$ by an $H^2$-conforming finite element approximation on a fine mesh, based on the formulation
\begin{align*}
\text{Find }u_{\eps}\in H:\quad\int_{\Omega}\frac{\mathrm{tr}A\left(\frac{\cdot}{\eps}\right)}{\lvert A\left(\frac{\cdot}{\eps}\right)\rvert^2}\;A\left(\frac{\cdot}{\eps}\right):D^2 u_{\eps}\;\Delta v = \int_{\Omega} \frac{\mathrm{tr}A\left(\frac{\cdot}{\eps}\right)}{\lvert A\left(\frac{\cdot}{\eps}\right)\rvert^2}\;f \Delta v \quad\quad\forall\, v\in H,
\end{align*}
where $H:=H^2(\Omega)\cap H^1_0(\Omega)$. To this end, we use again the HCT element  and set the gradient on the boundary to be the gradient of an $H^1$ approximation by $\mathbb{P}_2$ elements on a fine mesh.

\begin{figure}[H]
\setlength{\abovecaptionskip}{17.5 pt plus 4pt minus 2pt}
\mbox{
\subfigure{\includegraphics[scale=.5]{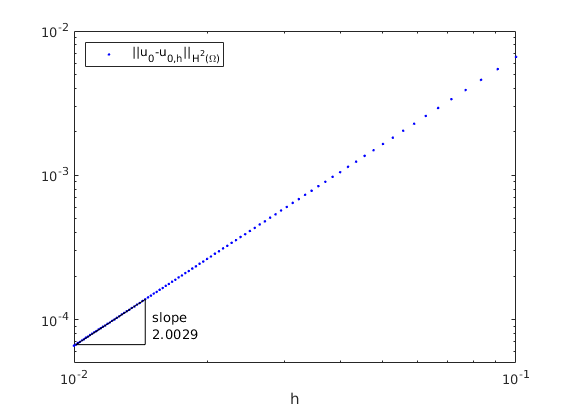}}\,
\subfigure{\includegraphics[scale=.5]{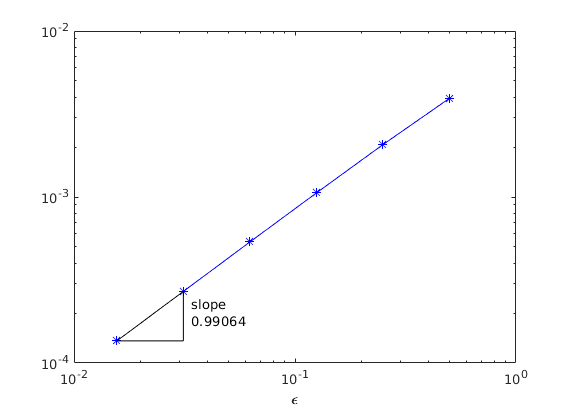}}
}
\vspace{-0.6 cm}
\caption{Approximation error for $u_0$ (left) and the squared error \eqref{q10ex} in the approximation of $u_{\eps}$ for different values of $\eps$ (right). }
\end{figure}

Figure 3 shows the error in the approximation of $u_0$ and we observe second-order convergence. Further, with the exact $u_0$ being available, we can compute the error \eqref{q10ex} for different values of $\eps$; see Figure 3. We observe first-order convergence as $\eps$ tends to zero, as expected from \eqref{q10ex}.

\subsection{Problem with an Unknown $u_0$}

Next, let us consider the problem \eqref{1265} with the same domain $\Omega$ and matrix-valued function $A$ as before, but with the right-hand side given by
\begin{align*}
f:\Omega\rightarrow\R,\quad f(x_1,x_2):=\exp\left( -\frac{1}{\frac{1}{2}-\left(x_1-\frac{1}{2}\right)^2-\left(x_2-\frac{1}{2}\right)^2}\right)  ,
\end{align*}
Note that we are in the situation $(\Omega,A,f)\in \calH^2$. Further, since the right-hand side $f\in H^2(\Omega)$ of the homogenized problem \eqref{15} satisfies $f=0$ at the corners of $\Omega$, the solution $u_0$ to \eqref{15} belongs to the class $H^4(\Omega)$; see \cite[Prop. 2.6]{HOS15}.

As before, we use the scheme presented in Section 3.1 to approximate $m$, $A^0$ and $u_0$. Using the second-order $H^2$ approximation $u_{0,h}$ to $u_0$ obtained as previously described, 
\begin{align*}
\left\| u_0-u_{0,h}\right\|_{H^2(\Omega)} = \mathcal{O}(h^2),
\end{align*}
we have that
\begin{align}\label{q10}
\begin{split}
\|u_{\eps}&-u_{0,h}\|_{H^1(\Omega)}^2 +  \left\|\partial_{12}^2u_{\eps}-\partial_{12}^2u_{0,h} \right\|_{L^2(\Omega)}^2 +\left\|\partial_{22}^2u_{\eps}-\partial_{22}^2u_{0,h} \right\|_{L^2(\Omega)}^2 +\\ &\left\|\partial_{11}^2u_{\eps}-\left(\partial_{11}^2u_{0,h} + \sum_{i,j=1}^2 \left(\partial_{11}^2\chi_{ij}\right)\left(\frac{\cdot}{\eps}\right)\partial^2_{ij} u_{0,h}\right)\right\|_{L^2(\Omega)}^2 =\mathcal{O}(\eps + h^4).
\end{split}
\end{align} 
Figure 4 shows the squared error \eqref{q10} of the approximation of $u_{\eps}$ for different grid sizes and $\eps = \frac{1}{100}$ fixed. We observe fourth-order convergence in $h$ for the squared error as expected from \eqref{q10}.

\begin{figure}[H]
\setlength{\abovecaptionskip}{17.5 pt plus 4pt minus 2pt}
\mbox{
\subfigure{\includegraphics[scale=.5]{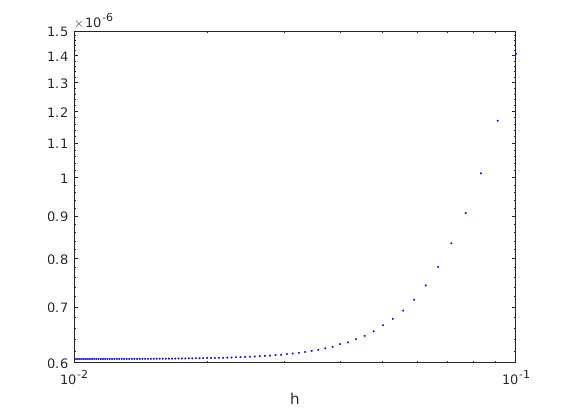}}\,
\subfigure{\includegraphics[scale=.5]{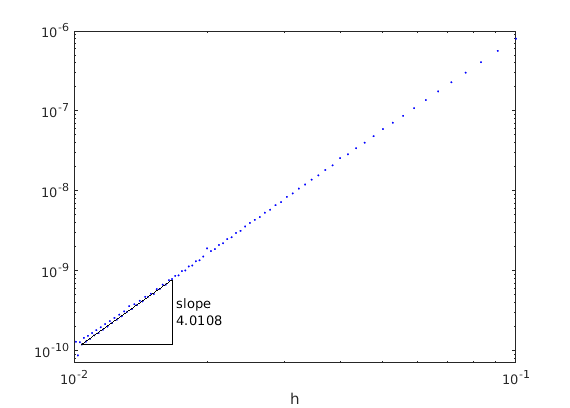}}
}
\vspace{-0.6 cm}
\caption{The squared error \eqref{q10} in the approximation of $u_{\eps}$ for a fixed value, $\eps=\frac{1}{100}$, (left) and the squared error after subtraction of $6.0657\cdot 10^{-7}$ (right), which is approximately the limit of \eqref{q10} in the figure on the left for this fixed value of $\eps$ as $h$ tends to zero.}
\end{figure}

\subsection{Nonuniformly Oscillating Coefficients}

We consider the homogenization problem
\begin{align}\label{NOCproblemnum}
\left\{\begin{aligned}
A\left(\cdot,\frac{\cdot}{\eps}\right):D^2 u_{\eps} &= f\quad\text{in }\Omega,\\
\hfill u_{\eps} &= 0\quad\text{on }\partial\Omega,
\end{aligned}\right.
\end{align}
on the domain 
\begin{align*}
\Omega:=Y=(0,1)^2,
\end{align*}
with the matrix-valued map $A:\Omega\times\R^2\rightarrow \R^{2\times 2}$,
\begin{align*}
(x,y)=((x_1,x_2),(y_1,y_2))\mapsto \begin{pmatrix}
\mathrm{e}^{x_1x_2}+\frac{1}{4}\lvert x\rvert^2 \arcsin\left(\sin^2(\pi y_1)\right)  & 0\\
0 & 2+x_2 \cos(2\pi y_2+x_1)
\end{pmatrix},
\end{align*}
and the right-hand side $f:\Omega\rightarrow\R$ to be specified below. We observe that the matrix-valued function $A$ satisfies \eqref{assnu} with $q=\infty$. Further, note that it is of the form
\begin{align*}
A(x,y)= \mathrm{diag}     \left(a_{11}(x,y_1),a_{22}(x,y_2)\right).
\end{align*}
In this case we know that the homogenized problem is given by
\begin{align}\label{nocexp1}
\left\{\begin{aligned}
A^0:D^2 u_{0} &= f\quad\text{in }\Omega,\\
\hfill u_{0} &= 0\quad\text{on }\partial\Omega,
\end{aligned}\right.
\end{align}
where $A^0:\Omega\rightarrow \R^{2\times 2}$ is given by
\begin{align*}
A^0(x)=\int_Y A(x,\cdot)m(x,\cdot)
\end{align*}
with $m$ being the invariant measure 
\begin{align*}
m:\Omega\times\R^2\rightarrow\R,\quad m(x,y)=\left(\int_0^1 \int_0^1 \frac{\mathrm{d}s\mathrm{d}t}{a_{11}(x,s)a_{22}(x,t)}  \right)^{-1}\frac{1}{a_{11}(x,y_1)a_{22}(x,y_2)};
\end{align*}
see \cite{FO09}. Therefore, we have 
\begin{align*}
a_{ij}^0(x) &= \delta_{ij}\left(\int_0^1 \frac{\mathrm{d}t}{a_{ij}(x,t)}  \right)^{-1},\quad 1\leq i,j\leq 2.
\end{align*}
We consider the right-hand side given by
\begin{align*}
f:\Omega\rightarrow\R,\quad x=(x_1,x_2) \mapsto f(x):=a_{22}^0(x)\, x_1(x_1-1) + a_{11}^0(x)\, x_2(x_2-1).
\end{align*}
Then it is straightforward to check that the exact solution $u_0\in H^2(\Omega)\cap H^1_0(\Omega)$ to the homogenized problem \eqref{nocexp1} is given by
\begin{align*}
u_0:\Omega\rightarrow\R,\quad u_0(x_1,x_2) = \frac{1}{2}x_1(x_1-1)x_2(x_2-1).
\end{align*}
Note that the assumptions of Theorem 3.6 $(iii)$ are satisfied.

For $k>0$ such that $\frac{1}{k}\in \N$, we take a triangulation $\calT_k$ on $\bar{\Omega}$ consisting of nodes $\{(sk,rk)\}_{s,r=0,\dots,1/k}$, and a triangulation $\calT_h$ on $Y$ with grid size $h=\frac{k}{4}$. We use the scheme presented in Section 3.4 to approximate $A^0$ and $u_0$, and we observe second-order convergence; see Figure 5.

For the approximation of $u_{\eps}$, Theorem 3.6 yields

\begin{align*}
\|u_{\eps}-u_0\|_{H^1(\Omega)}+\sum_{k,l=1}^2 \left\|\partial_{kl}^2u_{\eps}-\left(\partial_{kl}^2u_0 + \sum_{i,j=1}^2 \left(\partial_{y_k y_l}^2\chi_{ij}\right)\left(\cdot,\frac{\cdot}{\eps}\right)\partial^2_{ij} u_0\right)\right\|_{L^2(\Omega)}=\mathcal{O}(\sqrt{\eps}),
\end{align*}
where $\chi_{ij}$ ($1\leq i,j\leq 2$) denotes the solution to 
\begin{align*}
\begin{cases}
A(x,y):D_y^2 \chi_{ij}(x,y) = a_{ij}^0(x)-a_{ij}(x,y) ,\quad\,(x,y)\in \Omega\times Y,\\
\chi_{ij}(x,\cdot)\;\text{is $Y$-periodic},\;\int_Y \chi_{ij}(x,\cdot)=0.
\end{cases}
\end{align*}
We observe that we have
\begin{align*}
\partial_{y_k y_l}^2\chi_{ij} (x,y)=\left\{\begin{aligned}\frac{a_{11}^0(x)-a_{11}(x,y_1)}{a_{11}(x,y_1)}&\;,\; i=j=k=l=1,\\ \frac{a_{22}^0(x)-a_{22}(x,y_2)}{a_{22}(x,y_2)}&\;,\; i=j=k=l=2,\\ 0\quad\quad\;\; &\;,\; \text{otherwise.}\end{aligned}\right.
\end{align*}
Therefore, we have that
\begin{align}\label{2liner}
\begin{split}
&\|u_{\eps}-u_0\|_{H^1(\Omega)}^2 + \left\|\partial_{11}^2 u_{\eps}-\left(\partial_{11}^2 u_0 + \left[\partial_{y_1 y_1}^2 \chi_{11}\right]^{\eps}\partial_{11}^2 u_0 \right)  \right\|_{L^2(\Omega)}^2 \\ &+ \|\partial_{12}^2 u_{\eps}-\partial_{12}^2 u_0\|_{L^2(\Omega)}^2 +\left\|\partial_{22}^2 u_{\eps}-\left(\partial_{22}^2 u_0 + \left[\partial_{y_2 y_2}^2 \chi_{22}\right]^{\eps}\partial_{22}^2 u_0 \right)  \right\|_{L^2(\Omega)}^2 = \calO(\eps),
\end{split}
\end{align}
where $\left[\partial_{y_i y_i}^2 \chi_{ii}\right]^{\eps}:= \left(\partial_{y_i y_i}^2 \chi_{ii}\right)\left(\cdot,\frac{\cdot}{\eps}\right) $ for $i\in\{1,2\}$.
For the numerical approximation, we replace $u_{\eps}$ by an $H^2$-conforming finite element method on a fine mesh, based on the formulation
\begin{align*}
\text{Find }u_{\eps}\in H:\quad\int_{\Omega}\frac{\mathrm{tr}A\left(\cdot,\frac{\cdot}{\eps}\right)}{\lvert A\left(\cdot,\frac{\cdot}{\eps}\right)\rvert^2}\;A\left(\cdot,\frac{\cdot}{\eps}\right):D^2 u_{\eps}\;\Delta v = \int_{\Omega} \frac{\mathrm{tr}A\left(\cdot,\frac{\cdot}{\eps}\right)}{\lvert A\left(\cdot,\frac{\cdot}{\eps}\right)\rvert^2}\;f \Delta v \quad\quad\forall\, v\in H,
\end{align*}
where $H:=H^2(\Omega)\cap H^1_0(\Omega)$. To this end, we use again the HCT element  and set the gradient on the boundary to be the gradient of an $H^1$ approximation by $\mathbb{P}_2$ elements on a fine mesh. 

\begin{figure}[H]
\setlength{\abovecaptionskip}{17.5 pt plus 4pt minus 2pt}
\mbox{
\subfigure{\includegraphics[scale=.5]{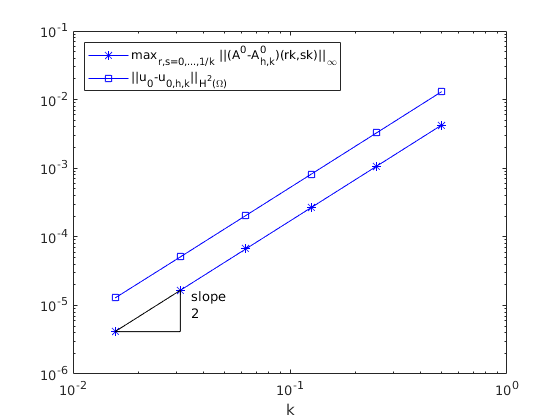}}\,
\subfigure{\includegraphics[scale=.5]{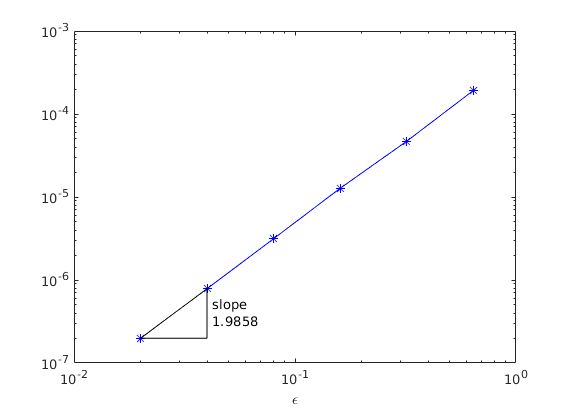}}
}
\vspace{-0.6 cm}
\caption{Approximation error for $A^0$ and $u_0$ for different values of $k$, using $h=\frac{k}{4}$, (left) and the squared error \eqref{2liner} in the approximation of $u_{\eps}$ for different values of $\eps$ (right).}
\end{figure}

Finally, let us consider the problem \eqref{NOCproblemnum} with the same domain $\Omega$ and matrix-valued function $A$ as before, but with the right-hand side given by
\begin{align*}
f:\Omega\rightarrow\R,\quad f(x_1,x_2):=\exp\left( -\frac{1}{\frac{1}{2}-\left(x_1-\frac{1}{2}\right)^2-\left(x_2-\frac{1}{2}\right)^2}\right)  ,
\end{align*}
Note that we are in the situation $(\Omega,A,f)\in \calH$. Further, since the right-hand side $f\in H^2(\Omega)$ of the homogenized problem \eqref{nocexp1} satisfies $f=0$ at the corners of $\Omega$, the solution $u_0$ to \eqref{nocexp1} belongs to the class $H^4(\Omega)$, see \cite[Prop. 2.6]{HOS15}.

Using the second-order $H^2$-conforming approximation $u_{0,k}$ to $u_0$ obtained as previously described (again with $h=\frac{k}{4}$), 
\begin{align*}
\left\| u_0-u_{0,k}\right\|_{H^2(\Omega)} = \mathcal{O}(k^2),
\end{align*}
we have that
\begin{align}\label{q10noc}
\begin{split}
&\|u_{\eps}-u_{0,k}\|_{H^1(\Omega)}^2 + \left\|\partial_{11}^2 u_{\eps}-\partial_{11}^2 u_{0,k} - \left[\partial_{y_1 y_1}^2 \chi_{11}\right]^{\eps}\partial_{11}^2 u_{0,k}   \right\|_{L^2(\Omega)}^2 \\ &+ \|\partial_{12}^2 u_{\eps}-\partial_{12}^2 u_{0,k}\|_{L^2(\Omega)}^2 +\left\|\partial_{22}^2 u_{\eps}-\partial_{22}^2 u_{0,k} - \left[\partial_{y_2 y_2}^2 \chi_{22}\right]^{\eps}\partial_{22}^2 u_{0,k}   \right\|_{L^2(\Omega)}^2 = \calO(\eps+k^4).
\end{split}
\end{align} 
Figure 6 shows the squared error \eqref{q10noc} of the approximation of $u_{\eps}$ for different grid sizes and $\eps = \frac{1}{50}$ fixed. We observe fourth-order convergence in $k$ for the squared error as expected from \eqref{q10noc}.

\begin{figure}[H]
\setlength{\abovecaptionskip}{17.5 pt plus 4pt minus 2pt}
\mbox{
\subfigure{\includegraphics[scale=.5]{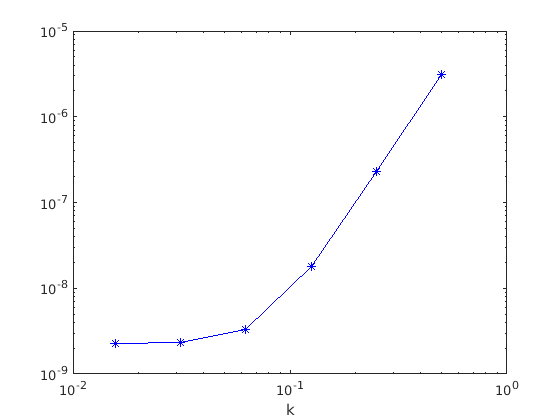}}\,
\subfigure{\includegraphics[scale=.5]{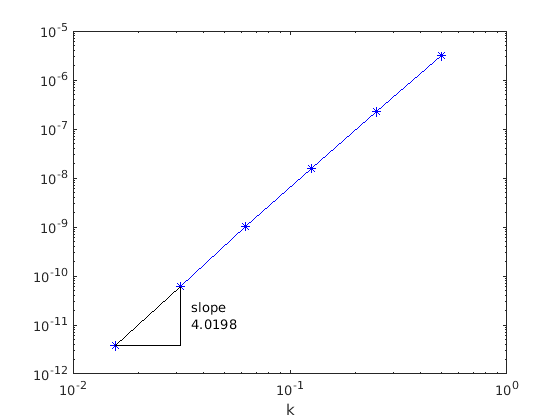}}
}
\vspace{-0.6 cm}
\caption{The squared error \eqref{q10noc} in the approximation of $u_{\eps}$ for a fixed value, $\eps=\frac{1}{50}$, (left) and the squared error after subtraction of $2.2653\cdot 10^{-9}$ (right), which is approximately the limit of \eqref{q10noc} in the figure on the left for this fixed value of $\eps$ as $k$ tends to zero.}
\end{figure}

\section{Conclusion}

In this paper we introduced a scheme for the numerical approximation of elliptic problems in nondivergence-form with rapidly oscillating coefficients on $C^{2,\gamma}$ and polygonal domains, which is based on a $W^{2,p}$ corrector estimate for such problems derived in the first part of this work. 

We proved an optimal-order error bound for a finite element approximation of the corresponding invariant measure using continuous $Y$-periodic piecewise linear basis functions on a shape-regular triangulation of the unit cell $Y$ under weak regularity assumptions on the coefficients. The coefficients are integrated against the so obtained approximation of the invariant measure after piecewise linear interpolation on the mesh to obtain an approximation of the constant coefficient-matrix of the homogenized problem. Using an $H^2$ comparison result for the solution of this perturbed problem, we  eventually obtained an approximation of the solution $u_0$ to the homogenized problem in the $H^2$-norm. In the case of a polygonal domain in two space dimensions, we made use of compatibility conditions for the source term to ensure sufficiently high Sobolev-regularity of $u_0$. 

We obtained an approximation to the solution $u_{\eps}$ of the original problem, i.e., the problem with oscillating coefficients, by making use of the $H^2$ approximation of $u_0$, finite element approximations to second-order derivatives of the corrector functions, as well as an $H^2$ corrector result. A method of successively approximating higher derivatives for the approximation of corrector functions was provided and analyzed. The corrector functions are necessary in order to obtain an approximation of $D^2 u_{\eps}$ whereas the task of approximating $u_{\eps}$ in the $H^1$-norm can be achieved using only an $H^1$ approximation of $u_0$.

Furthermore, we generalized our results to the case of nonuniformly oscillating coefficients, i.e., we derived an analogous corrector result and studied the approximation of the solution $u_0$ to the homogenized problem and the solution $u_{\eps}$ of the $\eps$-dependent problem in this case.

In the final part of the paper, we presented numerical experiments matching the theoretical results for problems with both known and unknown $u_0$, as well as problems with nonuniformly oscillating coefficients. We illustrated the performance of the scheme for the approximation of the invariant measure, the solution $u_0$ to the homogenized problem and the solution $u_{\eps}$ to the problem involving oscillating coefficients for a fixed value of $\eps$.

Future work will focus on weakening of the regularity assumptions on the coefficients and the approximation of fully nonlinear nondivergence-form problems with oscillating coefficients such as the Hamilton--Jacobi--Bellman equation.

\section*{Acknowledgements}

This work was supported by the UK Engineering and Physical Sciences Research Council
[EP/L015811/1].

\bibliographystyle{plain}
\bibliography{ref}

\end{document}